\documentclass[11pt]{article}
\usepackage{graphicx, color, xcolor}
\usepackage{amsmath,amsfonts,amssymb}

\def\forall{\hbox{for all}~}
\def\L{\mathbf{L}}

\def\bfX{{\bf X}}

\def\ve{\varepsilon}

\def\R{{\mathbb R}}%{I\!\!R}

\def\implies{\Longrightarrow}
\def\vp{\varphi}

\def\P{{\cal P}}
\def\tv{\hbox{Tot.Var.}}

\def\v{\vskip 1em}
\def\O{{\cal O}}
\def\M{{\cal M}}
\def\C{{\cal C}}
\def\F{{\cal F}}
\def\S{{\cal S}}
\def\prob{\hbox{Prob.}}

\def\ov{\overline}
\def\Tilde{\widetilde}
\def\Hat{\widehat}
\def\meas{\hbox{meas}}

\def\dint{\int\!\!\int}

\def\bega{\begin{array}}
\def\enda{\end{array}}
\def\begi{\begin{itemize}}
\def\endi{\end{itemize}}
\def\TV{\hbox{TV}}
\def\Var{\hbox{Var}}

\def\ds{\displaystyle}
\def\bel{\begin{equation}\label}
\def\eeq{\end{equation}}
\def\sqr#1#2{\vbox{\hrule height .#2pt
\hbox{\vrule width .#2pt height #1pt \kern #1pt
\vrule width .#2pt}\hrule height .#2pt }}
\def\square{\sqr74}
\def\endproof{\hphantom{MM}\hfill\llap{$\square$}\goodbreak}

\newtheorem{theorem}{Theorem}[section]

\newtheorem{lemma}{Lemma}[section]

\newtheorem{proposition}{Proposition}[section]
\newtheorem{remark}{Remark}[section]
\newtheorem{definition}{Definition}[section]

\definecolor{bluegreen}{rgb}{0.0, 0.87, 0.87}
\definecolor{blush}{rgb}{0.87, 0.36, 0.51}
\definecolor{bittersweet}{rgb}{1.0, 0.44, 0.37}
\definecolor{burgundy}{rgb}{0.5, 0.0, 0.13}
\definecolor{cardinal}{rgb}{0.77, 0.12, 0.23}

\begin{document}

\title{\bf A Two-Fluxes Stochastic Model of Traffic Waves}

\author{Alberto Bressan and Sumantha Kanale Suresha
\\    \, \\
{\small Mathematics Department, Penn State University,} \\
{\small University Park, PA 16802, U.S.A. } \,\\ 
\,\\
{\small E-mails: axb62@psu.edu,~~sfk6107@psu.edu}
}
\maketitle

\begin{abstract}
The paper introduces a stochastic model for the spontaneous formation of 
traffic waves on a highway.   This is formulated in terms of a conservation law with discontinuous, gradient-dependent flux.
In an unstable regime, the non-uniqueness of solutions allows for the emergence of $N$-shaped
spikes in the traffic density, at random points and times.  
Bounds are proved on the expected value of the total variation of the random solution and on the 
expected number of shocks. Further bounds are obtained on the
average velocity and on the expected average acceleration of cars, along a given stretch of highway. Finally, it is proved that the Markov process, whose paths are random solutions to the conservation law,  admits  a unique stationary probability distribution and is ergodic. 
\end{abstract}

\maketitle
%\tableofcontents
%%%%%%%%%%%%%%%%%%%%

{\bf MSC:}   35L65, 35D30, 76A30.

{\bf Key words:}   traffic flow, conservation law, discontinuous flux,
continuous time Markov process, ergodicity.

\section{Introduction}
\label{sec:1}
\setcounter{equation}{0}

The celebrated LWR  model of traffic flow \cite{LW, R} 
\bel{LWR}
\rho_t + \bigl(\rho v(\rho)\bigr)_x~=~0,\eeq
has provided a basic tool for the analysis of vehicular traffic on a highway.
Here $\rho=\rho(t,x)$ is the traffic density, while   $v=v(\rho)$ is the speed of cars. 
The above macroscopic  model is effective in a variety of situations, and has 
inspired a vast literature.
We remark, however, that a single conservation law 
\bel{claw}
u_t + f(u)_x~=~0,\qquad\qquad u(0,x)=\bar u(x)\eeq
cannot account for the spontaneous 
formation of stop-and-go waves.  Indeed, as shown in Fig.~\ref{f:df125}, left,
if the initial condition satisfies
$$ u^-\leq \bar u(x)\leq u^+\qquad\qquad\forall x\in \R,$$
a comparison argument implies 
$$ u^-\leq u(t,x)\leq u^+\qquad\qquad\forall x\in \R, ~t\geq 0.$$
In particular, if $u(0,\cdot)$ is nearly constant, the solution will never develop the large oscillations shown in  Fig.~\ref{f:df125}, right.

\begin{figure}[ht]
\centerline{\hbox{\includegraphics[width=15cm]{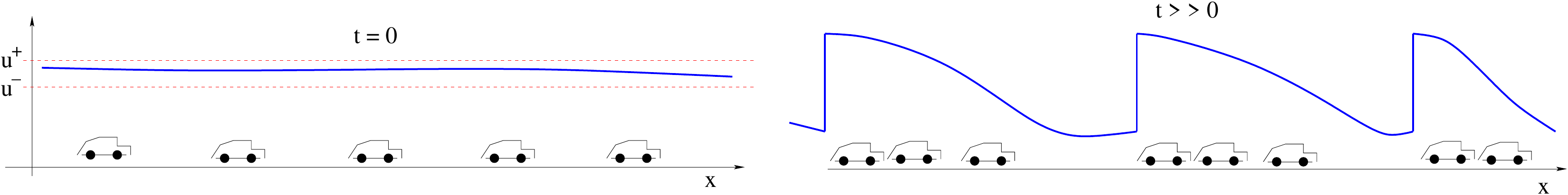}}}
\caption{\small Left: a nearly constant initial density.   Right: the traffic density in the presence of stop-and-go waves.  For a solution to the conservation law (\ref{claw}), by a comparison argument this configuration cannot be reached from nearly constant initial data.}
\label{f:df125}
\end{figure}

Aim of the present paper is to introduce a ``simplest possible" macroscopic model of traffic flow, which accounts for the random creation of stop-and-go waves.

Motivated by the models introduced in \cite{ABS1, ABS2}, we fix $\delta\geq 0$ and 
consider solutions of 
\bel{fgclaw}
u_t + \Big[\theta(u_x) f(u) + \bigl(1-\theta(u_x) \bigr)g(u)\Big]_x=~0,\qquad \quad
\theta(s)~=~\left\{\bega{rl} 1\quad &\hbox{if}\quad s>-\delta,\cr
0\quad &\hbox{if}\quad s<-\delta,\enda\right.\eeq
which are spatially periodic  with period $L$.
In this case,
$u=u(t,x)$ is a function defined for   $t\geq 0$ and $x\in [0,L]$, with 
\bel{uper} u(t,0) ~=~u(t, L)\qquad\qquad\forall t\geq 0.\eeq

On the two fluxes $f,g$ we assume (see Fig.~\ref{f:df131}, right)
\begi
\item[{\bf (A1)}]{\it 
The flux  $g$ is a quadratic polynomial:
\bel{gprop}
g(u)~=~ {\frac{\beta}{2}} u (\alpha-u) ,\eeq
for some constants $\alpha,\beta>0$.
Moreover,  $f$ is a $\C^2$ concave flux such that 
\bel{fprop}\left\{  \bega{l}f(0)\,=\, g(0)\,=\,f(\alpha) \,=\,  g(\alpha)\,=\,0,\\[2mm]
f'(0)=g'(0),\quad f'(\alpha) = g'(\alpha),\enda\right.
\qquad\quad  f(u)>g(u)\quad\forall u\in \, ]0,\alpha[\,.
\eeq
We also assume that, for some $\beta^*>0$, there holds
\bel{fgn}
f''(u)\,\leq-\beta^*\,,\qquad g''(u)\,\leq -\beta^*\qquad\qquad\forall u\in \,]0,\alpha[\,.\eeq
}
\endi 
Notice that, by (\ref{gprop}), we must have $0<\beta^*\leq\beta= - g''(u)$.

We consider a constant initial data, say
\bel{id} u(0,x)~=~ u_0 \in ~]0,\alpha[\qquad\forall x\in [0,L].\eeq

Of course, this Cauchy problem admits the constant solution 
\bel{sol0}u(t,x) = u_0\qquad\qquad\forall (t,x)\in \R_+\times [0,L].\eeq
Since in this case $u_x \equiv 0 > -\delta$, for this particular solution by (\ref{fgclaw}) the flux is $f(u_0)$.
In addition, as observed in \cite{ABS2}, there are infinitely many other solutions where new spikes arise at random points $(t_j, X_j)$ in time and space.  In this paper, we construct a stochastic process by 
introducing a probability measure on the family of all such solutions.

\begin{figure}[ht]
\centerline{\hbox{\includegraphics[width=12cm]{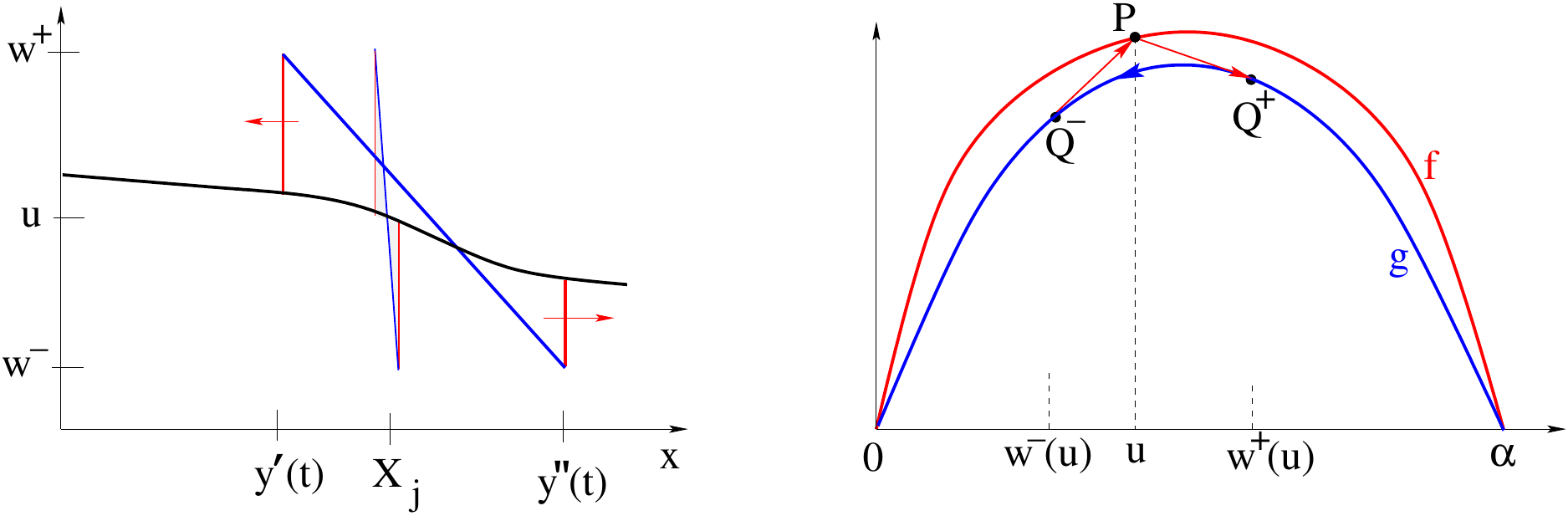}}}
\caption{\small  On a region where the flux is $f$, a new $N$-shaped spike
can arise at any time $t_j$. This consists of a (decreasing)  centered rarefaction wave enclosed between two upward shocks.
For $t>t_j$ the two new shocks, located at $y'(t)<y''(t)$, initially travel with speeds 
given by the slopes of the two secant lines through $P$ which are tangent to the graph of $g$ at $Q^+$, and $Q^-$.  }
\label{f:df131}
\end{figure}

Let \bel{prt} 0~<~t_1~<t_2~<~t_3~<~\cdots\eeq
be a Poisson arrival process with rate $\lambda$.
That means: the random times $t_j$ have independent, identically distributed increments. Setting $t_0=0$,
for every $j\geq 1$ there holds
\bel{P}\hbox{Prob.}\{ t_j- t_{j-1} > s\}~=~e^{-\lambda s}\qquad\qquad\forall s\geq 0.\eeq
In addition, we consider a sequence of independent random variables $X_j$, uniformly distributed over the interval 
$[0,L]$.

Given a sequence $(t_j,X_j)_{j\geq1},$ we construct a solution to the spatially periodic initial-value problem (\ref{fgclaw}), (\ref{id}),  according to the following rules.
\begi
\item[{\bf (I)}] On the first time interval $[0, t_1]$, the solution is constant:
$u(t,x) = u_0$ for all $x\in [0,L]$. 
\item[{\bf (II)}] Assume that the solution has been constructed on $[0, t_j]$.   Given the random point $X_j\in [0,L]$,
the solution is extended to $[0,t_{j+1}]$ as follows:
\begi
\item
If $u_x(t_j, X_j)>-\delta$, so that
$\theta\bigl(u_x(t_j, X_j)\bigr)=1$ and the local flux function  is $f$, then the solution is prolonged for $t>t_j$
by inserting a new $N$-shaped spike at the point $X_j$ at time $t_j$.
This will be supported on an expanding interval
$\bigl[y'(t), y''(t)\bigr]$ (see Fig.~\ref{f:df131}, left).
\item If the gradient $u_x(t_j, X_j)$ is not defined, or if $u_x(t_j, X_j)\leq -\delta$, 
 then the solution is prolonged 
without adding any new spike.
\endi
\endi

Under the assumptions {\bf (A1)}, for every sequence 
$(t_j, X_j)_{j\geq 1}$ we shall determine a unique spatially periodic solution
to the conservation law (\ref{fgclaw}). Having assigned a probability distribution
on the set of all such sequences, we thus obtain a stochastic process whose
random paths $t\mapsto u(t,\cdot)\in BV\bigl([0,L]\bigr)$ are solutions to (\ref{fgclaw}).
Our main goal is to analyze this Markov process and derive some of its basic properties.

The remainder of the paper is organized as follows.  In Section~\ref{sec:2} we recall the definition of weak solution and prove that, for any sequence of random points 
$(t_j, X_j)_{j\geq 1}$, a unique weak solution can be determined, with suitable properties.
In Section~\ref{sec:3} we show that, in the special case where $\delta=0$, as 
considered in 
\cite{ABS2}, all random solutions converge to the constant function $\bar u(x)=u_0$ 
as $t\to +\infty$.   To avoid this trivial dynamics, in all subsequent sections 
we thus assume $\delta>0$.

Section~\ref{sec:4} establishes upper bounds on the expected value and on the variance
of the total variation
of these random solutions.  In Section~\ref{sec:5}, an application of Jensen's
inequality shows that the average speed of cars can never exceed the constant 
$f(u_0)/u_0$.  Moreover, from the analysis of the total variation, 
an upper bound on the expected average acceleration of cars is obtained.
Notably,  all our upper bounds are independent of time.

For subsequent use, in Section~6 we focus on a particular type of solutions.
Starting from any initial data, Lemma~\ref{l:61} shows that with positive probability
after some fixed time $T$ the solution contains exactly one single shock.  Furthermore, the
family of all these 1-shock profiles is a 2-dimensional manifold, i.e., it depends smoothly on just two parameters.

Relying on Lemma~\ref{l:61}, an upper bound on the expected number of shocks in a random solution is then derived in Section~\ref{sec:7}.   We observe that the total number of shocks increases  when a new spike 
appears, but decreases  whenever two shocks merge into each other.

In Section~\ref{sec:8} we prove that our stochastic process
admits a unique stationary probability measure.   Moreover, it is ergodic.
These results are achieved by checking that the conditions 
in \cite{HM, H} are satisfied.  Again, the proof relies in an essential way on 
Lemma~\ref{l:61}. Finally,
in Section~\ref{sec:9} we present the results of some numerical simulations.
Possible extensions and open problems are discussed in Section~\ref{sec:10}.

The modeling and control of stop-and-go traffic waves has been the subject
of an increasing amount of literature, see in particular~\cite{APT, BMPR, CPT, CDGP,
GGLP, Seib, Li, RRS, XXX}.
Our study has been partly motivated by the hysteretic approach of A.\,Corli and H.\,Fan
\cite{CF19, CF23, CF24, CF25, F24}.
For the basic theory of conservation laws we refer to \cite{Bbook, HR}.
A general introduction to stochastic processes, 
with various applications, can be found in \cite{CB}.

\section{A stochastic process with random spikes}
\label{sec:2}
\setcounter{equation}{0}

Following \cite{ABS1, ABS2}, weak solutions to the conservation law with discontinuous, gradient-dependent 
flux are defined as follows.

\begin{definition} \label{def:11} A locally integrable function $u=u(t,x)$ defined on $[0,T]\times\R$ 
is a  {\bf weak solution} to 
the Cauchy problem (\ref{fgclaw}), (\ref{id}) if the following holds.
\begi
\item[(i)] The map $t\mapsto u(t,\cdot)$ is continuous with values in $\L^1_{loc}(\R)$, and satisfies (\ref{id}).
\item[(ii)]  There exists a measurable function $\Theta: [0,T]\times\R\mapsto [0,1]$  such that, at every point $(t,x)$
where the partial derivative $u_x(t,x)$ is well defined, we have
\bel{18} \Theta(t,x)~=~\left\{\bega{rl} 1\quad &\hbox{if}\quad u_x(t,x)>-\delta,\\[1mm]
0\quad &\hbox{if}\quad u_x(t,x)<-\delta.\enda\right.\eeq
Moreover, for every test function $\vp\in \C^1_c\bigl(\,]0,T[\,\times\R)$, one has
\bel{weaksol}
 \dint \left\{ u\,\vp_t + \Big[\Theta \,%(u_x) 
 f(u) + \bigl(1-\Theta \, %(u_x) 
 \bigr)g(u)\Big] \vp_x \right\}\, dx\,dt~=~0.\eeq
 \endi
 \end{definition}
Throughout this paper, we restrict the attention to solutions which are 
spatially periodic as in (\ref{uper}), and
where, at each time $t\geq 0$, the function $u(t,\cdot)$ is piecewise $\C^1$ with upward jumps.

\begin{remark} {\rm  As in the case of a single concave flux, under the assumptions {\bf (A1)} the Lax  condition implies that only upward jumps can be admissible.
Indeed, calling  $u^-,u^+$ the left and right states across a jump, we have:
\begi
\item If the flux is $f(u^-)$ on the left and $g(u^+)$ on the right, then
\bel{Lax1}f'(u^-) ~\geq~ {g(u^+) - f(u^-)\over u^+-u^-} ~\geq~g'(u^+)\eeq
implies $u^-<u^+$.

\item If the flux is $g(u^-)$ on the left of the jump, and $f(u^+)$ on the right, then
\bel{Lax2}g'(u^-) ~\geq~ {f(u^+) - g(u^-)\over u^+-u^-} ~\geq~f'(u^+)\eeq
again implies $u^-<u^+$.
\endi
On the other hand, the condition $u^-<u^+$ does not necessarily imply that the inequalities in 
(\ref{Lax1}) or (\ref{Lax2}) should hold.
}
\end{remark}

As shown in \cite{ABS2}, under the assumptions {\bf (A1)} the Cauchy problem (\ref{fgclaw}), (\ref{id}) 
has
infinitely many weak solutions.  In the present paper, for any given sequence $(t_j, X_j)_{j\geq1}$ 
describing the locations of possible new spikes,
we shall determine a unique corresponding solution.  However, this uniqueness will not come from 
an entropy admissibility condition.  Rather, it will be obtained from an assumption on the structure of the solution itself. For this purpose, we introduce the following set of admissible $L$-periodic functions:

\begin{definition}\label{d:22} We denote by $\S$ the set of all {\bf admissible profiles}. This is the family of all 
 functions $w:[0,L]\mapsto \R$ with the following properties.
\begi
\item[{\bf (P1)}] 
$w(\cdot)$ has
finitely many upward jumps, say  at points  $y_k\in [0, L[\,$, ~$k=1,2,\ldots, N$.
Restricted to each open interval $I_k\doteq\,]y_{k-1}, y_k[\,$
between two jumps, 
the function $w(\cdot)$ is $\C^1$ and non-increasing.  
%Either $w(x)= u_0$, or else $w_x(x)<0$.
\item[{\bf (P2)}] 
$w(0)= w(L)$. Moreover 
\bel{pr1} {1\over L}\int_0^L w(x)\, dx\,=\,u_0\,,
\qquad 0\,<\, \inf_x w(x) \,\leq \,\sup_x w(x)\,<\,\alpha.\eeq
\item[{\bf (P3)}] If $w_x(x)\leq -\delta$ at some point $x\in I_k$, then $w_{x}$ is constant on $I_k$.
\endi
\end{definition}

In the following, we consider a strictly increasing sequence of times 
\bel{tseq} 0\,<\, t_1\,<\, t_2\,<~\cdots\qquad\qquad \lim_{j\to\infty} t_j~=~+\infty,\eeq
and a sequence of points $X_j\in [0, L]$.

\begin{theorem}\label{t:21} Assume that the flux functions $f,g$ satisfy {\bf (A1)}. 
Then, for any  sequence of times $t_j$ as in (\ref{tseq})  and points $X_j\in [0,L]$, 
there exists a unique solution $u=u(t,x)$ of (\ref{fgclaw})-(\ref{uper}), (\ref{id}),
which satisfies the inductive rules {\bf (I)-(II)} 
together with the structural condition $u(t,\cdot)\in \S$ for all $t\geq 0$. \end{theorem}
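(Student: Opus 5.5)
Since only finitely many $t_j$ fall in any bounded interval, the plan is to construct the solution by induction on the intervals $[t_j,t_{j+1}]$. On each interval we prove local existence and uniqueness of a solution that stays in $\S$, with finitely many wave interactions; the solution on $[0,+\infty[$ is then the concatenation. Step {\bf (I)} is trivial: the constant $u_0$ lies in $\S$, and uniqueness there is part of the rule. Everything therefore reduces to the following: given a profile $w\in\S$ at time $\tau=t_j$, possibly modified by inserting a spike at $X_j$, show there is a unique solution on $[\tau,t_{j+1}]$ with $u(t,\cdot)\in\S$.

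\textbf{Spike insertion.} If $w$ is $\C^1$ near $X_j$ with $w_x(X_j)>-\delta$, the flux there is $f$. We open a centered decreasing rarefaction at $X_j$. Take $P=(w(X_j),f(w(X_j)))$ and the two tangent lines from $P$ to the graph of $g$, touching at $Q^\pm$. The profile for $t>\tau$ near $X_j$ is a self-similar $g$-rarefaction $x-X_j=g'(u)(t-\tau)$ for $u$ between $u^-_Q$ and $u^+_Q$, bounded by two upward shocks $y',y''$ joining the $f$-states to the $g$-fan. Because the tangency gives equality in (\ref{Lax1})--(\ref{Lax2}), the shock speeds coincide with the characteristic speeds of $g$ at $Q^\pm$. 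Since $g$ is quadratic, $g'$ is affine, so the fan has constant slope $u_x=1/(g''\,(t-\tau))=-1/(\beta(t-\tau))$. Hence $u_x\le-\delta$ immediately after insertion, with $u_x$ constant on the fan interval, which is exactly (P3). This is why (A1) requires $g$ quadratic. Uniqueness of the insertion follows because (P3) forces an affine profile on the $g$-region, and the Rankine--Hugoniot relations at $y',y''$ together with the requirement that the $g$-region have constant slope fix the construction.

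\textbf{Evolution between insertions.} Away from spikes we solve with flux $f$ on intervals where $w_x>-\delta$, and with flux $g$ on the affine intervals where $w_x\le-\delta$. For the quadratic $g$, an affine decreasing profile stays affine, and its slope $s(t)$ solves $\dot s=\beta s^2$ with $s<0$, so $s$ increases toward $0$. Hence $|s|$ decreases, and we must prescribe what happens when $s$ reaches $-\delta$. We show this cannot create an ambiguity: at that moment the region switches to flux $f$, and the solution continues with $f$, since a $g$ region with $s>-\delta$ would violate (\ref{18}). On $f$-regions we use classical theory for a concave flux: a non-increasing $\C^1$ profile (P1) generates no new shocks in positive time, because characteristics spread, and remains $\C^1$ and non-increasing. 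The slope $w_x$ may still drop below $-\delta$ only if it is already $\le-\delta$ there; since $f$-regions start with $w_x>-\delta$ and spreading makes $|w_x|$ decrease, this does not happen. Shocks move by Rankine--Hugoniot with the appropriate pair of fluxes among $f,g$. Mass conservation gives the average in (P2), and the bounds $0<w<\alpha$ are preserved because new values lie in the range of the Riemann fan between existing values. Interactions are shocks merging, or shocks swallowing an $f$- or $g$-region. Each is resolved uniquely, yields again a single upward shock, and reduces the number of waves, so there are only finitely many interactions on a bounded interval.

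\textbf{Main obstacle.} I expect the hardest step to be uniqueness: showing that no other weak solution in $\S$ exists between the $t_j$, for example one that spontaneously opens additional fans or uses $g$ on regions where $u_x>-\delta$. The plan is to argue that, by (P1) and (P3), any solution in $\S$ is piecewise either a classical $f$-solution or an affine $g$-solution. On each such piece, Kruzhkov-type $L^1$ uniqueness for a single concave flux, applied to solutions with only upward jumps, gives uniqueness. The shock curves are then determined by ODEs $\dot y=[\text{flux}]/[u]$ with Lipschitz right-hand sides. Finally, a new affine $g$-region could only arise from a centered fan, which steps (II) permit only at the points $(t_j,X_j)$.
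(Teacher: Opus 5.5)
Your inductive scheme agrees with the paper: switching $g\to f$ on a whole spike interval when the slope $-1/(\beta(t-t_i))$ reaches $-\delta$, merging colliding shocks, and finitely many events per interval. The gap is in the step that carries the real content, the insertion of a spike, which is not proved.

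You describe the spike as a $g$-fan bounded by straight shocks moving with speeds $g'(w^\pm)$. That holds only if the background is constant near $X_j$, e.g.\ for the first spike. In general the background $\omega$ is a non-constant $f$-solution with $-\delta<\omega_x<0$. The outer states $\omega(t,y''(t))$, $\omega(t,y'(t))$ then vary, the shocks are curved, and $y''$ must be obtained from
\[
\dot y''(t)~=~\phi\Bigl(\omega\bigl(t,y''(t)\bigr),~{\alpha\over 2}-{y''(t)-X_j\over \beta(t-t_j)}\Bigr),\qquad y''(t_j)=X_j,
\]
and similarly for $y'$. The right-hand side depends on $y''$ through the fan state, with Lipschitz constant $|\partial_{u^-}\phi|/(\beta(t-t_j))$, which is not integrable at $t=t_j$. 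So Cauchy--Lipschitz gives neither existence of a shock issuing from $(t_j,X_j)$ with the right asymptotics nor its uniqueness. Your later claim that the shock curves solve ODEs with Lipschitz right-hand sides fails exactly here. Likewise, ``the Rankine--Hugoniot relations fix the construction'' is precisely what has to be shown.

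The missing idea (Lemma~\ref{l:21}) is that the tangency condition defining $w^-$ forces $\partial_{u^-}\phi=0$ at $(\omega_0,w^-)$. Hence $\partial_{u^-}\phi=\O(1)\cdot\bigl(|u^+-\omega_0|+|u^--w^-|\bigr)$. This smallness compensates the factor $1/(t-t_j)$ and makes the Picard map a contraction in the weighted distance $\sup_t|z_1(t)-z_2(t)|/(t-t_j)$, on curves with $|\dot z-g'(w^-)|\le\ve_1$. Equivalently, set $s=(y''-X_j)/(t-t_j)$. Then one gets the singular equation $(t-t_j)\,\dot s=-\bigl(s-g'(w^-)\bigr)+\O(1)\cdot\bigl((t-t_j)+|s-g'(w^-)|^2\bigr)$. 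The negative linear coefficient, produced by the tangency, is what singles out a unique solution with $s\to g'(w^-)$.

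A second gap concerns the bounds $0<u<\alpha$ in (P2). Your argument that new values stay between existing ones assumes characteristics always enter the shocks. That is false at a shock with $g$ on the fan side: at $y''$ one has $\dot y''>g'(u^-)$ whenever $w^-(u^+)<u^-<u^+$. So fan characteristics emanate from the shock, and the shock uncovers further portions of the affine profile ${\alpha\over 2}-{x-X_i\over\beta(t-t_i)}$. As a result, $u^-(t)$ is not confined to the initial range $[w^-,w^+]$.

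One must therefore rule out $u^-(t)\downarrow 0$ at $y''$ and $u^+(t)\uparrow\alpha$ at $y'$. The paper's proof does this by differentiating $u^-$ along the shock, as in (\ref{du0}), and using $g'(0)=f'(0)>f(u^+)/u^+$, so that $\frac{d}{dt}u^->0$ near $u^-=0$.

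Finally, Kruzhkov-type uniqueness on the separate pieces is not directly available. The pieces are bounded by free $f/g$ shocks that need not be entropy admissible. In the paper, uniqueness comes from the method of characteristics combined with uniqueness for the Rankine--Hugoniot ODEs: regular away from spike births, and via the contraction above at each birth.
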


The key step in the proof relies on the following lemma,  describing the formation of a new
$N$-shaped spike (see Fig.~\ref{f:df131}, left).

\begin{lemma}\label{l:21} Assume that the fluxes $f,g$ satisfy  {\bf (A1)}. Let $\omega=\omega(t,x)$ be a $\C^1$ solution of the conservation law (\ref{claw})
with flux $f$, defined for $(t,x)$ in a neighborhood of the origin. Assume $\omega_x(t,x)>-\delta$ and call $\omega_0\doteq\omega(0,0)\in \,]0,\alpha[\,$.

Then there exists $\tau>0$ sufficiently small, and unique shock curves $y'(t)< y''(t)$, $t\in [0, \tau]$, so that 
the following holds.
\begi
\item[(i)] For $t\in \,]0,\tau]$, the piecewise $\C^1$ function
\bel{spk}u(t,x)~=~\left\{ \bega{cl} \omega(t,x)\quad &\hbox{if}\quad x\notin \bigl[y'(t),\, y''(t)\bigr],\\[2mm]
\ds {\alpha\over 2} - {x\over\beta t} \quad &\hbox{if}\quad y'(t)<x< y''(t),\enda\right.\eeq
is a weak solution to the conservation law with two fluxes (\ref{fgclaw}).
\item[(ii)] The initial speeds of the two shocks satisfy
\bel{sspeed}\bega{l}\ds
\lim_{t\to 0+}{d\over dt} y'(t)~=~{g\bigl(w^+(\omega_0)\bigr) - f(\omega_0)\over 
w^+(\omega_0)-\omega_0}~=~g'\bigl(w^+(\omega_0)\bigr) ,\\[4mm]
\ds
 \lim_{t\to 0+}{d\over dt} y''(t)~=~{f(\omega_0) -g\bigl(w^-(\omega_0)\bigr) \over \omega_0-
w^-(\omega_0)}~=~ g'\bigl(w^-(\omega_0)\bigr)   .\enda\eeq
Here $Q^\pm\doteq \Big( w^\pm(\omega_0), g\bigl(w^\pm(\omega_0)\bigr)\Big)$ are the points where the lines through the point $P=\bigl( \omega_0, f(\omega_0)\bigr)$ meet the graph of $g$ tangentially
 (see Fig.~\ref{f:df131}, right, where $u=\omega_0$).
\endi
\end{lemma}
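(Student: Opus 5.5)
The plan is to write down the ODE system for the two shock curves, solve it locally by a fixed-point argument adapted to the singularity at $t=0$, and then check the weak formulation piece by piece.

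\textbf{Step 1: the middle profile.} The function $v(t,x)=\frac{\alpha}{2}-\frac{x}{\beta t}$ is the centered rarefaction for the flux $g$. Indeed, $g'(v)=\frac{\beta}{2}(\alpha-2v)=x/t$, so $v$ solves $v_t+g(v)_x=0$ classically for $t>0$. Its slope is $v_x=-1/(\beta t)$, which is $<-\delta$ for $t$ small, so $\Theta=0$ inside $[y',y'']$ is consistent with (\ref{18}). Outside, $\omega_x>-\delta$ gives $\Theta=1$, and $\omega$ solves the $f$-equation.

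\textbf{Step 2: reduction to the shock equations.} Since $u$ is piecewise $\C^1$, the weak formulation (\ref{weaksol}) is equivalent to the Rankine--Hugoniot conditions on the two curves:
$$\dot y'=\frac{g(v(t,y'))-f(\omega(t,y'))}{v(t,y')-\omega(t,y')},\qquad \dot y''=\frac{f(\omega(t,y''))-g(v(t,y''))}{\omega(t,y'')-v(t,y'')}.$$
Near the origin $\omega$ is almost constant. Hence the jump at $y'$ must be upward, from $\omega$ to $v$, and the jump at $y''$ downward in the sense that $v<\omega$ there; the profile is $N$-shaped. Both curves must start at $0$.

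\textbf{Step 3: rescaling to remove the singularity.} Write $y'(t)=t\,a(t)$ and $y''(t)=t\,b(t)$, so that $v(t,y')=\frac{\alpha}{2}-\frac{a}{\beta}$ is a function of $a$ alone; call it $V(a)$. Then $\dot y'=a+t\dot a$, and the first ODE becomes
$$t\,\dot a=\Phi(a,t)\doteq\frac{g(V(a))-f(\omega(t,ta))}{V(a)-\omega(t,ta)}-a.$$
At $t=0$ the equation $\Phi(a,0)=0$ says that the secant slope from $P=(\omega_0,f(\omega_0))$ to $(V,g(V))$ equals $a=g'(V)$, i.e. the secant is tangent to the graph of $g$. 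Because $f>g$ and $g$ is strictly concave, there are exactly two tangent lines from $P$, touching at $Q^\pm$. For $y'$ we choose the root $V=w^+(\omega_0)>\omega_0$, and for $y''$ the root $w^-(\omega_0)<\omega_0$; this yields (\ref{sspeed}) once the limits are justified. Uniqueness at $t=0$ follows from this geometry.

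\textbf{Step 4: solving the singular ODE (main obstacle).} The equation $t\dot a=\Phi(a,t)$ is a Briot--Bouquet type singular ODE, and I expect this step to be the hardest. At the root $a_0$ I will compute $\partial_a\Phi(a_0,0)$. Differentiating the secant slope with respect to $V$ at a tangency point gives zero by the tangency identity, so
$$\partial_a\Phi=-1+0\cdot V'(a)=-1<0.$$
A negative eigenvalue is exactly the case in which the singular ODE has a unique solution continuous at $t=0$ with $a(0)=a_0$ (the Briot--Bouquet theorem). Concretely, I will write
$$a(t)=a_0+t^{-1}\int_0^t\bigl[\Phi(a(s),s)+(a(s)-a_0)\bigr]\,ds$$
and apply a contraction argument on a small ball in $\C([0,\tau])$. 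Since the bracket equals $O(|a-a_0|^2)+O(s)$, the map is a contraction for $\tau$ small. The key input is the nondegeneracy $V-\omega_0\neq0$ at the roots, which holds because $f>g$ on $]0,\alpha[$. Uniqueness of the shock curves then follows: any admissible pair of curves emanating from the origin must have $y/t$ converging to a tangency root, hence lies in the contraction ball. Finally, $\dot y'=a+t\dot a\to a_0$ gives (ii), and $y'<0<y''$ because $g'(w^+)<g'(w^-)$.
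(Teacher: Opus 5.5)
Your argument is essentially the paper's proof. Substituting $z=ta$, your integral equation $a=a_0+t^{-1}\int_0^t[\Phi+(a-a_0)]\,ds$ becomes exactly the paper's Picard map $z(t)=\int_0^t\phi\bigl(\omega(s,z(s)),\,\frac{\alpha}{2}-\frac{z(s)}{\beta s}\bigr)\,ds$. Your sup norm on $a$ is the paper's metric $\sup_t|z_1-z_2|/t$. Your eigenvalue $-1$ encodes the same key fact the paper uses: the derivative of the Rankine--Hugoniot speed with respect to the $g$-side state vanishes at the tangency point.

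Two small corrections:

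\begin{itemize}
\item Like the paper's, your contraction only gives uniqueness within the ball around $a_0$. Your claim that every admissible pair of curves has $y/t$ converging to a tangency root is asserted without proof, so it should be argued or dropped.
\item The final conclusion should be $y'<y''$, not $y'<0<y''$. The values $g'(w^+)$ and $g'(w^-)$ can have the same sign, so both shocks may move in the same direction.
\end{itemize}
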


{\bf Proof.}  {\bf 1.} By the assumptions {\bf (A1)},
there exist
unique states $w^\pm=w^\pm (\omega_0)$, with 
\bel{wpm}
w^-\,<\,\omega_0\,<\,w^+,
\eeq
such that the two lines from $P$ to the points $Q^\pm$
are tangent to the graph of $g$. This means (see Fig.~\ref{f:df131}, right, with $u=\omega_0$)
\bel{w+-}
g'(w^+)\,=\,\frac{g(w^+)-f(\omega_0)}{w^+-\omega_0},\qquad
\qquad
g'(w^-)\,=\,\frac{f(\omega_0)-g(w^-)}{\omega_0-w^-}.
\eeq
\v
{\bf 2.} By (\ref{spk}),  for $x\notin \bigl[y'(t),\, y''(t)\bigr]$ the composite function
$u$ is a solution to (\ref{fgclaw}) with flux $f$ and gradient $u_x>-\delta$.
Moreover, for $y'(t)<x< y''(t)$,  the function $u$ satisfies the identity
$$g'\bigl(u(t,x)\bigr) ~=~{\alpha\beta\over 2} -\beta\left( {\alpha\over 2} - {x\over \beta t}\right)~=~{x\over t}\,,$$
hence it is a centered rarefaction fan corresponding to the flux $g$.  For $t>0$ small, its gradient satisfies
$$u_x(t,x)~=\,-{1\over\beta t}~ <\, -\delta.$$
To prove the lemma, it remains to show that the shock curves $y'(t)<y''(t)$ are uniquely determined by the Rankine-Hugoniot conditions.  This fact is not straightforward, because in the  region
between the two shocks the function $u$ at (\ref{spk}) is not Lipschitz continuous when $t\to 0$.
A more careful argument is needed.
\v
{\bf 3.} 
To fix ideas, consider the second shock $y''(\cdot)$ with left and right states
\bel{u-} u^-(t)~\doteq~{\alpha\over 2} - {y''(t) \over \beta t},\qquad
\qquad   u^+(t)~=~\omega\bigl(t, y''(t) \bigr).\eeq
The Rankine-Hugoniot equations yield
\bel{RHfg}
{d\over dt} y''(t)~=~{f\bigl(u^+(t)\bigr) - g\bigl(u^-(t)\bigr)\over u^+(t)- u^-(t)}\,.\eeq
A local solution will be obtained as a fixed point of the Picard operator
\bel{Pic}
(\P z)(t)~=~\int_0^t 
\phi\left(\omega\bigl( s, z(s)\bigr)\,, ~{\alpha\over 2} - {z(s)\over \beta s}\right)\, ds ,
\qquad\mbox{where}\quad
\phi(u^+,u^-) \,\dot=\, \frac{f(u^+)-g(u^-)}{u^+-u^-}. 
\eeq
We will show that, for $t>0$ small, one has
\bel{appr}
u^-(t)\,\approx\, w^-(\omega_0),\qquad\qquad u^+(t)\,\approx\, \omega_0,\eeq
with $w^-<\omega_0$ as in (\ref{wpm}).
Hence the difference $u^+-u^-$ remains uniformly positive, and
 the integrand  $\phi=\phi(u^+, u^-)$ is a $\C^1$ function of its arguments.

Using the second tangency condition in (\ref{w+-}),
we obtain  a sharper bound on the derivative of the shock speed w.r.t.~$u^-$.
Namely
\bel{dumin}\bega{rl}\ds
{\partial\over\partial u^-}\phi(u^+,u^-) &\ds=~\frac{-g'(u^-)(u^+-u^-) + \bigl[ f(u^+)-g(u^-)\bigr] }{(u^+-u^-)^2}\\[4mm]
&=~\O(1)\cdot\Big( |u^+-\omega_0| + | u^- - w^-| \Big).\enda \eeq
Indeed, when $u^+=\omega_0$ and $u^-= w^-$, by (\ref{w+-}) the numerator of 
(\ref{dumin}) vanishes.    The conclusion thus follows from the $\C^1$ regularity of $\phi$.
Here and throughout the sequel, the Landau symbol $\O(1)$ denotes a uniformly bounded quantity.
\v
{\bf 4.}
In order to apply a fixed point argument,  recalling (\ref{w+-}) we fix $\ve_1>0$ sufficiently small and consider the family of Lipschitz functions 
\bel{Fdef}\F~\doteq~\Big\{ z\in W^{1,\infty}\bigl([0,t_0]\bigr);\qquad z(0)=0,
\qquad \bigl|\dot z(t)-g'(w^-)\bigr|\leq \ve_1~~\hbox{for a.e.}~
t\Big\}.\eeq
Here and in the sequel, the upper dot denotes a derivative w.r.t.~time.

We claim that, for $\ve_1, t_0>0$ sufficiently small, the Picard map (\ref{Pic}) maps
$\F$ into itself.

Indeed, since $\phi(\omega_0, w^-)~=~g'(w^-)$, by (\ref{Fdef}) it follows
\bel{er1}
\bigl|\phi(u^+, u^-) - g'(w^-)\bigr|~=~\O(1)\cdot\bigl( |u^+-\omega_0| + |u^--w^-|^2\bigr).\eeq
If now $z(\cdot)\in \F$, then 
\bel{umn}\bigl|u^+(t)-\omega_0\bigr|\,=\,\O(1)\cdot t,\qquad\qquad  |u^-(t)-w^-|\,=\,\O(1)\cdot \ve_1\,.\eeq
Therefore, by first choosing $\ve_1>0$ suitably small and  then a time $t_0>0$ small enough,  for all 
$t\in [0, t_0]$ we obtain
\bel{PZB}\Bigl|\phi\bigl(u^+(t), u^-(t)\bigr) - g'(w^-)\Big|~=~\O(1)\cdot \bigl(t_0 + \ve_1^2)~<~\ve_1\,,\eeq
proving that the Picard iterate satisfies $\P z\in \F$.
\v
{\bf 5.} To achieve a contractive property, 
on $\F$ we consider the distance
$$d(z_1,z_2)~\doteq~\sup_{0<t\leq t_0} \left| \frac{z_1(t)-z_2(t)}{ t}\right|.$$
Let $z_1,z_2$ be two such Lipschitz functions with  $d(z_1,z_2) =\rho$,
so that 
\bel{yz} 
\bigl|z_1(t)-z_2(t)\bigr|~\leq~\rho t\qquad\qquad\forall t\in [0,t_0].
\eeq

To estimate the distance $\Big| (\P z_1)(t)- (\P z_2)(t)\Big|$, for clarity we estimate separately 
the terms arising from the change in $u^+$ and the terms arising from the change in $u^-$. 
We thus obtain
\bel{Pic2}\bega{l}\bigl|
(\P z_1)(t)- (\P z_2)(t)\bigr|\\[4mm]
\ds\qquad =~\int_0^t \bigg|
\phi\left(\omega\bigl( s, z_1(s)\bigr)\,, ~{\alpha\over 2} - {z_1(s)\over \beta s}\right)
-\phi\left(\omega\bigl( s, z_2(s)\bigr)\,, ~{\alpha\over 2} - {z_1(s)\over \beta s}\right)\bigg|
 ds\\[4mm]
 \ds \qquad \quad +\int_0^t \bigg|
\phi\left(\omega\bigl( s, z_2(s)\bigr)\,, ~{\alpha\over 2} - {z_1(s)\over \beta s}\right)
-\phi\left(\omega\bigl( s, z_2(s)\bigr)\,, ~{\alpha\over 2} - {z_2(s)\over \beta s}\right)\bigg|
 ds\\[4mm]
 \qquad \doteq~A(t)+B(t).
 \enda
 \eeq
Since $\omega$ is a $\C^1$ function of its arguments, we have
\bel{Abo}A(t)~=~\O(1)\cdot \int_0^t\bigl| z_1(s)-z_2(s)\bigr|\, ds~=~\O(1)\cdot \int_0^t \rho s\, ds~=~
\O(1) \cdot {\rho t^2\over 2}~\leq~{\rho\over 4}\,t\,,\eeq
provided that $t\in [0,t_0]$ is sufficiently small.

Next,
using the tangency condition (\ref{dumin}) one obtains
\bel{Bbo}\bega{rl}B(t)&\ds=~\O(1)\cdot \int_0^t{\bigl| z_1(s)-z_2(s)\bigr|\over \beta s}\cdot 
\Big(\bigl| u^+(s)- \omega_0\bigr|+ \bigl| u^-(s) - w^-\bigr|\Big)\, ds\\[4mm]
&\ds=~\O(1)\cdot \int_0^t{\bigl| z_1(s)-z_2(s)\bigr|\over \beta s}\cdot \sum_{i=1,2}
\Big(\bigl| u_i^+(s)- \omega_0\bigr|+ \bigl| u_i^-(s) - w^-\bigr|\Big)\, ds\\[4mm]
&\ds \leq ~ \O(1)\cdot \int_0^t {\rho\over\beta} \cdot  \Big[\O(1)\cdot (t_0+\ve_1)\Big]\, ds
~\leq~{\rho\over 4}\, t,\enda
\eeq
provided that $\ve_1, t_0>0$ are chosen  suitably small.

Inserting (\ref{Abo})-(\ref{Bbo}) in (\ref{Pic2}), we conclude that the Picard operator is a strict contraction, hence it admits a unique fixed point $y''(\cdot)\in \F$, defined for $t\in [0,t_0]$.

{}From (\ref{RHfg}), letting $t\to 0+$ one obtains the second limit in (\ref{sspeed}).

An entirely similar argument applies to the first shock curve $y'(\cdot)$, completing the proof.
\endproof

\begin{figure}[ht]
\centerline{\hbox{\includegraphics[width=12cm]{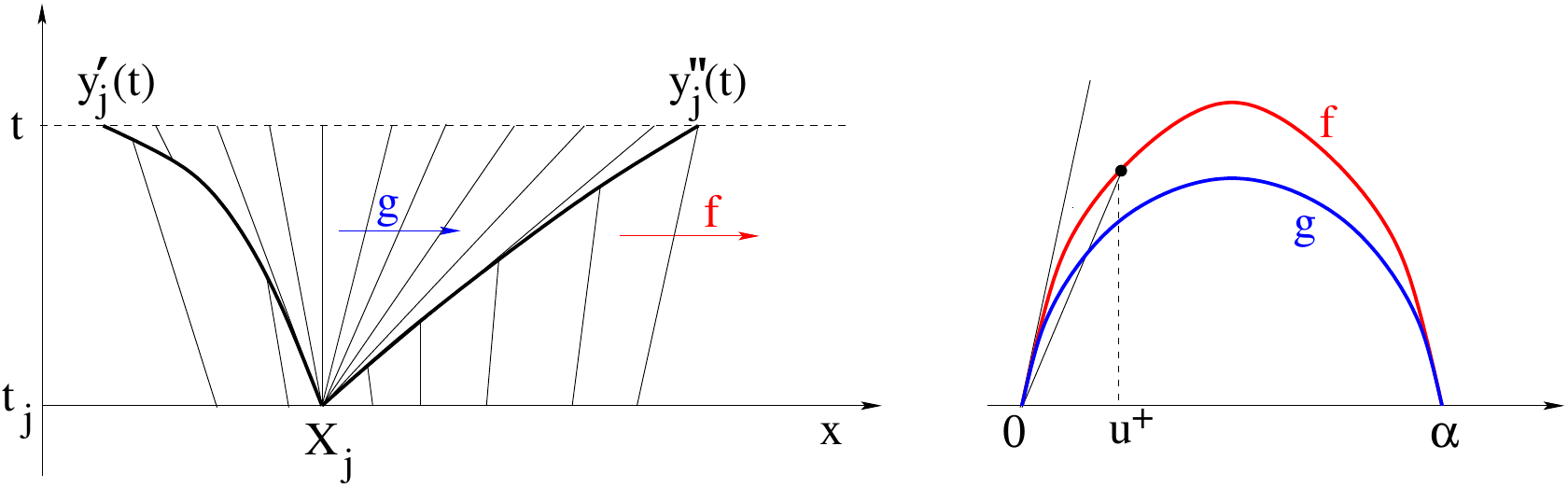}}}
\caption{\small Left: the new shocks $y_j'(t)$, $y_j''(t)$ generated at the  point $(t_j, X_j)$.
Note that the characteristics always flow in toward $y_j''(t)$ from the right. However, they may 
flow outward on the left.   Right: the assumptions $g'(0)=f'(0)$ and $f''(u)<0$ imply 
that $g'(0)> f(u^+)/u^+$ for every $u^+\in \,]0,\alpha[\,$.}
\label{f:df133}
\end{figure}

Relying on Lemma~\ref{l:21} we can now prove the main theorem.
\v
{\bf Proof of Theorem~\ref{t:21}.}

{\bf 1.} During the initial interval $[0, t_1]$, the solution is constant: $u(t,x)=u_0$.

By induction, assume that the solution with the required structural properties has been constructed for $t\in [0, t_j]$. 
We will show that it can be uniquely constructed on the additional interval
\([t_j,t_{j+1}]\). 

\v{\bf 2.} Assume that $(t_j, X_j)$ is a point where $u_x>-\delta$, hence the flux is $f$ in a neighborhood of this point.  Using Lemma~\ref{l:21}, we obtain a unique solution containing a new $N$-shaped spike in a neighborhood of $(t_j, X_j)$. As in (\ref{spk}), this will satisfy
$$u(t,x)~=~{\alpha\over 2} - {x-X_j\over \beta (t-t_j)} \qquad \qquad  y_j'(t)<x<y''_j(t).$$
In particular, on the open interval $\,\bigl]y_j'(t), y''_j(t)\bigr[\,$, the gradient is constant:
\bel{gradc}u_x(t,x)~=~{-1\over\beta( t-t_j)}\,.\eeq
   On the other hand, away from the point $X_j$,
 by genuine nonlinearity the gradient $u_x(t_j,\cdot)$ is uniformly bounded.
Therefore, on some interval $[t_j, t_j']$, a unique piecewise $\C^1$ solution can be constructed by the method of characteristics,
while the positions of the shocks are determined by the Rankine-Hugoniot equations.
\v
{\bf 3.} Next, we
 observe that on the remaining interval $t\in [t'_j, t_{j+1}]$, since no new spikes are generated,  the gradient $u_x$ remains uniformly bounded. We can thus prolong in time the solution, until 
\begi
\item[(i)] two shocks merge into each other, or
\item [(ii)]  the gradient $u_x$ grows past the threshold $-\delta$ on some open interval   $\,\bigl]y_i'(t), y''_i(t)\bigr[\,$.
\endi
Since for $t\in [0, t_{j+1}[\, $ the total number of shocks cannot exceed $ 2j$, each of the above cases can occur only finitely many times.

In case (i), the two upward shocks are simply replaced by a single one, and the construction continues.

In case (ii), the assumption that the flux $g$ is quadratic implies that 
$u_x(t,x)~=~{-1\over \beta(t-t_i)}$ on the interval $\,\bigl]y_i'(t), y''_i(t)\bigr[\,$ containing a centered rarefaction generated at time $t=t_i$.
The gradient crosses the value $-\delta$ simultaneously at all points of this interval, at time
\bel{ti*}t^*_i~=~t_i + \Hat \tau,\qquad\qquad \Hat\tau\,\doteq\,{1\over \beta\delta}\,.\eeq
The solution is thus prolonged by switching the flux from $g$ to $f$.  Notice that, for $t>t^*_i$, this will
change the speed of characteristics over this interval, as well as the speed of the shocks
at the endpoints $y_i'(t)$, $y_i''(t)$.

In a finite number of restarting steps 
we can thus extend the solution on the entire interval $[t_j, t_{j+1}]$.  
\v
{\bf 4.} 
In view of (\ref{tseq}), by induction on $j=1,2,\ldots$ we can  uniquely 
prolong the solution $u=u(t,x)$ for all times $t\in [0, +\infty[\,$, provided that
its values remain inside the open interval $\,]0,\alpha[\,$.
In this last step we check that this is indeed the case.

We recall that the solution $u$ is piecewise $\C^1$ and $u_x\leq 0$ outside the shocks.
Consider a shock with left and right states $u^-(t)<u^+(t)$.   If the flux is $f$ on both sides of the shock,
then characteristics flow inward toward the shock from both sides, and this implies
$${d\over dt} u^-(t)\,>\,0,\qquad\qquad {d\over dt} u^+(t)\,<\,0.$$
The same is true if the flux is $g$ on both sides of the shock.

It remains to check what happens if the flux changes from $f$ to $g$ across the shock.
To fix ideas, consider the left state across a shock 
$$u^-(t)~\doteq~\lim_{x\to y_i''(t)-} u(t,x),$$
assuming that  the flux is $g$ on the left and $f$ on the right.
To make sure that this state does not decrease to zero in finite time, we compute
\bel{du0}\bega{l}\ds{d\over dt} u^-(t)~=~- u_x\bigl(t, y_i''(t)-\bigr) \cdot \left[ g'\bigl(u^-(t)\bigr)-{d\over dt} y_i''(t)\right] \\[4mm]
\qquad\qquad\ds =~ {1\over \beta(t-t_i)}  \cdot \left[ g'\bigl(u^-(t)\bigr)-{f\bigl(u^+(t)\bigr)- g\bigl(u^-(t)\bigr)\over u^+(t)- u^-(t)}\right].\enda
\eeq
If at a first time $\tau>t_i$ we had the convergence $u^-(t)\to 0$ as $t\to \tau-$,  then the right hand side of (\ref{du0}) would converge to 
$$ {1\over \beta(\tau-t_i)}  \cdot \left[ g'\bigl(0)-{f\bigl(u^+(\tau)\bigr)\over u^+(\tau)}\right]~>~0,$$
 (see Fig.~\ref{f:df133}, right). This yields a contradiction.

An entirely similar argument shows that, within the interval $\bigl[ y_i'(t), \, y_i''(t)\bigr]$,  the solution 
always remains strictly smaller than $\alpha$.  This completes the proof.
 \endproof

\begin{remark} {\rm The assumption (\ref{gprop}) that the flux function $g$ is quadratic plays a key role in our model.   Indeed, it implies that, as long as the flux is $g$, the gradient $u_x(t,x)$ is constant in $x$.   As $u_x$ crosses the threshold $-\delta$, the flux function thus switches from $g$ to $f$ simultaneously at all points of an interval.  This greatly simplifies the construction of solutions.

On the other hand, when $u_x>-\delta$ the flux becomes $f$, which is not assumed to be a quadratic polynomial.   Hence the evolution equation for the gradient $u_x$ depends on $u$ as well,
and the solution is no longer piecewise affine (see Figures \ref{f:df104} and \ref{f:df103}).
}
\end{remark}

\begin{figure}[ht]
\centerline{\hbox{\includegraphics[width=11cm]{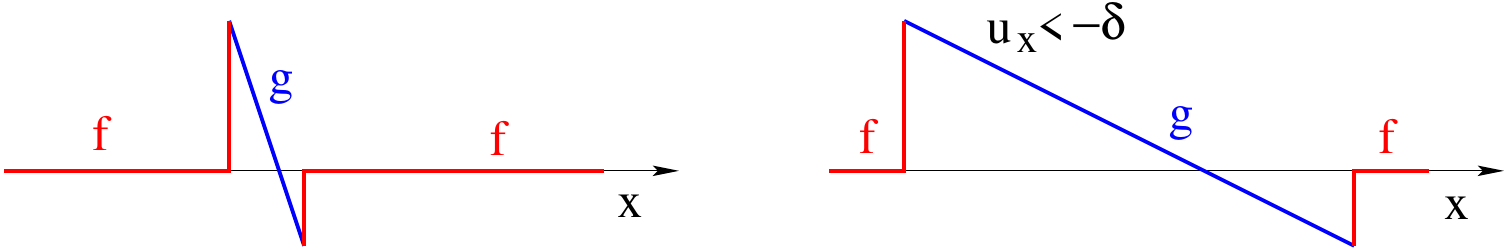}}}
\caption{\small Left: a random spike in the solution to (\ref{fgclaw}).  Right: as time increases, by genuine nonlinearity the gradient $u_x$ increases asymptotically to zero.  }
\label{f:df104}
\end{figure}

\begin{figure}[ht]
\centerline{\hbox{\includegraphics[width=11cm]{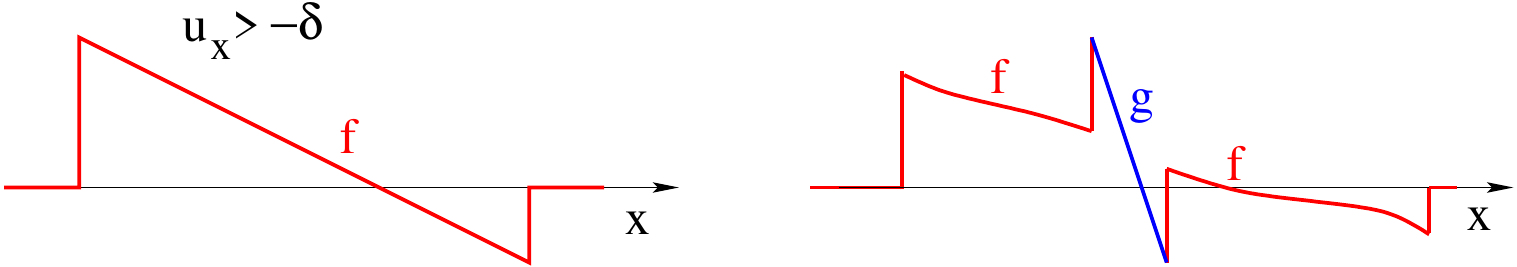}}}
\caption{\small Left: as soon as $u_x>-\delta$, the flux becomes $f(u)$ at every point.
Right:  In this setting, a new random spike can emerge.
Notice that, since the flux $f$ is not quadratic, the gradient $u_x$ is no longer a constant outside the
$N$-shaped spike. }
\label{f:df103}
\end{figure}

Relying on Theorem~\ref{t:21}, we can now assign a probability distribution on the family of all weak 
solutions to the conservation law (\ref{fgclaw}).  
More precisely,  let the fluxes $f,g$ satisfy the assumptions {\bf (A1)}, and let
the constants $\lambda>0$, $u_0\in \,]0,\alpha[\,$ be given. 

Consider the space
$\bfX$  of all sequences $(t_j, X_j)_{j\geq 1}$, where the random times $t_j$ have i.i.d.~increments as in (\ref{P}), and the points $X_j\in [0,L]$ are independent, uniformly distributed random variables.
We take $\bfX$ as our basic probability space.   Note that, with probability 1, the limit in (\ref{tseq}) holds.

For a.e.~sequence of points $(t_j, X_j)_{j\geq 1}$ in $\bfX$, Theorem~\ref{t:21} yields
a unique weak solution $u=u(t,x)$ of the Cauchy problem (\ref{fgclaw}), (\ref{id}), defined for 
all $t\geq 0$ and satisfying the conditions {\bf (I)-(II)} stated in the Introduction.

The push-forward of the probability measure on $\bfX$ through the map
\bel{push}(t_j, X_j)_{j\geq 1}\qquad \longrightarrow\qquad u(\cdot,\cdot)\eeq
defines a probability measure on the family of all weak solutions.
In the remainder of this paper we will study the stochastic process whose random paths are
weak solutions $t\mapsto u(t,\cdot)$ of (\ref{fgclaw}), (\ref{id}), with probability distribution determined by (\ref{push}).

\section{The case $\delta=0$}  \label{sec:3}
\setcounter{equation}{0}

In \cite{ABS2}, only the case where $\delta=0$ in (\ref{fgclaw}) was considered.
However, this model does not lead to an interesting dynamics.  Indeed, with probability 1
as $t\to +\infty$  the solution converges asymptotically to a constant.

\begin{proposition}\label{p:41} Let $f,g$ be $\C^2$ concave functions as in {\bf (A1)},
and assume $\delta=0$.
Consider any constant  initial data  (\ref{id}) with  $\theta\equiv 1$, so that the flux is $f$ at every point.

Then
with probability 
one there holds
\bel{limut} \lim_{t\to +\infty} \bigl\|u(t,\cdot) -u_0\bigr\|_{\L^\infty}~=~0.\eeq
\end{proposition}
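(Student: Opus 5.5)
With $\delta=0$, a new spike can only be inserted at a point where $u_x(t_j,X_j)>0$. Every admissible profile in $\S$ is non-increasing between jumps, so $u_x\le 0$ wherever it is defined. The only exceptional profile is the constant one, which has $u_x=0$ and yet, by hypothesis, flux $f$ with $\theta\equiv 1$. I therefore plan to show that, with probability one, at most one spike is ever created, and then that the solution containing this single $N$-wave decays to $u_0$ in $\L^\infty$ as $t\to+\infty$.

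\textbf{Step 1: at most one spike.} The first spike appears at time $t_1$, inserted at $X_1$ into the constant state, as in Lemma~\ref{l:21}. For $t>t_1$ I claim the solution is never again locally constant with flux $f$. Inside the fan $\,]y_1'(t),y_1''(t)[\,$ the gradient is $-1/(\beta(t-t_1))<0=-\delta$, so the flux is $g$ and no spike can form there. Outside the fan the solution is the constant $u_0$ on some interval until the shocks have swept the whole period. Here the rule must be read with $\delta=0$: $u_x=0$ is not $>-\delta$, and $\theta(0)$ is left undetermined by (\ref{fgclaw}). I will treat the constant region as allowing a spike only when the problem is posed with $\theta\equiv1$, which is exactly the initial hypothesis of the statement. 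Then any point with $u_x(t_j,X_j)$ defined satisfies $u_x\le 0$, and $u_x>0$ never occurs. Hence, for a.e.\ sequence $(t_j,X_j)$, by Theorem~\ref{t:21} the solution for $t\ge t_1$ is the one obtained from the single spike at $(t_1,X_1)$. It is also possible that no spike appears at all, or that $u_x=0$ at a later $X_j$ in the constant region gives no spike under this reading. Either way the number of spikes is finite, and the set of sequences for which the construction differs has probability zero.

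\textbf{Step 2: evolution of one spike.} For $\delta=0$ the gradient inside the fan stays negative for all time, since $-1/(\beta(t-t_1))$ never reaches $0$. So the fan region always carries flux $g$, the outside region carries flux $f$, and there is no switching. The solution is a single rarefaction with flux $g$, bounded by two upward shocks, each of which separates the $g$ region from the $f$ region. By periodicity the two shocks eventually merge into one upward shock, at which point the outer constant region $u_0$ disappears. Afterwards the profile is a $g$-fan of slope $-1/(\beta(t-t_1))$ together with a single shock whose left and right values both lie on that fan. Mass conservation (P2) gives mean value $u_0$. The profile on $[0,L]$ is affine with slope $-1/(\beta(t-t_1))$ and a single jump $h(t)$, and periodicity forces $h(t)=L/(\beta(t-t_1))$. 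Consequently the oscillation satisfies $\sup u-\inf u\le L/(\beta(t-t_1))\to 0$, and together with the mean constraint this yields (\ref{limut}). Before merging, the same reasoning on the fan together with the bounds in Step~4 of the proof of Theorem~\ref{t:21} keeps $u$ inside $\,]0,\alpha[\,$.

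\textbf{Main obstacle.} The hardest step is showing that the shocks actually merge in finite time, so that the outer $f$-region disappears. I would first use Lemma~\ref{l:21}(ii) and the Rankine--Hugoniot relations, together with the tangency geometry, to show that $\dot y_1''-\dot y_1'$ stays bounded below by a positive constant. This should follow because $u^\pm$ move monotonically away from $u_0$ along the fan, and the shock speeds are secant slopes that are separated by $g'(w^-)-g'(w^+)>0$. If this bound fails, the fallback is an alternative argument: as long as the $f$-region persists, the fan width $y''-y'$ grows at a rate of at least a fixed positive amount, so it must reach $L$ in finite time.
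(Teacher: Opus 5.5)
\textbf{Gap: the claim in Step~1 that at most one spike is ever created is false for this process.}

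You need the constant state $u_0$ to carry flux $f$ (i.e.\ $\theta=1$) in order to create the first spike. You also use this yourself in Step~2 (``the outside region carries flux $f$''). The same convention applies to whatever part of $[0,L]$ is still at the value $u_0$ after $t_1$: there $u_x=0$, exactly as at time $t_1$, and the local flux is $f$. So rule (II) inserts a new spike at every later $(t_j,X_j)$ with $X_j$ in the constant region. That region has positive measure until the shocks have swept a full period, so with positive probability several spikes are created.

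Your mixed reading (the first spike is allowed, later ones in the same state are not) matches neither possible reading of the rule. If one insists on $u_x>-\delta=0$ literally, not even the first spike forms and the statement is trivial. As a result, Step~2 covers only an event of probability strictly less than one, since it relies on a single fan of slope $-1/(\beta(t-t_1))$ and the explicit jump $L/(\beta(t-t_1))$.

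\textbf{What is missing: follow the union $D_0(t)$ of all intervals where $u(t,\cdot)=u_0$, not the two shocks of one spike.}

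Every spike is born inside the constant state, so with $\omega\equiv u_0$ the fixed point in Lemma~2.1 is explicit. Setting $\lambda^\pm=g'\bigl(w^\pm(u_0)\bigr)$, one has $y_i'(t)=X_i+(t-t_i)\lambda^+$ and $y_i''(t)=X_i+(t-t_i)\lambda^-$. The traces on the fan side are identically $w^\pm(u_0)$; they do not move away from $u_0$, as you suggest.

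Hence every component of $D_0(t)$ is bounded by a shock of speed $\lambda^-$ on the left and one of speed $\lambda^+$ on the right. Each component therefore shrinks at rate $\lambda^- - \lambda^+>0$, and a new spike only splits a component. So $\meas\bigl(D_0(t)\bigr)$ vanishes by time $t_1+T$, with $T=L/(\lambda^- - \lambda^+)$. After that the flux is $g$ everywhere and no new spike can form.

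All fans are born before $t_1+T$, so $u_x\geq -1/\bigl(\beta(t-t_1-T)\bigr)$ a.e. Replace your explicit single-jump formula by three facts:
\begin{itemize}
\item this one-sided bound;
\item periodicity with only upward jumps, so the total upward jump equals $\int_0^L|u_x|\,dx\leq L/\bigl(\beta(t-t_1-T)\bigr)$;
\item the mean value $u_0$.
\end{itemize}
Together they give $\|u(t,\cdot)-u_0\|_{\L^\infty}\leq L/\bigl(\beta(t-t_1-T)\bigr)$. This is the paper's proof. Your single-spike computation is the special case in which $D_0$ has only one component.
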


{\bf Proof.} ~{\bf 1.} 
As in Fig.~\ref{f:df131}, let $Q^-=(w^-, g(w^-))$ and $Q^+=(w^+, g(w^+))$ 
be the points where a secant line through $P=(u_0, f(u_0))$ is tangent to the graph
of $g$.   Consider the wave speeds
\bel{lpm}
\lambda^+~\doteq~g'(w^+) ~=~{g(w^+) - f(u_0)\over w^+-u_0}~<~\lambda^-~\doteq~g'(w^-) ~=~{ f(u_0)- g(w^-)\over u_0- w^-}\,.\eeq  
Let $t_1$ the first random time where a spike is formed and set
$T\doteq {\frac{L}{ \lambda^--\lambda^+}}$.
We claim that, at time $t_1 + T$, 
the solution is piecewise continuous and the flux is $g$ at all but finitely many points where an upward jump occurs.

Indeed, at any time $t>0$ the solution is piecewise continuous with finitely many upward jumps, say at $y_1(t)< y_2(t)<\cdots<y_n(t)$.   For each open interval
$\,\bigl]y_{i-1}, y_i(t)\bigr[\,$ two cases can occur:
\begi
\item[(i)]  For all $x\in\,\bigl]y_{i-1}(t), y_i(t)\bigr[\,$ one has $u(t,x)= u_0$,
and on this interval the flux is $f$.
\item[(ii)]  For all $x\in\,\bigl]y_{i-1}(t), y_i(t)\bigr[\,$ one has $u_x(t,x)<0$,
and on this interval the flux is $g$.
\endi

Let $D_0(t)\subset [0,L]$ denote the union of all open intervals where
$u(t,x)=u_0$.  If $ \bigl]y_{i-1}, y_i(t)\bigr[\,$ is any one of such intervals,
by (\ref{lpm}) its
endpoints move with speeds
$$\dot y_{i-1}~=~\lambda^-,\qquad\qquad\dot y_{i}~=~\lambda^+.$$
In particular, the length of this interval varies at rate $\lambda^+-\lambda^-$.
As long as $D_0$ is nonempty, its measure thus decreases at rate
$${d\over dt} \hbox{meas}\bigl(D_0(t)\bigr)~\leq~\lambda^+-\lambda^-.$$
In particular, since $\meas\bigl(D_0(t_1)\bigr)=L$, this implies 
$$\meas\left(D_0\Big(t_1+ {L\over \lambda^--\lambda^+}\Big) \right)~=~0,
$$ proving our claim.
\v
{\bf 2.}
By the previous step, for $t>t_1+T$ the function  $u(t,\cdot)$ is a solution to
$$u_t + g(u)_x~=~0,$$
 and no new spikes can be formed.
Since $g''(u)=-\beta$, this implies
\bel{1lip}-{ 1\over \beta   (t- t_1-T)} ~\leq~u_x(t,x)~\leq~0\qquad\quad\hbox{for a.e.}~x\in [0,L].\eeq
In turn, since  $u(t,\cdot)$ is $L$-periodic and can only have upward jumps, this yields the 
 one-sided Lipschitz property
$$u(t,x_2) - u(t, x_1)~\geq~-{x_2-x_1\over \beta   (t- t_1-T)}
\qquad\forall x_1<x_2\,,$$
and hence
\bel{TVt}
\TV\bigl\{ u(t,\cdot)\,;~[0,L[\,\bigr\}~\leq~{2L\over \beta   (t- t_1-T)}\,.\eeq

Furthermore, by the conservation equation, for every $t>0$ one has
$${1\over L} \int_0^L u(t,x)\, dx~=~{1\over L} \int_0^L u_0\, dx~=~u_0\,,$$
and hence
\bel{mm}
\inf_{x\in[0,L]} ~u(t,x)~\leq~u_0~\leq~\sup_{x\in[0,L]} ~u(t,x).\eeq
Together, (\ref{TVt}) and (\ref{mm}) imply
\bel{dist}
\bigl\| u(t,\cdot)-u_0\bigr\|_{\L^\infty}~\leq~{L\over \beta   (t- t_1-T)}\,,\eeq
where the right hand side approaches zero as $t\to +\infty$.
\endproof

\section{Bounds on the total variation}
\label{sec:4}
\setcounter{equation}{0}
Let $u=u(t,x)$ be the solution with constant initial data (\ref{sol0}), corresponding to
 the random sequence $(t_j, X_j)_{j\geq 1}$.
At any time $t>0$, call
\bel{k(t)} k~=~k(t)~\doteq~\max\bigl\{ k\geq 1\,;~~t_k<t\bigr\},\eeq
so that (see Fig.~\ref{f:df112})
\bel{tk}0\,<\,t_1\,<~\cdots ~<\, t_k\,<\,t\,\leq\, t_{k+1}\,.\eeq

\begin{lemma}\label{l:31}
In the above setting, the total variation of the solution $u(t,\cdot)$ over the domain $[0,L]$ is bounded by
\bel{TVu}\TV\bigl\{ u(t,\cdot)\bigr\}~\leq~2\alpha+2\alpha\cdot \min\left\{ n\geq 1\,;~
\sum_{i=1}^n (t- t_{k-i}) \geq {L\over \alpha \beta^*}\right\}.
\eeq
\end{lemma}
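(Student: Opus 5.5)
Since $u(t,\cdot)$ is $L$-periodic, piecewise $\C^1$ and non-increasing between upward jumps, I will write its total variation as $2\int_0^L |u_x^{ac}|\,dx$, i.e. twice the total decrease over the smooth parts. Indeed, over one period the sum of the upward jumps equals the total decrease, so
\[
\TV\{u(t,\cdot)\} ~=~ 2\int_{[0,L]\setminus\{\hbox{shocks}\}} |u_x(t,x)|\,dx .
\]
It therefore suffices to bound $\int |u_x|\,dx$ over the smooth parts by $\alpha + \alpha n$, where $n$ is the minimum in (\ref{TVu}).

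The first step is to sort the points of $[0,L]$ at time $t$ by the ``age'' of the characteristic through them. Every smooth region either belongs to a spike generated at some time $t_{k-i}$, or is a part whose characteristics have evolved under a concave flux since an earlier time. For any non-increasing piece of smooth solution to $u_t+h(u)_x=0$ with $h''\le -\beta^*$, the standard Oleinik-type estimate gives $u_x(t,x)\ge -1/(\beta^*(t-s))$ along characteristics issued at time $s$. This holds both for the centered rarefaction fan with flux $g$, where $u_x=-1/(\beta(t-t_i))\ge -1/(\beta^*(t-t_i))$, and after switching to $f$. The switch can only happen when $u_x>-\delta$, and the Riccati equation $\dot p=-h''(u)p^2$ keeps the bound $p\ge -1/(\beta^*(t-s))$ once it holds. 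So on the smooth region generated by the spike at $t_{k-i}$, and on any subsequent evolution of that region, we get $|u_x|\le 1/(\beta^*(t-t_{k-i}))$. The original constant region has $u_x=0$.

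The second step groups the smooth intervals at time $t$ according to the spike they come from, using the $n$ most recent spikes $t_{k-1},\dots,t_{k-n}$ together with all older material. Here I also count $t_k$, so I will index the spikes as the paper does. Let $\ell_i$ be the total length of intervals coming from spike $i$ and $\ell_{\rm old}$ the length of the rest. On each interval the decrease of $u$ is at most $\alpha$ because $0<u<\alpha$, and it is at most $\ell\cdot 1/(\beta^*(t-t_{k-i}))$ by the gradient bound. Characteristics of age at least $t-t_{k-n}$ satisfy $|u_x|\le 1/(\beta^*(t-t_{k-n}))$. Choosing $n$ so that $\sum_{i=1}^n (t-t_{k-i})\ge L/(\alpha\beta^*)$ is meant to make the total decrease at most $\alpha$ per recent spike plus one extra $\alpha$. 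To get this, I will pick a threshold age $\tau$ among the $t-t_{k-i}$ and estimate the old material by $L/(\beta^*\tau)$, bounding each young region crudely by $\alpha$. More carefully, I will use the weighted inequality
\[
\sum_i \min\Bigl\{\alpha,\ \frac{\ell_i}{\beta^*(t-t_{k-i})}\Bigr\} ~\le~ \alpha n + \alpha,
\]
with $\sum \ell_i\le L$. This follows since $\min\{\alpha,\ell/(\beta^* s)\}\le \alpha$, and the remaining terms with ages ordered increasingly satisfy $\sum \ell_i/(\beta^* s_i)\le L/(\beta^* s_{n})$. I will check that the condition on the sum of ages, rather than just the single age $s_n$, is what is needed, adjusting the grouping if necessary.

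The main obstacle is this counting step. A single spike may fragment into several smooth intervals after merging shocks, so the bound ``at most $\alpha$ per spike'' needs justification. My first attempt is to observe that the intervals descended from one spike are separated only by upward jumps, and that their union, viewed as one profile, has decrease at most $\alpha$ because values stay in $]0,\alpha[$ and characteristics from a spike emanate from one point in a monotone fan. If that fails, I will instead bound the number of shocks by the number of spikes and charge $\alpha$ per shock.
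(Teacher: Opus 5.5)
Your setup matches the paper's: $\TV=2\int|u_x|\,dx$; the regions $V_i(t)$ swept by characteristics issued from the spike born at $t_i$, with $u\equiv u_0$ elsewhere; $\sigma_i\doteq\int_{V_i(t)}|u_x|\,dx\le\alpha$; and the Oleinik bound giving $\sigma_i\,\beta^* s_i\le\ell_i\doteq\meas(V_i(t))$, where $s_i=t-t_i$ and $\sum_i\ell_i\le L$. Your first answer to the fragmentation worry is also the right one. Characteristics from $V_i$ keep their order and their values, so $u$ is decreasing on all of $V_i(t)$, even when a spike born later inside $V_i(t)$ splits it.

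The genuine gap is in the counting step, which is the actual content of the lemma. The inequality you write down is true, but the reason you give is not. You charge $\alpha$ to each of the $n$ youngest spikes and bound all older material by $L/(\beta^* s)$, with $s$ the $n$-th age. This yields $\alpha n+\alpha$ only under the single-age condition $s\ge L/(\alpha\beta^*)$. Under the sum condition in (\ref{TVu}) it fails. If the ages $t-t_{k-1},\dots,t-t_{k-n}$ are all slightly above $L/(n\alpha\beta^*)$, then $L/(\beta^*s)\approx n\alpha$, and you only get roughly $2n\alpha$. No regrouping fixes this as long as the two bounds are applied to separate groups. What makes the sum of ages appear is that the young spikes you charge $\alpha$ each also use up length $\ge\sigma_i\beta^* s_i$ of the common budget $L$.

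The missing idea is to use $0\le\sigma_i\le\alpha$ and $\sum_i\sigma_i\beta^*s_i\le L$ jointly. The paper treats $\max\sum_i\sigma_i$ under these constraints as a fractional knapsack problem. An exchange argument shows the maximizer saturates the youngest spikes first and has at most one fractional entry. If $m$ spikes are saturated, then $\alpha\beta^*$ times the sum of their ages is at most $L$, which forces $m\le n$, so $\sum_i\sigma_i\le(n+1)\alpha$.

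A direct version of the same bound: order the ages $s_1\le s_2\le\cdots$, youngest first with $t_k$ included. Let $m$ be minimal with $\sum_{i\le m}s_i\ge L/(\alpha\beta^*)$, and note that $\sum_{i>m}\sigma_i\le\bigl(L/\beta^*-\sum_{i\le m}\sigma_is_i\bigr)/s_m$. Then
\[
\sum_i\sigma_i~\le~\sum_{i\le m}\sigma_i\Bigl(1-\frac{s_i}{s_m}\Bigr)+\frac{L}{\beta^*s_m}~\le~\alpha m+\frac{L/\beta^*-\alpha\sum_{i\le m}s_i}{s_m}~\le~\alpha m .
\]
Moreover $m\le n+1$, because the ages of $t_{k-1},\dots,t_{k-n}$ already sum to at least $L/(\alpha\beta^*)$.

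Your fallback of charging $\alpha$ per shock would not help, since it discards the age weighting entirely.
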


{\bf Proof.} {\bf 1.}  As in (\ref{ti*}), set $t_i^*\doteq t_i + {1\over \beta\delta}$. 
For $t>t_i$, consider the set
$V_i(t)\subset [0,L]$ defined as follows.     
\begi
\item If no new spike is generated at time $t=t_i$, then $V_i(t)=\emptyset$ for all $t$.

\item If a new $N$-shaped spike is generated at time $t_i$, for $t_i<t\leq t_i^*$  we let $V_i(t)\doteq \bigl] y'_i(t), y_i''(t)\bigr[\,$
be the open interval bounded by the two shocks originating at $(t_i, X_i)$. 

Moreover, for $t>t_i^*$, we let 
$V_i(t)$ be the set of all points reached by a characteristic starting from a point $y\in V_i(t^*_i)$ at time $t_i^*$. 
\endi
The previous definition requires some explanation. By construction, at time $t_i^*$, on the interval $V_i(t_i^*)$
the flux function switches from $g$ to $f$.   A characteristic of the solution $u$ starting from a point $y\in V_i(t_i^*)$  thus has the form
$$t~\mapsto~ x(t,y)~=~y+ (t-t_i^*) f'\bigl(u(t_i^*,y)\bigr), \qquad\qquad t\in \bigl[t_i^*, ~\tau (y)\bigr].$$
Here $\tau(y)\in \,]t_i^*, +\infty[\,$ is the first time when the characteristic hits a shock, and hence
it terminates.  
For $t> t_i^*$, the above definition can be restated more precisely as
\bel{vi2}V_i(t)~\doteq~\Big\{ x(t,y)\,;~~y\in V_i(t_i^*), ~~\tau(y)>t\Big\}.\eeq
Note that on the remaining set
\bel{V0} [0,L]\setminus \bigcup_{i\geq 1} V_i(t)\eeq
 the solution takes the constant value $u(t,x)=u_0$.
 
 By the property {\bf (P1)} of admissible profiles stated in Definition~\ref{d:22},     the negative part of the variation of $u(t,\cdot)$
is computed by the integral of $u_x$.   We thus have
\bel{TV1}
\TV\bigl\{ u(t,\cdot)\bigr\}~=~2\int_0^L \bigl| u_x(t,x)\bigr|\, dx~=~2\sum_{t_i <t}
\int_{V_i(t)} \bigl| u_x(t,x)\bigr|\, dx.\eeq
  Note that, since $u(t,\cdot)$ takes values in $[0,\alpha]$ and is monotone decreasing on
  $V_i(t)$, one has
\bel{TV2} \int_{V_i(t)} \bigl| u_x(t,x)\bigr|\, dx~\leq~\alpha\eeq
for every $i\geq 1$ and $t> 0$.
\v
{\bf 2.}
By the genuine nonlinearity of both $f$ and $g$, according to (\ref{fgn}) 
along any characteristic the gradient $z(t)= u_x\bigl(t, x(t)\bigr)<0$ decays in time according to
\bel{zdot}{d\over dt} z(t) ~\geq~ \beta^* z^2(t).\eeq
This implies
\bel{TV3} x\in V_i(t)\qquad\implies\qquad 
 {-1\over \beta^*(t-t_i)}~\leq~u_x(t,x)~<~0.\eeq 
 Defining
\bel{TV7}\sigma_i(t)~\doteq~ \int_{V_i(t)} \bigl| u_x(t,x)\bigr|\, dx\,,\eeq
the bound (\ref{TV3}) yields
\bel{TV8}\ell_i(t)~\doteq~\meas\bigl(V_i(t)\bigr)~\geq~\sigma_i(t)\cdot \beta^*(t-t_i).\eeq
\v
{\bf 3.}
The right hand side of (\ref{TV1}) can now be bounded in terms of the optimization problem
\bel{TV9} \hbox{maximize:}\quad\sum_{i=1}^k\sigma_i\eeq
\bel{TV6}\hbox{subject to:}\qquad
 \sum_{i=1}^k \sigma_i\cdot \beta^*(t-t_i)\,\leq\, L, \qquad  \sigma_i \,\leq\,\alpha \quad\hbox{for every}~ i. \eeq
We claim that  this optimization problem (\ref{TV9})-(\ref{TV6}) has a solution of the form
\bel{optsol}\sigma_1=\cdots=\sigma_{p-1}=0, \qquad
\sigma_p\in \,]0, \alpha]\,,\qquad \sigma_{p+1} =\cdots=\sigma_k=\alpha,\eeq
for some index $p\in\{1,\ldots,k\}$.

Indeed, let $(\bar \sigma_1,\ldots, \bar \sigma_k)$ be a maximizer for 
the problem (\ref{TV9})-(\ref{TV6}). By contradiction, assume that there are two indices \(i<j\) such that
\[
\bar \sigma_i>0,\qquad \bar \sigma_j<\alpha.
\]
By decreasing $\bar \sigma_i $ and increasing $\bar \sigma_j$, we can now achieve a higher payoff.   
Indeed, for $\ve>0$ small we can define
$$\sigma_i^\ve ~=~ \bar \sigma_i - {\ve\over t-t_i}\,,\qquad\qquad \sigma_j^\ve ~=~ \bar \sigma_j + {\ve\over t-t_j}\,.$$
 For all $\ve>0$, this construction implies
 $$\sigma_i \beta^* (t-t_i) + \sigma_j^\ve \beta^* (t-t_j)~=~\bar \sigma_i +\bar \sigma_j\,,$$
 $$ \sigma_i^\ve +\sigma_j^\ve ~>~\bar \sigma_i +\bar \sigma_j\,.$$
 Hence, for $\ve>0$, the $k$-tuple obtained by replacing $\bar \sigma_i,\bar \sigma_j$ 
 respectively with
$  \sigma_i^\ve,\sigma_j^\ve $ still satisfies the constraints (\ref{TV6}) and yields a higher
payoff in (\ref{TV9}).
This contradiction shows that the maximizer has 
the form (\ref{optsol}).
\v
{\bf 4.} Using (\ref{TV1})
by the previous step we obtain
\bel{TV10}
\TV\{u(t,\cdot)\}
~=~
2\sum_{t_i<t}\sigma_i(t)
~\leq~
2(k+1-p)\alpha.
\eeq

Note that
\bel{TV11}
k+1-p
~\leq~
1+\min\left\{
n\geq 1\,;~
\sum_{i=1}^{n}(t-t_{k-i})\geq {L\over \alpha\beta^*}
\right\}.
\eeq
Together, (\ref{TV10})-(\ref{TV11}) yield 
(\ref{TVu}).
\endproof

\begin{figure}[ht]
\centerline{\hbox{\includegraphics[width=9cm]{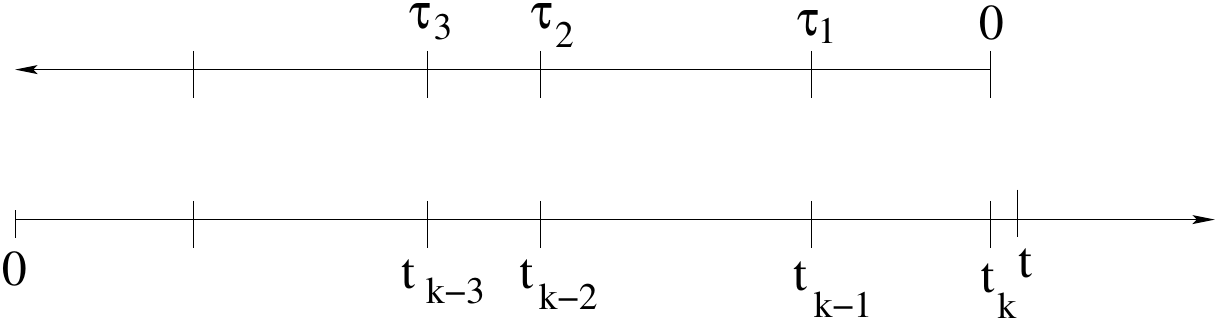}}}
\caption{\small  The times $t_j$ where a new spike can be generated, and the
corresponding times $\tau_i$ defined by $\tau_i \doteq t_k - t_{k-i}$ }
\label{f:df112}
\end{figure}

\begin{figure}[ht]
\centerline{\hbox{\includegraphics[width=12cm]{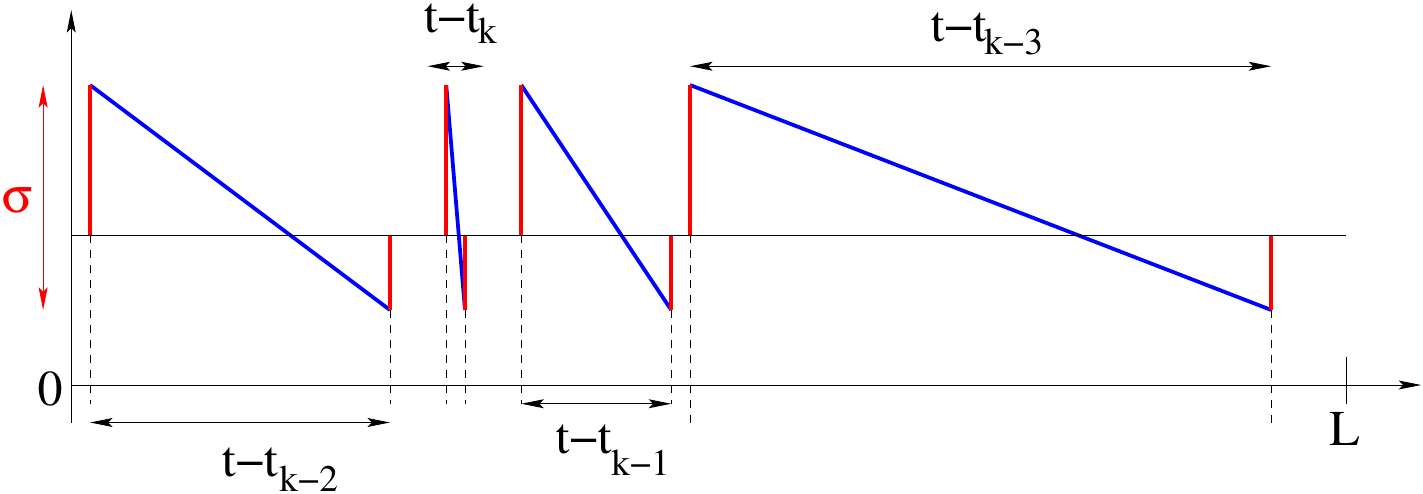}}}
\caption{\small  Sketch of the estimate in (\ref{Ndef}).   Inside the interval $[0,L]$, we 
fit as many spikes as possible.  The spike generated at time $t_{k-i}$ has amplitude $\sigma_{k-i}$ and is supported on an interval of size $\approx \sigma_{k-i} \beta(t-t_{k-i})\geq \sigma_{k-i} \beta\tau_i$.}
\label{f:df113}
\end{figure}

For a fixed time $t>0$, let $k=k(t)$ be as in (\ref{k(t)}).
Thanks to  Lemma~\ref{l:31}, a bound on the expected value of the total variation 
of $u(t,\cdot)$
can now be achieved
by estimating the expected value of the discrete random variable
\bel{Ndef}  N~\doteq~ \min\left\{ n\geq 1\,;~
\sum_{i=1}^n (t_k- t_{k-i}) \geq {L\over \alpha \beta^*}\right\}~\geq~
\min\left\{ n\geq 1\,;~
\sum_{i=1}^n (t- t_{k-i}) \geq {L\over \alpha \beta^*}\right\}.\eeq
Setting
$$\Tilde\theta_j~\doteq~t_{k-j+1}- t_{k-j}\,,$$
we reach the equivalent expression
\bel{Ndef2} N~\doteq~ \min\left\{ n\geq 1\,;~
\sum_{j=1}^n j\,\Tilde\theta_j \geq {L\over \alpha \beta^*}\right\},\eeq
where now the $\Tilde\theta_j$ are independent, identically distributed random variables, 
with
$$\hbox{Prob.}\{ \Tilde\theta_j>s\}~=~e^{-\lambda s}.$$
The further change of variable
$$\theta_j~\doteq~{\alpha \beta^*\over L }\Tilde\theta_j$$
leads to the simpler formula
\bel{Ndef3}  N~=~ \min\left\{ n\geq 1\,;~
\sum_{j=1}^n j\,\theta_j \geq 1\right\},\eeq
where $(\theta_j)_{j\geq 1}$ is a sequence of independent, identically
distributed random variables with 
\bel{P2}\hbox{Prob.}\{ \theta_k > s\}~=~e^{-\mu s},\qquad\qquad \mu= {\lambda L\over\alpha \beta^*}\,.\eeq

\begin{proposition}\label{prop:41}
In the above setting, the expected total variation of $u(t,\cdot)$ is bounded by
\bel{TVineq}\ov W(t)~\doteq~E\Big[ \TV\bigl\{ u(t,\cdot)\bigr\}\Big]~
\leq~2\alpha \cdot \left( 1+ e^{\lambda L/\alpha\beta^*}\right).\eeq
\end{proposition}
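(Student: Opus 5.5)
We plan to combine Lemma~\ref{l:31} with an explicit tail estimate for the random variable $N$ in (\ref{Ndef3}). By (\ref{TVu}) and (\ref{Ndef}), pointwise on the probability space we have $\TV\{u(t,\cdot)\}\leq 2\alpha+2\alpha N$. Hence it suffices to prove
$$E[N]~\leq~e^{\mu},\qquad\qquad \mu={\lambda L\over \alpha\beta^*}\,.$$

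\textbf{Step 1: reduction to an i.i.d. sequence.} A technical point is that only $k$ past arrival times exist. If $n$ exceeds $k$, the index $t_{k-i}$ is not defined. To handle this, we extend the Poisson process backward to negative times by appending i.i.d. exponential gaps $\Tilde\theta_j$ for $j>k$. In the optimization argument of Lemma~\ref{l:31}, the count $k+1-p$ never exceeds $k$. Any such fictitious spikes only enlarge the admissible family in (\ref{TV9})-(\ref{TV6}), so the bound $\TV\leq 2\alpha(1+N)$ remains valid with $N$ defined by (\ref{Ndef3}). Here $(\theta_j)_{j\geq1}$ is a genuine i.i.d. sequence with law (\ref{P2}). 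The memoryless property of the Poisson process ensures that the gaps $t_{k-j+1}-t_{k-j}$, read backward from $t_k$, are indeed i.i.d. exponential. The slight bias from $t$ versus $t_k$ only helps, as already noted in (\ref{Ndef}).

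\textbf{Step 2: tail formula.} We write
$$E[N]=\sum_{n\geq 0}\prob\{N>n\}=1+\sum_{n\geq1}\prob\Big\{\sum_{j=1}^n j\,\theta_j<1\Big\}.$$
The joint density of $(\theta_1,\ldots,\theta_n)$ is $\mu^n e^{-\mu\sum\theta_j}\leq \mu^n$. Therefore each probability is bounded by $\mu^n$ times the volume of the simplex $\{s_j\geq0,\ \sum_{j=1}^n j s_j<1\}$. The change of variables $r_j=js_j$ turns this into the standard simplex of volume $1/n!$, with Jacobian $1/n!$. So the volume is $1/(n!)^2$.

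\textbf{Step 3: summation.} Combining these,
$$E[N]\leq \sum_{n\geq0}{\mu^n\over (n!)^2}\leq \sum_{n\geq 0}{\mu^n\over n!}=e^{\mu}.$$
This yields (\ref{TVineq}).

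The main obstacle is Step 1: justifying rigorously that $N$ in (\ref{Ndef3}) dominates the quantity in (\ref{TVu}) even when few arrivals have occurred, and that the backward gaps are i.i.d. exponential. I would handle it by the backward-extension coupling described above. Steps 2 and 3 are routine.
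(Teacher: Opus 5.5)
Your proposal is correct and follows the paper's proof: Lemma~\ref{l:31} gives $\TV\{u(t,\cdot)\}\leq 2\alpha(1+N)$, then $E[N]=\sum_{n\geq 0}\prob\{N>n\}$, and each tail is bounded by $\mu^n$ times a simplex volume. The only difference is that you keep the weights $j$ and obtain the sharper $\mu^n/(n!)^2$, while the paper first uses $j\theta_j\geq\theta_j$ and the Gamma distribution to get $\mu^n/n!$, which already suffices.

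Your Step~1 makes explicit a point the paper only asserts, and it is airtight if you realize the arrivals as the restriction to $\,]0,+\infty[\,$ of a homogeneous Poisson process on all of $\mathbb{R}$. Then the points preceding $t$, read backward, have i.i.d.\ exponential gaps, and the gap straddling the origin is $t_1-t_0=t_1+|t_0|$ with $t_0<0$. Keeping $t_0=0$, or appending a fresh exponential gap directly below $t_1$, would not give an i.i.d.\ sequence, because the random index $k(t)$ at which the genuine gaps run out is correlated with their sizes.
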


{\bf Proof.} {\bf 1.} In view of Lemma~\ref{l:31}, we need to estimate the expected value of the random variable $N$ in (\ref{Ndef3}).
By definition, 
\bel{EN1} \bega{rl}E[N]&=~\ds \sum_{k\geq 1} k \cdot\prob\{N=k\}~=~\sum_{k\geq 1} 
k\cdot \Big(\prob\{N\geq k\}-\prob\{ N\geq k+1\}\Big)\\[4mm]
&=~\ds\sum_{k\geq 1} 
k\cdot \prob\{N\geq k\}-\sum_{k\geq 1}(k-1)\cdot \prob\{ N\geq k\}~=~\sum_{k\geq 1} 
\prob\{N\geq k\}.
\enda
\eeq
We now have
\bel{EN2}\prob\{N\geq k\}~=~\prob\left\{ \sum_{j=1}^{k-1}j\theta_j <1\right\}~\leq~
\prob\left\{ \sum_{j=1}^{k-1}\theta_j <1\right\}.\eeq
Since the $\theta_j$ are independent random variables with exponential distribution
(\ref{P2}), their sum can be estimated by
\bel{EN3}\prob\left\{ \sum_{j=1}^{k-1}\theta_j <1\right\}~=~
 {\mu^{k-1}\over \Gamma(k-1)} \int_0^1 
x^{k-2} e^{-\mu x}\, dx~\leq~~{\mu^{k-1}\over (k-2)!} \cdot{1\over k-1}
~=~{\mu^{k-1}\over (k-1)!} \,,\eeq
where $\mu$ is the constant in (\ref{P2}).
Combining (\ref{EN1})--(\ref{EN3}) we obtain
\bel{EZ4} E[N]~\leq~\sum_{k\geq 1} {\mu^{k-1}\over (k-1) !} ~=~e^{\mu}.\eeq
 Hence
\bel{TV5}E\Big[ \TV\bigl\{ u(t,\cdot)\bigr\}\Big]~
\leq~2\alpha\cdot\bigl( 1+  E\bigl[  N\bigr]\bigr)~
\leq~2\alpha\cdot( 1+  e^\mu).\eeq
Recalling (\ref{P2}) one obtains (\ref{TVineq}).
\endproof

\subsection{A bound on the variance 
of the total variation.}
With the notation introduced at (\ref{TVineq}), the variance of the total variation is
\bel{Var1} \Var\Big(\TV \bigl\{u(t,\cdot)\bigr\}\Big) ~=~ 
E\Big[\Big(TV\bigl\{u(t,\cdot)\bigr\}- \ov W(t)\Big)^2\Big].\eeq
Since the expected value of the total variation trivially satisfies $\ov W(t)\geq 0$,
we have
\bel{Var2}
E\Big[\Big(TV\bigl\{u(t,\cdot)\bigr\}- \ov W(t)\Big)^2\Big]~
\leq~E\Big[\Big(TV\bigl\{u(t,\cdot)\bigr\}\Big)^2\Big]~
\leq~
4\alpha^2E[(1+N)^2].\eeq

Observing that
\[
N^2~=~\sum_{k=1}^N (2k-1)~=~
\sum_{k\geq 1}(2k-1)\mathbf 1_{\{k\leq N\}},
\]
taking expectations we obtain
\bel{EN5}
E[N^2]
~=~
\sum_{k\geq 1}(2k-1) \prob\{N\geq k\} ~\leq~\sum_{k\geq 1}(2k-1)\frac{\mu^{k-1}}{(k-1)!}\,.
\eeq
Using the variable $j=k-1$ one obtains
\bel{EN6}
E[N^2]
~\leq~
2\sum_{j\geq 0}j\frac{\mu^j}{j!}
+
\sum_{j\geq 0}\frac{\mu^j}{j!}~=~ (2\mu+1)e^\mu.
\eeq
Therefore
\bel{EN7}
E[(1+N)^2]
~=~
E[N^2]+2E[N]+1
~\leq~
(2\mu+1)e^{\mu}+2e^{\mu}+1
~=~
(2\mu+3)e^{\mu}+1.
\eeq
By the definition of $\mu$ at (\ref{P2}), this yields
\bel{Var}
\Var\bigl(\TV\{u(t,\cdot)\}\bigr)
~\leq~
4\alpha^2\left[\left( 2\frac{\lambda L}{\alpha \beta^*}+3\right)e^{{\lambda L/\alpha \beta^*}}+1\right].
\eeq

\section{Bounds on the average velocity and  acceleration}
\label{sec:5}
\setcounter{equation}{0}
Since both fluxes $f,g$ are concave down, an upper bound on the average velocity of cars can be achieved by an application of Jensen's inequality.  This bound is deterministic, i.e., it
does not depend
on the random sequence $(t_j, X_j)_{j\geq 1}$ that determines a particular solution.

\begin{proposition}\label{p:31} At every time $t\geq 0$, for every random solution $u(t,\cdot)$ 
the average velocity satisfies the bound 
\bel{Vavb}V^{avg}(t)~\leq~{f(u_0)\over u_0}\,.\eeq
\end{proposition}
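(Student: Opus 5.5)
The average velocity is not defined in the text up to this point, so I take the natural one: the total flux divided by the total number of cars on the ring,
$$V^{avg}(t)~\doteq~{\int_0^L \Big[\Theta(t,x) f\bigl(u(t,x)\bigr) + \bigl(1-\Theta(t,x)\bigr) g\bigl(u(t,x)\bigr)\Big]\,dx\over \int_0^L u(t,x)\,dx}\,,$$
so that $V^{avg}$ is the density-weighted mean of the car speeds $(\hbox{flux})/u$. Property {\bf (P2)} of admissible profiles, which holds for every solution by Theorem~\ref{t:21}, gives
$$\int_0^L u(t,x)\,dx~=~L u_0$$
for all $t\geq 0$. The denominator is therefore deterministic and constant in time, and it suffices to prove
$$\int_0^L \hbox{flux}\bigl(t,x\bigr)\,dx~\leq~ L f(u_0).$$

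First I compare the two fluxes pointwise. Assumption {\bf (A1)} gives $g(u)\leq f(u)$ on $[0,\alpha]$, and Theorem~\ref{t:21} keeps $u(t,x)\in\,]0,\alpha[\,$. Since $\Theta\in[0,1]$, this yields
$$\Theta f(u)+(1-\Theta)g(u)~\leq~ f(u)$$
at almost every $x$. The jump points form a finite set, so they do not affect the integral.

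Next I apply Jensen's inequality to the concave function $f$, using the normalized Lebesgue measure $dx/L$ on $[0,L]$:
$$\frac1L\int_0^L f\bigl(u(t,x)\bigr)\,dx~\leq~ f\Big(\frac1L\int_0^L u(t,x)\,dx\Big)~=~f(u_0).$$
Dividing by $u_0$ gives (\ref{Vavb}). The argument uses no randomness: it holds path by path for every sequence $(t_j,X_j)$.

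The only step that needs care is the definition of $V^{avg}$. If the paper instead means the spatial average of the velocity $v=\hbox{flux}/u$, that is $\frac1L\int v\,dx$, the estimate above does not apply directly. In that case I would reweight by density, which gives the physically meaningful average over cars, and then proceed as above. For a pure spatial average of $f(u)/u$ the bound can fail, since $f(u)/u$ decreases in $u$ and low-density regions raise the average. I therefore expect the intended definition to be the ratio of total flux to total mass, and I would state it explicitly at the start of the proof.
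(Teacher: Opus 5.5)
Your proof is correct and follows essentially the same route as the paper. The paper defines $V^{avg}(t)$ exactly as you guessed, namely total flux divided by the total mass $u_0L$. It then replaces $g$ by $f$ using $f\geq g$ and applies Jensen's inequality to the concave flux $f$ together with conservation of mass.
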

{\bf Proof.} We recall that $f\geq g$ and that the total number of cars within $[0,L]$ 
remains constantly equal to $u_0 L$.  Using
Jensen's inequality we thus obtain
\bel{vav}\bega{rl}
V^{avg}(t)&\ds \doteq~ {1\over u_0 L}\left[ \int_{\theta=0} g\bigl(u(t,x)\bigr)\, dx+ \int_{\theta=1}f \bigl(u(t,x)\bigr)\, dx\right]\\[4mm]
&\ds
\le~{1\over u_0}\cdot 
\frac{1}{L}\int_0^L f\bigl(u(t,x)\bigr)\,dx ~
\le~{1\over u_0}
f\!\left(\frac1L\int_0^L u(t,x)\,dx\right)~=~{1\over u_0} f(u_0).\enda
\eeq
\endproof

Next, we derive a bound on the expected average acceleration.
In a region where the flux is 
$f(u)$ (or $g(u)$, respectively), the speed of cars is
$$v^f(u)~=~{f(u)\over u}\qquad\hbox{or}\qquad v^g(u)\,=\,{g(u)\over u}\,.$$
As a preliminary, we show that this speed decreases across every admissible (i.e., upward) jump, where $0<u^-<u^+<\alpha$. Four cases need to be considered, depending on whether the flux to the left and to the right is $f$ or $g$.

Since $f(0)=g(0)=0$ and $f,g$ are concave down,  for a jump with the same flux to the left and to the right  one immediately gets the inequalities
\bel{spe1} {f(u^-)\over u^-}~>~{f(u^+)\over u^+},\qquad  {g(u^-)\over u^-}~>~{g(u^+)\over u^+}\,.\eeq
For a jump where the flux is $f$ on the left and $g$ on the right, we have
$$ {f(u^-)\over u^-}~ >~{f(u^+) \over u^+}~>~{g(u^+)\over u^+}\,.$$
Finally, for a jump where the flux is $g$ on the left and $f$ on the right, the strict 
concavity of $g$ and the admissibility  of the shock imply
\bel{sadm}{g(u^-)\over u^-}~>~g'(u^-) ~\geq~{f(u^+)-g(u^-)\over u^+-u^-} \,.\eeq
Using (\ref{sadm}) we obtain
\bel{ses2} f(u^+)~=~{g(u^-)\over u^-}\cdot u^- + {f(u^+)-g(u^-)\over u^+-u^-}\cdot (u^+-u^-) ~ < ~{g(u^-)\over u^-} \bigl[ u^-+(u^+-u^-)\bigr], \eeq
hence
$${f(u^+)\over u^+}~<~{g(u^-)\over u^-}\,,$$
as claimed.
\v
 Next, consider a region where the flux is $f$. Here and in the sequel, first and second derivatives w.r.t.~time will be denoted by one and two upper dots, respectively.
Along a car trajectory $t\mapsto x(t)$, with 
$$\dot x(t)~=~v^f \bigl(u\bigl(t, x(t))\bigr)~=~  {f\bigl(u(t,x(t))\bigr)\over u\bigl(t, x(t)\bigr)}, $$ the acceleration  is computed by
$$\ddot x(t)~=~{d\over dt} \left({f(u(t,x(t)))\over u(t, x(t))}\right)~=~{f'(u) [u_t + \dot x u_x] u -
f(u) [u_t + \dot x u_x]\over u^2}\,.$$
Since $u_t = - f'(u) u_x$, we obtain
$$\ddot x~=~{f'(u) u - f(u)\over u^2}\cdot [\dot x - f'(u)] u_x~=~
{1\over u} \left( {f(u)\over u } - f'(u)\right)^2 \cdot (- u_x)\,.$$
An entirely similar formula holds when the flux is $g$. 
Recalling that  at every time $t\geq 0$ the total amount of cars over the interval $[0,L]$ is 
$u_0 L$,
the average acceleration is computed by
\bel{acac}A^{avg} (t)~=~-{1\over  u_0 L} \left[  \int_{\theta=1} \left({f(u)\over u} - f'(u)\right)^2  u_x\, dx+ \int_{\theta=0} \left( {g(u)\over u} - g'(u)\right)^2 u_x\, dx\right].\eeq
It is understood that all integrals range within one spatial period $[0,L]$.  
We can now prove
\begin{proposition}\label{p:32} In the above setting, the expected value of the average acceleration
satisfies the bound
\bel{ExAc}
E\bigl[A^{avg}(t)\bigr]~
\le~
\frac{\alpha^3\beta^2}{u_0 L}
\left(1+e^{\lambda L/\alpha\beta^*}\right).
\eeq
\end{proposition}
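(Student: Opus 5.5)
We start from the formula (\ref{acac}) for the average acceleration. Since $-u_x\geq 0$ wherever the profile is differentiable, and the shocks contribute nothing to (\ref{acac}), the integrand is nonnegative. The plan is to bound the two weights $\bigl(f(u)/u - f'(u)\bigr)^2$ and $\bigl(g(u)/u-g'(u)\bigr)^2$ pointwise by a constant $C$ independent of $u\in\,]0,\alpha[\,$. This gives
\[
A^{avg}(t)~\leq~{C\over u_0L}\int_0^L |u_x(t,x)|\,dx~=~{C\over 2u_0L}\,\TV\bigl\{u(t,\cdot)\bigr\},
\]
where we used (\ref{TV1}): for admissible profiles the total variation equals twice the integral of $|u_x|$. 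Taking expectations and invoking Proposition~\ref{prop:41} then yields
\[
E\bigl[A^{avg}(t)\bigr]~\leq~{C\over 2u_0L}\cdot 2\alpha\bigl(1+e^{\lambda L/\alpha\beta^*}\bigr)~=~{C\alpha\over u_0L}\bigl(1+e^{\lambda L/\alpha\beta^*}\bigr).
\]
It therefore suffices to show $C=\alpha^2\beta^2$.

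For $g$ the bound is an explicit computation. From $g(u)=\frac{\beta}{2}u(\alpha-u)$ we get $g(u)/u=\frac\beta2(\alpha-u)$ and $g'(u)=\frac\beta2(\alpha-2u)$. Hence $g(u)/u-g'(u)=\frac{\beta}{2}u$, whose square is at most $\beta^2\alpha^2/4\leq\alpha^2\beta^2$.

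For $f$ we write
\[
{f(u)\over u}-f'(u)~=~{1\over u}\int_0^u\bigl(f'(s)-f'(u)\bigr)\,ds~\geq~0 .
\]
By concavity, $f'(s)-f'(u)\leq f'(0)-f'(u)\leq f'(0)-f'(\alpha)$ for $s\leq u$. Using (\ref{fprop}), $f'(0)=g'(0)=\alpha\beta/2$ and $f'(\alpha)=g'(\alpha)=-\alpha\beta/2$, so $f'(0)-f'(\alpha)=\alpha\beta$. Therefore $0\le f(u)/u-f'(u)\le \alpha\beta$, and the square is at most $\alpha^2\beta^2$. Note that no bound on $f''$ from below is needed; the tangency conditions at the endpoints do all the work. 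This is the step I expect to be the main point: recognizing that the matching derivatives $f'(0)=g'(0)$, $f'(\alpha)=g'(\alpha)$ give a uniform bound expressed in terms of $\beta$ alone.

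Combining the two bounds, $C=\alpha^2\beta^2$ works for both fluxes, and the estimate above gives exactly (\ref{ExAc}). A minor technical point is to justify dropping shock contributions. The acceleration of a car crossing a shock is a negative impulse, by the speed-decrease inequalities (\ref{spe1})--(\ref{ses2}), so omitting these contributions can only increase the average. Formula (\ref{acac}) as written already accounts only for the smooth part, so it is an upper bound in either case.
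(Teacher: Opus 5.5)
Your proof is correct and follows essentially the same route as the paper. In both, the weights $\bigl(\psi(u)/u-\psi'(u)\bigr)^2$, $\psi\in\{f,g\}$, are bounded uniformly by $(\alpha\beta)^2$ using $f'(0)=g'(0)=\alpha\beta/2$ and $f'(\alpha)=g'(\alpha)=-\alpha\beta/2$; then $\int_0^L|u_x|\,dx$ is replaced by $\frac12\TV\{u(t,\cdot)\}$ and Proposition~\ref{prop:41} is applied. The only difference is cosmetic: your representation $f(u)/u-f'(u)=\frac1u\int_0^u\bigl(f'(s)-f'(u)\bigr)\,ds$ gives the two-sided bound $0\le f(u)/u-f'(u)\le\alpha\beta$ in one step, where the paper uses the separate estimates $0\le f(u)/u\le f'(0)$ and $f'(u)\ge f'(\alpha)$.
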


{\bf Proof.} Setting 
$$
M\,\doteq\,
\max_{\psi\in\{f, g\}} ~\sup_{u\in[0,\alpha]}
\left(
\frac{\psi(u)}{u}-\psi'(u)
\right)^2,$$
we have
\bel{acbd}
A^{avg}(t)
~\le~
\frac{M}{u_0 L}
\int_0^L |u_x|\,dx ~\le~ \frac{M}{2u_0 L}TV\bigl\{u(t,\cdot)\bigr\}.
\eeq
An upper bound on $M$ can be obtained observing that, by the assumptions {\bf (A1)}, 
for every $u\in [0,\alpha]$ one has
$$0\leq {f(u)\over u}\leq f'(0) = {\alpha\beta\over 2},\qquad\qquad 0\leq {g(u)\over u}\leq g'(0) = {\alpha\beta\over 2}\,,$$
$$
f'(u)\geq f'(\alpha) =- {\alpha\beta\over 2}\,,\qquad g'(u)\geq g'(\alpha)=- {\alpha\beta\over 2}\,.$$
Hence $M\leq (\alpha\beta)^2$.
By (\ref{acbd}) this yields
\bel{AvAc}
A^{avg}(t)~
\le~ \frac{(\alpha\beta)^2}{2u_0 L}TV\bigl\{u(t,\cdot)\bigr\}.
\eeq

Taking expectations and recalling (\ref{P2}), by (\ref{TV5})  we obtain (\ref{ExAc}).
\endproof

\section{A 2-dimensional set of profiles}
\label{sec:6}
\setcounter{equation}{0}

Let $\bar u\in \S$ be an initial data
within the set of admissible profiles introduced in Definition~\ref{d:22}.
Aim of this section is to prove that, with positive probability, 
after some fixed time $T$ the solution $u(T,\cdot)$ contains one single upward jump, and
 lies within a family of functions depending only on 2 parameters.

\begin{lemma} \label{l:61} There exists constants $T_1, T_2, T_3>0$, with $T_2=1$, such that the following holds. 
Let $u=u(t,x)$ be a solution of (\ref{fgclaw}) with initial data $u(0,\cdot)=\bar u\in \S$. Assume that:

\begi
\item During the time interval $[0, ~T_1]$ no new spike is generated.
\item During the time interval $[T_1,~ T_1+T_2]$ exactly one new  spike is generated.
\item During the time interval $[T_1+T_2, ~T_1+T_2+T_3]$ no new spike is generated.
\endi
Then (see Figures~\ref{f:df116} and \ref{f:df117})  at the terminal time $T=T_1+T_2+T_3$, the function $u(T,\cdot)$ contains one single
upward jump.  

Moreover, the set of all profiles $u(T,\cdot)$ that can be obtained in this way is a 2-dimensional manifold.
\end{lemma}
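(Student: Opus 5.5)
The plan is to run the three phases one after the other and show that each phase wipes out the memory of the previous one.

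\textbf{Phase 1, the interval $[0,T_1]$.} Here no spike is generated, so the solution evolves under the deterministic dynamics of Theorem~\ref{t:21}. Every $g$-interval, where $u_x=-1/(\beta(t-t_i))$, switches to $f$ within time $\Hat\tau=1/(\beta\delta)$. After that, by (\ref{zdot}), every negative gradient satisfies $|u_x|\le 1/(\beta^*(t-t_i-\Hat\tau))$.

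Choose $T_1$ large, depending only on $L,\alpha,\beta^*,\beta,\delta$ and not on $\bar u$. Then two things hold at time $T_1$:
\begin{itemize}
\item the flux is $f$ everywhere and $u_x>-\delta$ at every point;
\item the solution is a periodic entropy solution of $u_t+f(u)_x=0$.
\end{itemize}
To get a uniform time for the second point, I would apply the classical decay of periodic solutions with concave flux (the Oleinik bound, as in (\ref{TVt})). This gives $\TV\{u(t,\cdot)\}\le 2L/(\beta^*(t-\Hat\tau))$, so all shocks have strength $\O(1)/t$. Hence $\|u(T_1,\cdot)-u_0\|_{\L^\infty}$ is as small as we like, uniformly in $\bar u$.

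\textbf{Phase 2, the interval $[T_1,T_1+1]$.} Let the unique spike be created at time $s\in[T_1,T_1+1]$ and point $X\in[0,L]$. Since $u_x>-\delta$ everywhere, the spike is admissible. By Lemma~\ref{l:21} it produces the rarefaction $\alpha/2-(x-X)/(\beta(t-s))$, bounded by two shocks whose speeds are close to $\lambda^\pm$ as in (\ref{lpm}). Because the background is nearly $u_0$, the two shocks move apart at rate roughly $\lambda^--\lambda^+$.

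\textbf{Phase 3, the interval $[T_1+1,T]$.} Choose $T_3$ so large that the following happens before time $T$:
\begin{enumerate}
\item the expanding spike, together with the $f$-dynamics after its switch time $s+\Hat\tau$, absorbs all the small pre-existing shocks;
\item it wraps around the period, so that the shocks $y'$ and $y''$ meet through periodicity and merge into one.
\end{enumerate}
Each merger of upward shocks keeps a single upward jump. The existing shocks are weak, of size $\O(1/T_1)$, while the big shock has strength of order $w^+-w^-$, so a big shock cannot split. Hence at time $T$ exactly one jump remains. The key quantitative point is that the big shock sweeps every point of the circle within a time bounded independently of $\bar u$.

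\textbf{The two-dimensional claim.} I would argue that the profile $u(T,\cdot)$ depends only on the pair $(s,X)$, up to a vanishing error from the initial data. By translation invariance, $X$ only shifts the profile. The parameter $s$ fixes the age $T-s$ of the wave. After the spike has absorbed everything, the solution is exactly the $f$-evolution of a single-shock N-wave built from the rarefaction profile. Part of that solution is generated by a characteristic fan determined by $(s,X)$, and conservation of mass (P2) fixes the remaining constant.

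The main obstacle is to make "memory erased" exact rather than approximate. Small residual variations of $\bar u$ outside the fan could survive as non-generic $f$-characteristics and keep the profile from lying on a 2-parameter family. I would handle this by showing that at time $T$ every point of $[0,L]$ lies on a characteristic issuing from the spike's rarefaction fan at the switch time $s+\Hat\tau$. All older characteristics must already have entered the shock, which requires $T_3$ large compared with $L$ divided by the minimal relative characteristic–shock speed. Then $u(T,\cdot)$ is an explicit smooth function of $(s,X)$ alone. I would finish by checking that the map $(s,X)\mapsto u(T,\cdot)$ is an immersion: the shift in $X$ and the change of age in $s$ give independent derivatives.
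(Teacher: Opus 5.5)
Your proposal is correct and follows essentially the same route as the paper. You make $u(T_1,\cdot)$ nearly constant with flux $f$ everywhere using the genuine-nonlinearity (Oleinik-type) gradient bound, let the single spike expand until its two large shocks meet through periodicity so that every point at time $T$ lies on an $f$-characteristic issuing from the fan $\alpha/2-\delta(x-X_0)$ at time $t_0+\Hat\tau$, and locate the single shock by mass conservation, so that $u(T,\cdot)$ depends only on $(t_0,X_0)$; the paper's extra pieces (a sector argument controlling the shock states during the $g$-phase, and a case split on whether the shocks merge before or after $t_0+\Hat\tau$) are details your formulation either glosses or subsumes, and your immersion check is a small addition the paper leaves implicit.
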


\begin{figure}[ht]
\centerline{\hbox{\includegraphics[width=13cm]{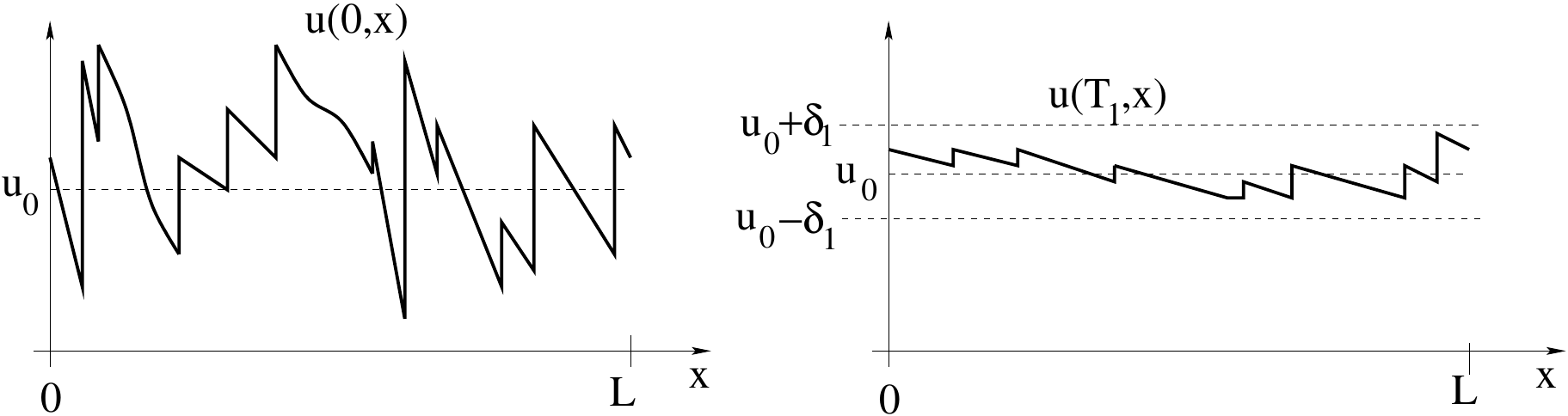}}}
\caption{\small  Left: an initial condition $u(0,\cdot)\in \S$.  Right: at time $T_1$, the negative gradient has decayed enough so that $-\delta< u_x<0$ at a.e.~$x\in [0,L]$.   Hence the flux is $f$ and spikes can be generated anywhere.  }
\label{f:df116}
\end{figure}

\begin{figure}[ht]
\centerline{\hbox{\includegraphics[width=13cm]{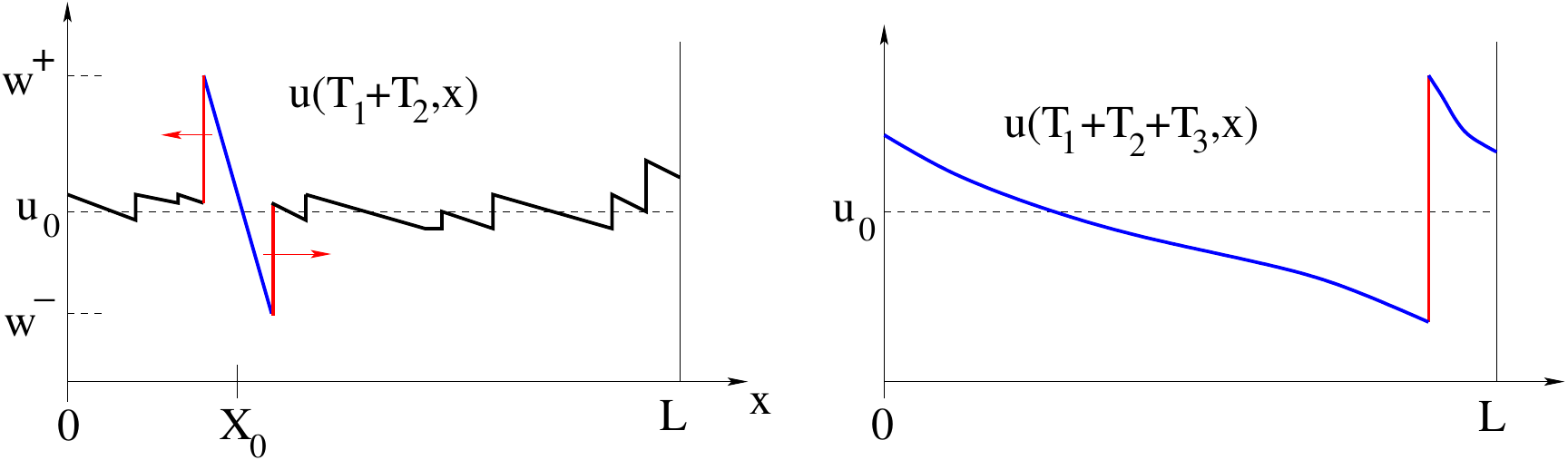}}}
\caption{\small  Left: a new spike is generated at time $t_0\in [T_1, T_1+T_2]$, at a random point $X_0$.  For $t>t_0$, the spike keeps expanding.   Right: after enough time,  the two large upward jumps join together (after one of them crosses the boundary and re-enters through the opposite side),
and the solution contains a single shock.}
\label{f:df117}
\end{figure}

{\bf Proof.}
{\bf 1 (Choosing $T_1$ large enough, $u(T_1,\cdot)$ is nearly constant).}
Fix a  constant $\delta_1>0$ small enough, whose precise value will be determined later.
Then choose
\bel{T1def}
T_1~>~
\max\left\{
\frac1{\beta^*\delta}\,,~
\frac{L}{\beta^*\delta_1}
\right\}.
\eeq
Observe that, if no new spike is generated during $[0,T_1]$, then by (\ref{fgn})
the genuine nonlinearity of the fluxes $f,g$ yields
$$-{1\over\beta^* t}~\leq~u_x(t,x)~\leq~0,\qquad\qquad\forall t\in [0, T_1] ~\hbox{and a.e.~}x\in [0,L].$$
In particular,  at time $t=T_1$ the choice (\ref{T1def}) yields
\bel{T11} -\delta\,<\,u_x(T_1,x)\,\leq\,0,\qquad u_x(T_1,x)~>\, -{\delta_1\over L}
\qquad\hbox{for a.e.~}x\in [0,L].\eeq
In turn, this implies
\bel{T12}\TV\bigl\{ u(T_1,\cdot)\bigr\} ~<~2\delta_1\,,\qquad\qquad 
\bigl| u(T_1,x)- u_0\bigr|\,<\, \delta_1\qquad\forall x\in [0,L]\,.\eeq
We have use here the fact that $u(t,\cdot)$ is $L$-periodic and its average value is $u_0$,  for every $t\geq 0$.
\v
{\bf 2.} Next, we choose $T_2=1$ and let $t_0\in [T_1, ~T_1+T_2]$ the time when the new spike is generated, say at a point $X_0\in [0, L]$.   Call $\bar u_0 = u(t_0, X_0)$.
By (\ref{T12}) it follows
\bel{T15}\bigl| u(t,x)- u_0\bigr|< \delta_1\qquad\forall x\in [0,L]\,,~~T_1\leq t\leq t_0\,,\eeq
and hence
\bel{T13} |\bar u_0-u_0\bigr|~<~\delta_1\,.\eeq

With reference to Fig.~\ref{f:df136}, 
call  $y'(t)<  y''(t)$ the positions of the two new upward shocks, and 
denote by
$$u^-(t)~\doteq~u(t, y''(t) -)  ~<~ u^+(t) ~\doteq~u\bigl(t, y'(t) +\bigr)$$ 
the states next to these two shocks.
Recalling (\ref{gprop}), the gradient of the solution is
$$u_x(t,x)~=~{-1\over \beta (t-t_0)}~\leq~-\delta\,,\qquad\forall t\in [t_0, ~t_0+ \Hat \tau],~~
x\in \bigl]y'(t),\, y''(t)\bigr[\,.$$
As in (\ref{ti*}), here $\Hat \tau=1/\beta\delta$ is the time it takes for the gradient to decay from $-\infty$ up to $-\delta$.
Note that, by $L$-periodicity, a shock that crosses one of the boundaries re-enters through the opposite boundary.  

Setting $\Hat \tau_0\doteq t_0+\Hat \tau$, two cases will be considered.

{\bf CASE 1} (see Fig.~\ref{f:df136}, right): The two shocks at $y',y''$ merge into a single shock at a time $T^*\leq \Hat\tau_0$.  

In this case, at $t=T^*$ the function $u(T^*,\cdot)$ is piecewise affine, with a single jump
at the point $y'(T^*)=y''(T^*)$.  For $t\in [T^*, \Hat\tau_0]$, the function $u$ provides a solution to
the scalar conservation law $u_t+g(u)_x=0$, while  for $t\in [\Hat\tau_0, T]$, the function $u$ is a solution to $u_t+f(u)_x=0$.   In both cases, $u(t,\cdot)$ is piecewise $\C^1$, with one single shock
at a point $y(t)\in [0,L[\,$, and $u_x(t,x)<0$ for $x\not= y(t)$.

{\bf CASE 2} (see Fig.~\ref{f:df136}, left): The two shocks at $y',y''$ remain separated for all $t_0< t \leq  \Hat\tau_0$.   This requires a more careful analysis.  We claim that
\begi
 \item[(1)] Inside the interval $\bigl[ y'(\Hat\tau_0),\, y''(\Hat\tau_0)\bigr]$ the function 
 $u(\Hat \tau_0,\cdot)$ is affine, while outside it satisfies $\bigl|u(\Hat\tau_0,x ) - u_0\bigr|\leq\delta_1$.
\item[(2)] The function $u(\Hat \tau_0,\cdot)$  has  two large jumps
at the points $y'(\Hat\tau_0)$, $y''(\Hat\tau_0)$, with
$$u\bigl(\Hat\tau_0,\, y'(\Hat\tau_0)+\bigr)-w^+(u_0)~\approx~0,\qquad\qquad
 u\bigl(\Hat\tau_0,\, y''(\Hat\tau_0)-\bigr)-w^-(u_0)~\approx~0.$$
 In fact,  the two above differences can be rendered arbitrarily small by choosing $\delta_1>0$  small enough.
\endi
Observing that for $t>\Hat\tau_0$ the function $u$ provides a solution to the scalar conservation law
\bel{fclaw} u_t+f(u)_x~=~0,\eeq choosing $T$ large enough, we will prove that $u(T,\cdot)$ contains a single shock.

\begin{figure}[ht]
\centerline{\hbox{\includegraphics[width=13cm]{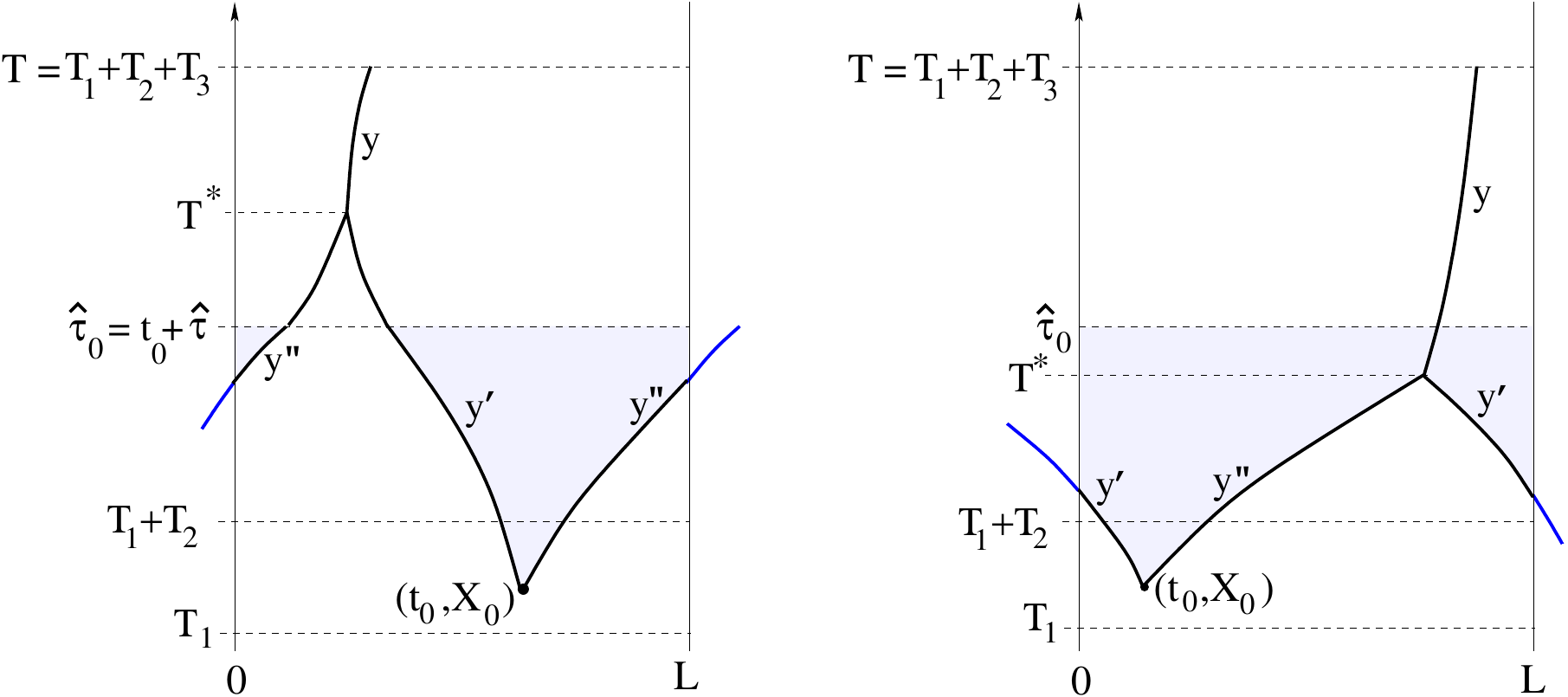}}}
\caption{\small As a new spike is generated at $(t_0, X_0)$, the flux function becomes $g(u)$ in the interval between the two shocks $y'(t)$ and $y''(t)$ (the shaded region).  Note that the flux switches back to $f$ at time $t_0+\Hat \tau$.  
The two figures show the two cases where the shocks merge into each other respectively 
after or before the time
$t_0+\Hat\tau$.  It is convenient to think of $u(t,\cdot)$ as prolonged outside $[0,L]$ by periodicity, so that a shock that crosses one of the boundaries re-enters the domain through the other boundary.}
\label{f:df136}
\end{figure}
%
%{\color{blue}
%To motivate the next step of the proof, we observe that the solution to (\ref{fgclaw}), with constant initial data
%$U(t_0,x)=u_0$ and with a spike generated at the point $(t_0, X_0)$, has the explicit form
%\bel{Us1}U(t,x)~=~\left\{ \bega{cl}u_0\quad &\hbox{if}\quad x\notin \bigl[y'(t),\, y''(t)\bigr],\\[2mm]
%\ds {\alpha\over 2} - {x-X_0\over\beta (t-t_0)} \quad &\hbox{if}\quad y'(t)<x< y''(t),\enda\right.\eeq
%\bel{Us2}y'(t)\,=\, X_0 + (t-t_0) g'\bigl( w^+(u_0)\bigr),\qquad\qquad y''(t)\,=\, X_0 + (t-t_0) g'\bigl( w^-(u_0)\bigr).\eeq
%As in Lemma~\ref{l:21}, here $Q^\pm\doteq \Big( w^\pm(u_0), f\bigl(w^\pm(u_0)\bigr)\Big)$ are the points where the lines through the point $P=\bigl( u_0, f(u_0)\bigr)$ meet the graph of $g$ tangentially
% (see Fig.~\ref{f:df131}, right).
%In particular, for this special solution the sizes of the the shocks remain constant:
%$$U^-(t)~\doteq~U(t, y''(t) -) ~= ~ w^-(u_0)~<~w^+(u_0)~=~ U^+(t) ~\doteq~U\bigl(t, y'(t) +\bigr).$$ 
%
%}
%
%

\begin{figure}[ht]
\centerline{\hbox{\includegraphics[width=10cm]{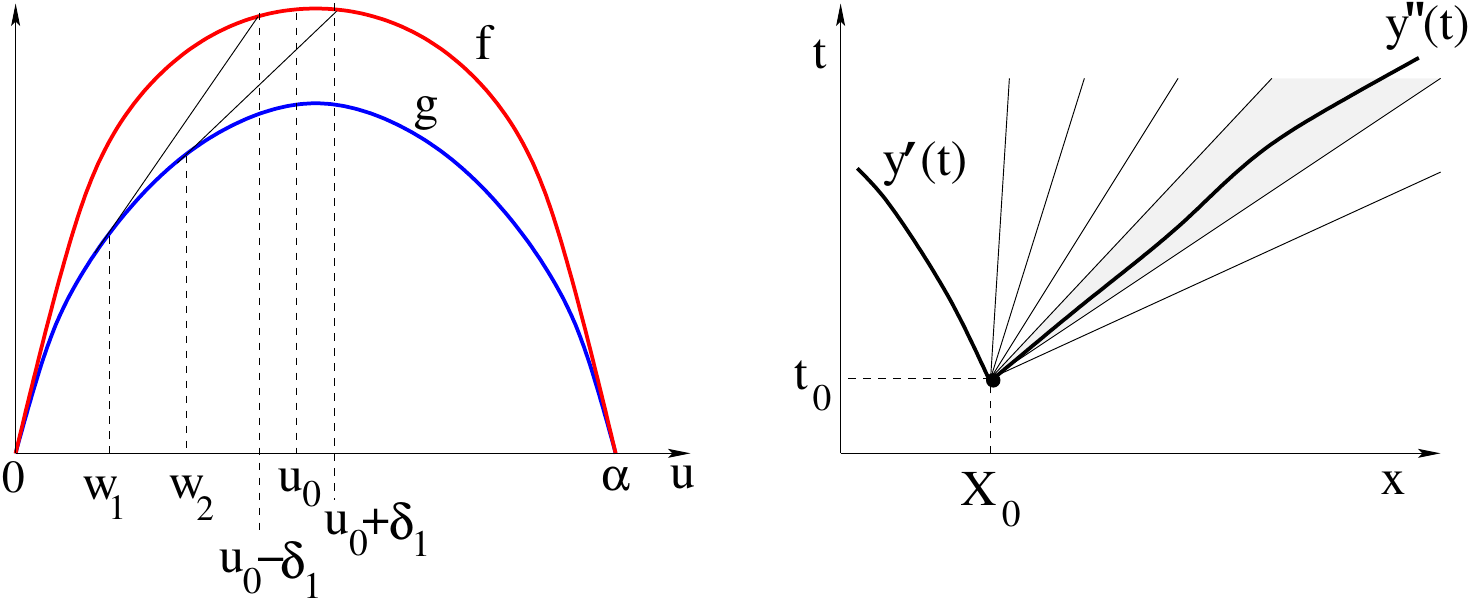}}}
\caption{\small  Left: the interval $[w_1, w_2]$ considered at (\ref{gp2}).
Right: the shock curve $y''(\cdot)$ remains inside the shaded region $\Omega$ defined at (\ref{Om}).}
\label{f:df134}
\end{figure}

\v
{\bf 3.} 
For  $t\in \,]t_0, \Hat \tau_0]$, the solution $u$ satisfies the relations
\bel{upr1}\left\{\bega{cl}\ds
g'\bigl(u(t,x)\bigr) ~=~{x-X_0\over t-t_0}\qquad &\hbox{if}\quad y'(t)<x<y''(t),\\[4mm]
\bigl| u(t,x) - u_0\bigr|~\leq~\delta_1\qquad &\hbox{otherwise.}\enda
\right.
\eeq
As shown in Fig.~\ref{f:df134}, left, consider the points on the graph of $f$ corresponding to $u=u_0\pm \delta_1$.   Call $w_1<w_2$ the values of $u$ at which the lines through
$\bigl( u_0\pm \delta_1,\, f(u_0\pm \delta_1)\bigr)$ touch the graph of $g$ tangentially.  Note that this implies
\bel{gp2}g'(u_0), g'(\bar u_0)~\in ~\bigl] g'(w_2), \, g'(w_1)\bigr[\,.\eeq

We claim that, for $t\in \,]t_0, \Hat \tau_0]$, the shock $y''(t)$ remains in the sector
\bel{Om} \Omega~\doteq~\left\{ (t,x)\,;~g'(w_2)\,<\,{x-X_0\over t-t_0} \,<\, g'(w_1)\right\},\eeq
 see Fig.~\ref{f:df134}, right.

Indeed, as in the proof of Lemma~\ref{t:21}, 
$$\lim_{t\to t_0+} {d\over dt} y''(t)~=~g'(\bar u_0)~\in ~\bigl] g'(w_2), \, g'(w_1)\bigr[\,.$$
Hence, our claim certainly holds for $[t_0, \, t_0+\epsilon[\,$ with $\epsilon>0$ small enough.
To complete the proof, we show that, if at some first time $t^*$ the shock $y''(\cdot)$ ever touched one of the boundaries of $\Omega$, it would be immediately pushed inward.

Assume by contradiction that at some first time $t^*>t_0$ we have 
$${y''(t^*) - X_0\over t-t_0}~=~g'(w_2),\qquad\hbox{hence}\quad u^-(t^*)= w_2\,.$$
Then the Rankine-Hugoniot speed of the shock is 
$${d\over dt} y''(t^*)~=~{f\bigl(u(t^*, y''(t^*)+)\bigr) - g(w_2)\over 
u\bigl(t^*, y''(t^*)+\bigr) - w_2}~>~{f(u_0+\delta_1) - g(w_2)\over 
u_0 +\delta_1 - w_2}~>~g'(w_2),$$
yielding a contradiction.

Similarly, assume that at some first time $t^*$ we have
$${y''(t^*) - X_0\over t-t_0}~=~g'(w_1),\qquad\hbox{hence}\quad u^-(t^*)= w_1\,.$$
Then the Rankine-Hugoniot speed is 
$${d\over dt} y''(t^*)~=~{f\bigl(u(t^*, y''(t^*))\bigr) - g(w_1)\over 
u(t^*, y''(t^*)) - w_2}~<~{f(u_0-\delta_1) - g(w_1)\over 
u_0 -\delta_1 - w_2}~<~g'(w_1),$$
which again yields a contradiction.

By choosing $\delta-1>0$ small enough, we can thus assume that 
the left state across the shock $y''(t)$ satisfies
$$u^-(t)~=~u\bigl(t, y''(t)-\bigr)~\in~]w_1,w_2[\,,\qquad \forall~t\in \,]t_0, \Hat \tau_0], $$
where $]w_1,w_2[$ is an arbitrarily small neighborhood of $w^-(u_0)$.

By a similar argument, we can assume that 
the right state across the shock $y'(t)$ satisfies
$$u^+(t)~=~u\bigl(t, y'(t)+\bigr)~\in~]w_3,w_4[\,,\qquad \forall~t\in \,]t_0, \Hat \tau_0], $$
where $]w_3,w_4[$ is an arbitrarily small neighborhood of $w^+(u_0)$.

\v
{\bf 4.}  In {\bf CASE 2}, by the previous step,  the profile $u(\Hat\tau_0,\cdot) $
satisfies
\bel{Htu1}  u(\Hat \tau_0, x)~=~\ds
{\alpha\over 2} - {x-X_0\over\beta (\Hat \tau_0-t_0)}~=~ {\alpha\over 2} - \delta(x-X_0)\qquad \hbox{if}\qquad 
x\in \,\bigl]y'(\Hat \tau_0),\,y''(\Hat \tau_0)\bigr[\,,\eeq
\bel{Htu2}
\bigl|u(\Hat \tau_0, x)- u_0\bigr|~<~\delta_1\quad \hbox{if}\quad x\notin \bigl[y'(\Hat \tau_0),\,y''(\Hat \tau_0)\bigr].\eeq
Moreover, for any given $\delta_2>0$, we can choose $\delta_1>0$ small enough so that
\bel{Htu3}\bega{rl} \Big| y'(\Hat\tau_0) -X_0 -  (\Hat\tau_0-t_0) g'\bigl( w^+(u_0)\bigr)\Big|&<~\delta_2\,,\\[2mm]
\Big| y''(\Hat\tau_0) -X_0 - (\Hat\tau_0-t_0) g'\bigl( w^-(u_0)\bigr)\Big|&<~\delta_2\,.\enda\eeq
\bel{Htu4} \Big|u\bigl(t, y'(\Hat\tau_0)+\bigr) - w^+(u_0)\Big|~<~\delta_2\,,\qquad\qquad 
 \Big|u\bigl(t, y''(\Hat\tau_0)-\bigr) - w^-(u_0)\Big|~<~\delta_2\,.\eeq

\begin{figure}[ht]
\centerline{\hbox{\includegraphics[width=14cm]{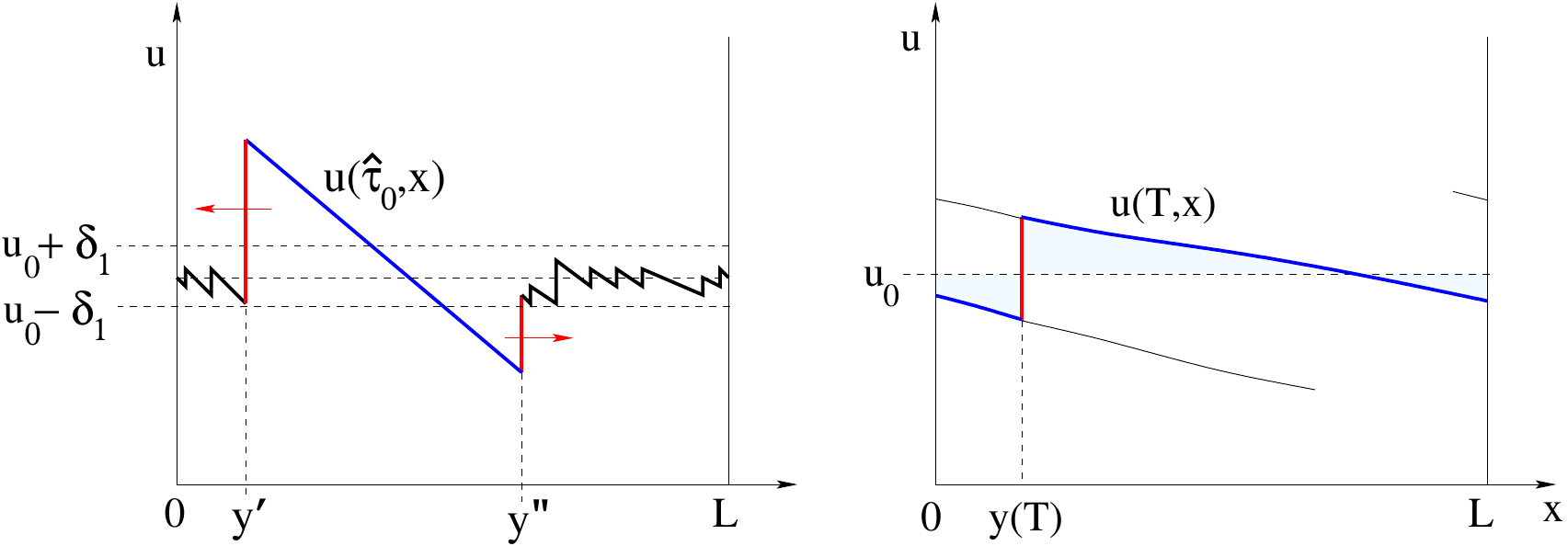}}}
\caption{\small Left: the solution $u((\Hat \tau_0, \cdot)$ at the time 
when the flux switches from $g$ to $f$ 
inside the region enclosed between the two large shocks.  Right: at a later time $T$, the solution contains a single shock, and is $\C^1$ everywhere else. Note that, while the 
implicit equation (\ref{utdef}) (adjusted by periodicity) yields a multivalued function 
on $[0,L]$, the location of the shock is uniquely determined by the identity (\ref{masscons}), accounting for the conservation of the total mass. }
\label{f:df135}
\end{figure}
\v{\bf 5.} Next, consider any solution $u=u(t,x)$ to 
 the scalar conservation law (\ref{fclaw}), with initial data 
 $u(\Hat \tau_0,\cdot)$ assigned on the entire real line, satisfying (\ref{Htu1})--(\ref{Htu4}).
 By the standard theory of scalar conservation laws, always assuming $\delta_1>0$ sufficiently small,
 there exists a uniformly bounded time $T^*>\Hat\tau_0$ at which the two large shock satisfy
$$y''(T^*) - y'(T^*)~=~L.$$
For our original solution $u$, which is $L$-periodic in space,
this means that at time $T^*$ the two shocks merge into a single one (see again Fig.~\ref{f:df136}, left).
\v
{\bf 6.} To prove the last statement of the lemma,
let $U(t,x)$ be the solution of (\ref{fgclaw}) with  a single spike generated at $(t_0, X_0)$, but with 
constant initial data:
\bel{U0}U(0,x)~=~u_0\qquad \qquad\forall x\in [0,L].\eeq
For $t>t_0$, denote by $Y'(t)<Y''(t)$  the two large shocks in $U(t,\cdot)$.

Of course, the two solutions $u,U$ will be different.   However,
in the 
common domain 
 between the two shocks, they coincide. In particular, for $t\in [t_0, \Hat\tau_0]$ we have
 \bel{U=u}   U(t,x)~=~u(t,x)~=~{\alpha\over 2} - {x-X_0\over\beta (t-t_0)}\qquad\qquad\forall x~\in~\bigl]y'(t),\, y''(t)\bigr[\,\cap \,\bigl]Y'(t),\, Y''(t)\bigr[\,.\eeq

Next, recalling that $\Hat\tau_0=t_0+\Hat\tau$  with $\Hat\tau\doteq 1/\beta\delta$,  
consider the solution $u(t,\cdot)$ for $t>\Hat \tau_0$.
The region
enclosed between the two shocks $y'(t)<y''(t)$  is now covered by $f$-characteristics  which at time $ \Hat \tau_0$
start inside the interval $\bigl[ y'( \Hat \tau_0), \, y''( \Hat \tau_0)\bigr]$.
Hence $u(t,\cdot)$ is implicitly determined by
\bel{utdef}
u\bigg(t,~x+ (t-t_0-\Hat\tau) f'\Big(   {\alpha\over 2} -\delta(x-X_0)\Big)\bigg) \,=~ {\alpha\over 2} -\delta(x-X_0),\qquad\quad \hbox{for}\qquad x\in \,\bigl] y'(t),\,y''(t)\bigr[\,.
\eeq
Notice that, for $ x\in \,\bigl] Y'(t),\,Y''(t)\bigr[\,$, the solution $U(t,x)$ is determined exactly by the same implicit formula (\ref{utdef}).

Finally (see Fig.~\ref{f:df136}), after the two shocks merge into a single one (by $L$-periodicity), for $t\geq T^*$ on the whole domain $[0,L]$ both solutions are 
determined by the equation (\ref{utdef}).   At first sight, they may differ for the locations of the 
single shocks $y(t)$ and $Y(t)$.   However, this is not the case, because 
%
%
%
%we see that the function
%$u(t,\cdot)$ at (\ref{utdef}) depends only on two parameters: $t_0 $ and $X_0$.   The initial data $u(\Hat\tau_0,\cdot)$  affect only the location of the two shocks $y'(t), y''(t)$.
%
%
%
%Note that, as the function implicitly defined at (\ref{utdef}) is prolonged by
%$L$-periodicity, it becomes multivalued (see Fig.~\ref{f:df135}, right).  A unique
%single-valued solution is recovered by imposing 
the conservation of total mass:
\bel{masscons} \int_0^L u(T,x)\, dx ~=~\int_0^L U(T,x)\, dx~=~L u_0\,.\eeq
 uniquely determines the position $y(t)=Y(t)$ of the shocks present in $u(t,\cdot)$ and $U(t,\cdot)$, 
 respectively.
 
The above analysis shows that, regardless of the initial data $\bar u\in\S$,
at the  fixed terminal time $T$,  the solution $u(T,\cdot)$ is implicitly defined by 
the identity 
\bel{uTdef}
U\Big(x+ (T-t_0-\Hat\tau) f'\bigl(   {\alpha\over 2} -\delta(x-X_0)\bigr)\Big) ~=~ {\alpha\over 2} -\delta(x-X_0).
\eeq
In view of the identity (\ref{masscons}) that uniquely determines the location of the shock,
these functions depend only on two parameters: the time $t_0$ and the position $X_0$.
\endproof

\begin{remark}\label{r:61}{\rm
For future use, let again $T=T_1+1+T_3$ be as in Lemma~\ref{l:61}.  
On the rectangle $\Gamma= [T_1, \, T_1+1]\times [0,L]$, consider the probability measure $\Tilde \nu$
defined as a constant multiple of  Lebesgue measure. Let $\S$ be the family of all admissible
profiles, introduced in Definition~\ref{d:22}.
Define the map $\Lambda:\Gamma\mapsto \S$  by setting
$$\Lambda(t_0, X_0) ~=~U^{(t_0, X_0)}(T,\cdot).$$
Here $U^{(t_0, X_0)}$ denotes the solution to (\ref{fgclaw}) with constant initial data (\ref{U0})
and containing a single $N$-shaped spike generated at $(t_0, X_0)$.
Denoting by 
\bel{calM}
\M~\doteq~\Big\{ U^{(t_0, X_0)}(T,\cdot)\,;~~ t_0\in [T_1, \, T_1+1],~~X_0 \in  [0,L]\Big\}~\subset~\S
\eeq
the image of this map,
the push-forward of $\Tilde \nu$ by $\Lambda$
yields a probability measure $\nu(\cdot)$ on $\M$. 
}
\end{remark}

\section{Bounds on the number of shocks}
 \label{sec:7}
\setcounter{equation}{0}
In this section we consider a random solution
$t\mapsto u(t,\cdot)$  of (\ref{fgclaw}), (\ref{id}), as constructed in Section~\ref{sec:2}, and call $N(t)$ the corresponding number of shocks (i.e., upward jumps).   Since more and more new spikes are generated, in principle
this number may increase without bound.   However, shocks merge into each other, thus decreasing their total number.   Goal of this section is to derive an upper bound on the expected number of shocks, independent of time.  

\begin{proposition} Let the times $T_1, T_2, T_3$ be as in Lemma~\ref{l:61}, and set
\bel{TEN} T\,\doteq\, T_1+T_2+T_3,\qquad\qquad  \rho\,\doteq\,e^{-\lambda T_1} \cdot \lambda T_2 e^{-\lambda T_2}\cdot e^{-\lambda T_3}\,=\, \lambda e^{-\lambda T}.\eeq
Then, at  any time $t\geq 0$, the expected number of jumps in a
random solution $u(t,\cdot)$ is bounded by
\bel{ENt} E\bigl[N(t)\bigr]~\leq~
\frac{\rho+2\lambda T}{\rho}\,.\eeq
\end{proposition}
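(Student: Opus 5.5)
We split time into consecutive blocks of length $T$ and use Lemma~\ref{l:61} as a regeneration event. Fix $t\geq 0$. The only way to create shocks is to insert a spike, and each spike creates exactly two new shocks. Merging can only decrease $N(t)$, and switching the flux from $g$ to $f$ on an interval creates no jump. Hence for $s<t$,
$$N(t)~\leq~N(s)+2\cdot\#\{j\,;~s<t_j\leq t\}.$$

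Next we look backward from $t$ over the intervals $I_m\doteq[t-mT,\,t-(m-1)T]$, $m=1,2,\ldots$, as long as $t-mT\geq 0$. Call $I_m$ \emph{good} if the Poisson arrivals in it have the pattern required by Lemma~\ref{l:61}: none in the first subinterval of length $T_1$, exactly one in the next of length $T_2=1$, none in the last of length $T_3$. By the independent-increments property, the events "$I_m$ good" are independent. Each has probability $e^{-\lambda T_1}\cdot\lambda T_2e^{-\lambda T_2}\cdot e^{-\lambda T_3}=\rho$ as in (\ref{TEN}).

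Lemma~\ref{l:61} applies with any initial datum in $\S$, and $u(s,\cdot)\in\S$ for all $s$ by Theorem~\ref{t:21}. There is one subtlety: in a good interval the single arrival may fall where $u_x\leq-\delta$, so that no spike is generated. This only helps, since the same decay argument as in step~1 of the lemma's proof then still gives a nearly constant, or at most one-shock, profile. We must check this point, and if necessary treat "at most one spike" in the lemma. It then follows that, if $I_m$ is good, $N(t-(m-1)T)\leq 1$.

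Let $M$ be the smallest $m$ such that $I_m$ is good, with the convention that if no good interval exists before time $0$ we use $N(0)=0$ instead, since the initial datum is constant. Applying the counting inequality from the end of $I_M$ to $t$ gives
$$N(t)~\leq~1+2\cdot\#\{j\,;~t_j\in[t-(M-1)T,\,t]\}.$$
The number of arrivals in the $M-1$ bad intervals has to be estimated using the conditioning. This is the main obstacle, because conditioning on an interval being bad biases its arrival count upward. We handle it with Wald's identity. Writing $K_m$ for the count in $I_m$, we have $E\bigl[\sum_{m<M}K_m\bigr]\leq E[M]\cdot E[K_1]=E[M]\,\lambda T$, since $\{M>m\}$ depends only on $I_1,\dots,I_m$, i.e.\ $M$ is a stopping time for the block sequence. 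Alternatively we bound $E[K_m\mid I_m\ \hbox{bad}]\leq\lambda T/(1-\rho)$ and multiply by $E[M-1]\leq(1-\rho)/\rho$.

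Since $M$ is geometric with parameter $\rho$ (truncated at time $0$, which only decreases it), $E[M]\leq 1/\rho$. Combining the pieces,
$$E[N(t)]~\leq~1+2\lambda T\,E[M]~\leq~1+\frac{2\lambda T}{\rho}~=~\frac{\rho+2\lambda T}{\rho},$$
which is (\ref{ENt}). The key care points are the Wald/stopping-time step, and verifying that a good block yields at most one shock even when the single arrival fails to produce a spike.
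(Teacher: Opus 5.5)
Your argument is correct and gives the same constant, but it is organized differently from the paper's.

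\textbf{Comparison of routes.} The paper runs a forward drift recursion. It conditions on whether the single block $[t,t+T]$ has the $0,1,0$ arrival pattern, and uses that $N(t)$ is independent of this event. This yields $E[N(t+T)]\le\rho+(1-\rho)E[N(t)]+2\lambda T$, whose fixed point is exactly $(\rho+2\lambda T)/\rho$. The proof then closes by induction over blocks, starting from $E[N(t)]\le 2\lambda T$ on $[0,T]$.

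You instead look backward for the most recent good block. This gives the pathwise bound $N(t)\le 1+2\sum_{m\le M}K_m$, with $M$ a stopping time dominated by a geometric variable, and you then apply Wald. Unrolling the paper's recursion produces the same geometric series, so the two proofs are ``drift inequality plus induction'' versus ``regeneration plus stopping time''. The paper's route is shorter, and its recursion is a drift inequality of the form \textbf{(C1)} with $V=N$, $\gamma=1-\rho$. Yours gives a pathwise domination that controls the law of $N(t)$ (e.g.\ higher moments, uniformly in $t$), not only its mean.

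\textbf{Bookkeeping in the Wald step.} The relevant fact is that $\{M\ge m\}$ is determined by $I_1,\dots,I_{m-1}$, hence is independent of $K_m$. This bounds $E[\sum_{m\le M}K_m]$ by $\lambda T\,E[M]$. In the truncated case you really need the sum up to $m=M$, including the leftover partial block $[0,\,t-\lfloor t/T\rfloor T]$. Your interval $[t-(M-1)T,t]$ omits the arrivals in that block.

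\textbf{The no-spike case.} Your sketch suggests that if the single arrival in a good block produced no spike, the profile would be ``nearly constant'' and hence harmless. That does not work: a profile of small total variation can still carry many small shocks, so it gives no bound on $N$. The correct observation is that this case has probability zero. If no spike occurs during the first $T_1$ units of the block, step~1 of the proof of Lemma~\ref{l:61} gives $u_x\ge -1/(\beta^* t)>-\delta$ wherever $u_x$ is defined, for all block times $t\ge T_1$ up to the arrival. So the arrival at $(t_0,X_0)$ fails to create a spike only if $X_0$ lands on one of the finitely many shock locations. That is a null event, because $X_0$ is uniform and independent of the past. The paper uses this identification of ``one arrival'' with ``one spike'' implicitly.
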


{\bf Proof.} {\bf 1.} For any $t\geq 0$ and $n\geq 1$ we have the bound on the conditional expectation
\bel{CEB}
E\Big[ N(t+T)~\Big|~N(t)=n\Big]~\leq~n+ {2\lambda T}\,.\eeq
Indeed, the expected number of random times $t_j\in [t, t+T]$ when a new spike
can be generated
is no larger than $2\lambda T$. Since each spike contains two shocks, this 
yields (\ref{CEB}).  Notice that here we are not taking into account possible cancellations
due to merging of shocks.
\v
{\bf 2.} Denote by $P_{010}$ the event that the number of random times $t_j$ 
contained inside the time intervals
$$\bigl[t, ~t+T_1\bigr], \qquad \bigl[t+T_1,~t+T_1+T_2\bigr], \qquad \bigl[t+T_1+T_2,~t+T_1+T_2+T_3\bigr],$$
 is 0,1,0, respectively.
 Since these three events are mutually independent, the probability that $P_{010}$ occurs is
precisely the constant $\rho$ in (\ref{TEN}).

In this case, by Lemma~\ref{l:61} the function $u(t+T,\cdot)$ 
contains one single upward shock. 
\v
{\bf 3.}
We denote by $P_{010}^c$ the complement,
i.e.~the case where $P_{010}$ fails.  This event occurs with probability $1-\rho$.
We thus have
\bel{prob2}\bega{rl}
E\bigl[N(t+T)\bigr]&=~E\Big[ N(t+T)\,\Big|\, P_{010}\Big]\cdot \prob(P_{010})
+ E\Big[ N(t+T)\,\Big|\, P^c_{010}\Big]\cdot \prob(P^c_{010})\\[3mm]
&\leq~1\cdot \rho +  E\Big[ N(t+T)\,\Big|\, P^c_{010}\Big]\cdot (1-\rho).
\enda\eeq
\v
{\bf 4.} 
Let $K=\#\bigl\{j\geq 1\,;~~t_j\in [t, t+T]\bigr\}$ be the number of random times
occurring within the interval $[t,t+T]$.
Since at each $t_j$ at most two new shocks are generated, we have
\[
N(t+T)\,\leq\, N(t)+2K.
\]
Taking conditional expectations, one obtains
\bel{E3}
E\Big[N(t+T)\,\Big|\,P^c_{010}\Big]
~\leq~
E\Big[N(t)\,\Big|\,P^c_{010}\Big]
+
2E\Big[K\,\Big|\,P^c_{010}\Big].
\eeq
Observing that
$N(t)$ is independent of $P_{010}^c$, we have
\[
E\Big[N(t)\,\Big|\,P^c_{010}\Big]
=
E[N(t)].
\]
Moreover, since $N(t)\geq 0$, we have the bound 
\[
E\bigl[K\mid P^c_{010}\bigr]~
\leq~
\frac{E[K]}{\prob(P^c_{010})}~
=~
\frac{\lambda T}{1-\rho}.
\]
Using the two previous bounds in (\ref{E3}), we now obtain
\[
E\Big[N(t+T)\,\Big|\,P^c_{010}\Big]
~\leq~
E\bigl[N(t)\bigr]
+
\frac{2\lambda T}{1-\rho}\,.
\]
\v
{\bf 5.}
By (\ref{prob2}) it follows
\bel{prob5}
E\bigl[N(t+T)\bigr]
~\leq ~ \rho + \left( E\bigl[ N(t)\bigr] + \frac{2\lambda T}{1-\rho}\right)\cdot (1-\rho).\eeq
The bound (\ref{ENt}) can now be obtained by induction.
For $t\in [0,T]$ we have the easy bound
$$E\bigl[N(t)\bigr]~\leq~2 \,E\Big[ \#\bigl\{ j\geq 1\,;~~t_j\in [0,t]\bigr\}\Big]~\leq~2 \,E\Big[ \#
\bigl\{ j\geq 1\,;~~t_j\in [0,T]\bigr\}\Big]~=~2\lambda T.$$
Introduce the constant
\[
M\,\doteq \,\frac{\rho+2\lambda T}{\rho}\,,
\]
and assume that $E\bigl[N(t)\bigr]~\leq~M$ for $t\in [0, nT]$.
Then, for any $\tau\in \bigl[nT, (n+1)T\bigr]$, by (\ref{prob5}) we have
\[
 E\bigl[N(\tau)\bigr]~\leq~  \rho + \left( E\bigl[ N(\tau-T)\bigr] + \frac{2\lambda T}{1-\rho}\right)\cdot (1-\rho)~\leq~ \rho + \left( M + \frac{2\lambda T}{1-\rho}\right)\cdot (1-\rho)~=~M.
\]
By induction on $n$, this achieves the proof.
\endproof

\section{Ergodicity}
\label{sec:8}
\setcounter{equation}{0}
In this section we study the family of random solutions to (\ref{fgclaw}) 
from the point of view of general 
Markov processes.

We recall the main result from \cite{HM, H}. 
Fix a measurable space $\bfX$ and a Markov transition kernel $\P(x, \cdot)$ on $\bfX$. 
Assume that $\P$ satisfies the following conditions.
\begi
\item[{\bf (C1)}] {\it There exists a function $V: \bfX\mapsto \R_+$  and constants $K\geq 0$ and  $\gamma\in \,]0,1[\,$ such that
\bel{c1}(\P V)(x)~\leq~\gamma V(x) + K\qquad\qquad\forall x\in \bfX.\eeq
}

\item[{\bf (C2)}] {\it 
There exists a constant $\eta\in \,]0,1[\,$  and a probability measure $\nu$  so that 
\bel{c2} \P(x, \cdot)~\geq~\eta \nu(\cdot)\qquad\forall x\in \bfX
~\hbox{such that} ~V (x) \leq  R,\eeq
for some constant 
$R> 2K/(1-\gamma)$.}\endi

To state Harris’ theorem~\cite{H}, we introduce the following weighted supremum norm:
\bel{snorm} \|\vp\|_V  ~\doteq~\sup_x~{\bigl|\vp(x)\bigr|\over 1+V(x)}\,. \eeq

\begin{theorem}\label{t:31}
If the conditions {\bf (C1)-(C2)} hold, then $\P$ admits a unique invariant measure
$\mu_*$. Moreover,  there exist constants $C > 0$ and $\gamma\in \,]0,1[\,$ such that the bounds
\bel{Pnb}
\bigl\| \P^n\vp - \mu_*(\vp)\bigr\|~\leq~C \gamma^n \bigl\| \vp-\mu_*(\vp)\bigr\|\,,\qquad\qquad \forall
n\geq 1,\eeq
hold for every measurable function $\vp : \bfX\mapsto\R$ such that $\|\vp\|<\infty$. 
\end{theorem}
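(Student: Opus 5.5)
The plan is to follow the short argument of Hairer and Mattingly \cite{HM}. The idea is to construct a weighted total-variation norm in which a single step of $\P$ is a strict contraction. Existence, uniqueness and the geometric bound (\ref{Pnb}) then all follow from the Banach fixed point theorem.

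\textbf{Step 1: the weighted norm.} For a parameter $b>0$, to be chosen later, I would introduce
$$\|\vp\|_{b}~\doteq~\sup_x {|\vp(x)|\over 1+bV(x)}.$$
This norm is equivalent to $\|\cdot\|_V$ in (\ref{snorm}), with constants depending only on $b$. By duality, it suffices to prove a contraction estimate for pairs of probability measures $\mu_1,\mu_2$ with finite $V$-moment. In the weighted total variation
$$\rho_b(\mu_1,\mu_2)~\doteq~\int (1+bV)\,d|\mu_1-\mu_2|,$$
I would aim to show that there are $b>0$ and $\bar\gamma<1$ with
$$\rho_b(\mu_1\P,\mu_2\P)~\leq~\bar\gamma\,\rho_b(\mu_1,\mu_2).$$
Since $\rho_b$ is the dual of $\|\cdot\|_b$ on functions with zero mean, this is equivalent to
$$\|\P\vp-c\|_b~\leq~\bar\gamma\,\|\vp-c\|_b \qquad\hbox{for a suitable constant } c.$$
A further reduction is that $\mu_1-\mu_2$ can be split as a difference of mutually singular positive measures of equal mass. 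So it is enough to establish the pointwise two-point estimate
$$\|\P(x,\cdot)-\P(y,\cdot)\|_{\rho_b}~\leq~\bar\gamma\,\big(2+bV(x)+bV(y)\big)\qquad\forall\, x\neq y,$$
and then integrate it against a coupling.

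\textbf{Step 2: the two-point estimate.} This is the heart of the argument, and I expect the choice of constants to be the main obstacle. I would split into two cases according to the size of $V(x)+V(y)$.

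\emph{Case $V(x)+V(y)\geq R$.} Here the minorization (\ref{c2}) is not used. From (\ref{c1}), the left-hand side is bounded by
$$2+b\big(\P V(x)+\P V(y)\big)~\leq~2+b\gamma\big(V(x)+V(y)\big)+2bK.$$
Because $R>2K/(1-\gamma)$, one can fix $\gamma_0\in\,]\gamma,1[$ with $2K\leq(\gamma_0-\gamma)R$. On this set, $2+2bK+b\gamma(V(x)+V(y))$ is then a fraction strictly less than $1$ of $2+bV(x)+bV(y)$, at least when $b$ is not too small relative to $1/R$. More precisely, the ratio is below
$$\max\left\{\gamma_0,\ {2+b(2K+\gamma R)\over 2+bR}\right\}~<~1.$$

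\emph{Case $V(x)+V(y)<R$.} Both points lie in the sublevel set $\{V\leq R\}$, so (\ref{c2}) applies to each. Writing
$$\P(x,\cdot)=\eta\nu+(1-\eta)\Tilde\P(x,\cdot),$$
the $\nu$-parts cancel, and the unweighted part of the distance is at most $2(1-\eta)$. The $V$-weighted part contributes at most $b\big(\gamma(V(x)+V(y))+2K\big)\leq b(\gamma R+2K)$. Hence the bound is
$$2(1-\eta)+b(\gamma R+2K).$$
Choosing $b$ small enough that $b(\gamma R+2K)\leq\eta$ makes this at most $(2-\eta)$, which is $\leq(1-\eta/2)\cdot 2$. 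This is less than $\big(1-\eta/2\big)\big(2+bV(x)+bV(y)\big)$.

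The delicate point is that small $b$ is needed in the second case, while the first case needs $bR$ to be not too small. The fraction in the first case is $<1$ for every $b>0$, but it tends to $1$ as $b\to0$. So I would first fix $\gamma_0$, then take $b$ as the largest value allowed by the second case, and finally set
$$\bar\gamma~\doteq~\max\left\{1-{\eta\over2},\ {2+b(2K+\gamma R)\over 2+bR}\right\}~<~1.$$

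\textbf{Step 3: conclusion.} The space of probability measures with $\int V\,d\mu<\infty$ is complete under $\rho_b$. By (\ref{c1}) it is mapped into itself by $\mu\mapsto\mu\P$, and by Step 2 this map is a strict contraction. The Banach fixed point theorem therefore gives a unique invariant measure $\mu_*$ in this class. Uniqueness among all invariant measures follows because (\ref{c1}) forces any invariant measure to satisfy
$$\int V\,d\mu~\leq~{K\over 1-\gamma}.$$
Iterating the contraction gives $\rho_b(\mu\P^n,\mu_*)\leq\bar\gamma^n\rho_b(\mu,\mu_*)$. In dual form this reads
$$\|\P^n\vp-\mu_*(\vp)\|_b~\leq~\bar\gamma^n\|\vp-\mu_*(\vp)\|_b.$$
Converting back to $\|\cdot\|_V$ through the norm equivalence of Step 1 produces (\ref{Pnb}), with $C$ depending only on $b$ and with $\bar\gamma$ playing the role of $\gamma$.
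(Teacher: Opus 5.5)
Your argument is correct. It is essentially the Hairer--Mattingly proof of Harris' theorem in full generality, which is a genuinely different and more general route than the paper's. The paper does not prove Theorem~\ref{t:31} under {\bf (C1)}--{\bf (C2)} as stated: it cites \cite{HM, H} and proves the conclusion only for the traffic kernel. There the minorization (\ref{c3}) holds for \emph{every} $u\in\bfX$ (a global Doeblin condition), and the drift bound (\ref{TVub}) is uniform, $\P V\le K$. Hence $\P=\eta\nu+(1-\eta)Q$ on all of $\bfX$, so for every zero-mass signed measure $\lambda$ one has $\P\lambda=(1-\eta)Q\lambda$. Together with $(1-\eta)QV\le\P V\le K$ this gives at once $\int(1+bV)\,d|\P\lambda|\le(1-\eta+bK)\int(1+bV)\,d|\lambda|$. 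No two-point reduction, coupling, or case distinction on the size of $V(x)+V(y)$ is needed, and $\mu^*$ is obtained as the limit of the Cauchy sequence $\P^n\delta_{\bar u}$ rather than from Banach's theorem. Your route pays for its generality with exactly the balancing you describe: a minorization available only on $\{V\le R\}$ and a drift with $\gamma>0$ force your two cases and the choice of $b$. (In Case~1 the auxiliary $\gamma_0$ is superfluous: $s\mapsto(2+2bK+b\gamma s)/(2+bs)$ is decreasing, and its value at $s=R$ is $<1$ precisely because $R>2K/(1-\gamma)$.) So the paper's proof is shorter but covers only a special case, while yours proves the statement as written.

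Two small points should be tightened. First, the iterated contraction does not dualize to $\|\P^n\vp-\mu_*(\vp)\|_b\le\bar\gamma^n\|\vp-\mu_*(\vp)\|_b$ with constant $1$. The dual of $\rho_b$ on zero-mass measures is the seminorm $\inf_c\|\vp-c\|_b$, not $\|\vp-c\|_b$ for a prescribed $c$. Test instead against $\delta_x$ and use $\rho_b(\delta_x,\mu_*)\le(1+bV(x))\bigl(2+b\,\mu_*(V)\bigr)$. This gives $\|\P^n\vp-\mu_*(\vp)\|_b\le\bigl(2+b\,\mu_*(V)\bigr)\bar\gamma^n\|\vp-\mu_*(\vp)\|_b$, so your constant $C$ depends on $K/(1-\gamma)$ as well as on $b$; this is harmless for (\ref{Pnb}). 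Second, the claim that (\ref{c1}) forces $\mu(V)\le K/(1-\gamma)$ for every invariant $\mu$ needs a truncation, since a priori $\mu(V)=+\infty$. Write $\mu(V\wedge N)=\mu\bigl(\P^n(V\wedge N)\bigr)\le\mu\bigl((\gamma^nV+K/(1-\gamma))\wedge N\bigr)$, let $n\to\infty$ by dominated convergence, and then let $N\to\infty$. (The paper sidesteps this by asserting uniqueness only among invariant measures with finite $V$-moment.)
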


%{\color{red} Following \cite{HM}, write out a proof of this theorem, in the special case we need.}  

We shall only need the above result
for the particular Markov process associated with our stochastic traffic
model. Accordingly, after introducing the state space below, we shall give a
direct proof of the conclusion of Theorem~\ref{t:31} in this special case,
using the stronger estimates available in our setting.

For a fixed $T>0$, we shall first apply the above theorem to the Markov process  
with discrete times: $\tau_n = n\, T$, where the transition kernel $\P$ corresponds to 
the random map
$$u(t,\cdot)~~\mapsto~~u(t+T,\cdot).$$

In view of Theorem~\ref{t:21}, we define $\bfX \doteq \S$ to be the set of admissible profiles 
considered in Definition~\ref{d:22}.
%
%all piecewise  Lipschitz
%continuous functions
%$u:[0,L]\mapsto [0,\alpha]$, with upward jumps at points
%$0\leq y_1<y_2<\cdots< y_n<L$, such that
%\begi
%\item[(I)]  On each open interval $I_k(t)\doteq\,\bigl]y_{k-1}, y_k\bigr[\,,$  either 
%$u(t,x)= u_0$, or else $u_x(t,x)<0$.
%\item[(II)] If $u_x(t,x)\leq -\delta$ at some point $x\in I_k$, then $u_x(t,\cdot)$ is constant
%on $I_k(t)$.
%\endi
For  $u\in \bfX$, we then define
\bel{Vu}
V(u)~\doteq~\TV\bigl\{u\,;~[0,L]\bigr\}.\eeq

\begin{theorem}\label{t:82}
For every 
$T>0$ sufficiently large, the transition kernel $\P$ on $\bfX$ satisfies the two 
conditions {\bf (C1)-(C2)}.   Hence the stochastic process in discrete time:  $\tau_n = n\, T$, 
admits a unique stationary probability distribution $\mu^*$.   Moreover,  the bounds
(\ref{Pnb})  hold.
\end{theorem}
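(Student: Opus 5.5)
We check {\bf (C1)} and {\bf (C2)} for the kernel $\P$ of the random map $u(t,\cdot)\mapsto u(t+T,\cdot)$, with Lyapunov function $V(u)=\TV\{u\}$, and then invoke Theorem~\ref{t:31}.

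\textbf{Condition (C1).} We rerun the argument of Lemma~\ref{l:31} with a general initial profile $\bar u\in\S$ instead of a constant one. At time $T$ the variation splits into two parts: the part carried by the spikes generated during $[0,T]$, and the part inherited from $\bar u$. For the new spikes, steps 2--4 of Lemma~\ref{l:31} and Proposition~\ref{prop:41} apply unchanged, since they use only the Poisson times $t_j\in[0,T]$. Their expected contribution is therefore at most $2\alpha(1+e^{\mu})$, uniformly in $\bar u$.

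For the inherited part, the genuine nonlinearity estimate (\ref{zdot}) gives $u_x\geq -1/(\beta^*T)$ along every characteristic issued from $\bar u$ at time $0$. These characteristics occupy a set of measure at most $L$. Their total negative variation is thus at most $L/(\beta^*T)$, and the total variation they carry is at most $2L/(\beta^*T)$, independently of $\bar u$.

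Hence $(\P V)(\bar u)\leq K\doteq 2\alpha(1+e^\mu)+2L/(\beta^*T)$. This gives (\ref{c1}) for any $\gamma\in\,]0,1[\,$, for instance $\gamma=1/2$. The bound is in fact even stronger than (C1) requires, since $K$ does not depend on $V(\bar u)$ at all.

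The one delicate point is the claim that the variation splits additively as in (\ref{TV1}). To justify it, we assign each point $x$ either to a set $V_i(T)$ or to the image of the old characteristics. Shocks only absorb characteristics, so they do not create negative variation. Upward jumps contribute exactly half of the total variation, because $u$ is periodic.

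\textbf{Condition (C2).} We take $T=T_1+T_2+T_3$ as in Lemma~\ref{l:61}, or larger (see the last step), and use the measure $\nu$ of Remark~\ref{r:61}. On the event $P_{010}$ the following hold:
\begin{itemize}
\item there is exactly one Poisson time $t_0\in[T_1,T_1+1]$ and no Poisson times elsewhere in $[0,T]$;
\item the point $X_0$ is uniform on $[0,L]$;
\item $t_0$ is uniform on $[T_1,T_1+1]$, conditionally on $P_{010}$.
\end{itemize}
Moreover, Step~1 of Lemma~\ref{l:61} shows that $u_x(t_0,\cdot)>-\delta$ a.e. A new spike is therefore actually created, with probability one.

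Step~6 of Lemma~\ref{l:61} then shows that $u(T,\cdot)=U^{(t_0,X_0)}(T,\cdot)=\Lambda(t_0,X_0)$, independently of $\bar u$. Consequently
$$\P(\bar u,\cdot)~\geq~\prob(P_{010})\,\Lambda_\#\Tilde\nu~=~\rho\,\nu(\cdot)\qquad\forall \bar u\in\S,$$
which is (C2) with $\eta=\rho=\lambda e^{-\lambda T}$, valid for every $R$.

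\textbf{Main obstacle.} The step I expect to be hardest is the claim that $u(T,\cdot)$ is exactly independent of $\bar u$, which (C2) needs in full. This requires two things. First, all characteristics issued from the region outside the spike at time $\Hat\tau_0$ must be absorbed by the single shock before time $T$. Second, the shock location must be fixed by mass conservation (\ref{masscons}).

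I would check this by enlarging $T_3$. The spike's shocks move with speeds separated by a uniform gap, as in Step~3 of Lemma~\ref{l:61}. Characteristics outside the spike are nearly parallel, with speeds close to $f'(u_0)$. Hence they are swallowed within a time bounded uniformly in $\bar u$. This is also why the statement is formulated for $T$ sufficiently large.

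With (C1) and (C2) established, Theorem~\ref{t:31} yields a unique invariant measure $\mu^*$ and the geometric convergence bound (\ref{Pnb}).
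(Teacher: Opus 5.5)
Your proposal is correct and follows essentially the paper's route. (C1) comes from the time-independent bound of Proposition~\ref{prop:41}, which you extend to arbitrary data in $\S$ more carefully than the paper does, by splitting off the inherited variation $\leq 2L/(\beta^*T)$. (C2) is a global Doeblin minorization with $\eta=\rho$ on the event $P_{010}$, via Lemma~\ref{l:61} and Remark~\ref{r:61}; the only difference is that you invoke Theorem~\ref{t:31}, whereas the paper proves the contraction directly in a weighted total-variation norm.

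One small caveat concerns your ``main obstacle'' heuristic. The uniform speed gap between the two large shocks is justified only on $[t_0,\Hat\tau_0]$, by Step~3. After the flux switches to $f$ the shocks weaken in N-wave fashion and their speed difference decays. So the uniformly bounded merger time is really the content of Step~5 of Lemma~\ref{l:61}, which you can simply cite, as your argument in fact does.
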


%
%In essence, this follows from the fact that, for any initial data $\bar u\in \bfX$, with positive probability, at some fixed time $T>0$ the random solution takes values within 
%a compact set $\M$ diffeomorphic  to $[0,1]\times [0,L]$ (with endpoints 0 and $L$ identified).
%

{\bf Proof.}  {\bf 1.} Condition {\bf (C1)} is straightforward.   In view of Proposition~\ref{prop:41},
choosing $T$ large enough, for any $u(t,\cdot)\in \bfX$ one has
\bel{TVub}
E\Big[ \tv\bigl\{ u(t+T,\cdot)\bigr\}~\Big|~u(t,\cdot)\Big]~\leq~K~\doteq~
2\alpha \cdot \left( 1+ e^{L/\alpha\beta^*}\right).
\eeq
Hence (\ref{c1}) trivially holds for any $\gamma\geq 0$.
\v
{\bf 2.}
To establish Condition {\bf (C2)}, let $\nu(\cdot)$ be the probability measure on the 2-dimensional 
set $\M$
constructed in Remark~\ref{r:61}.   We claim that, for some $\eta>0$ small enough, there holds
\bel{c3} \P(u, \cdot)~\geq~\eta\, \nu(\cdot)\qquad\forall u\in \bfX.\eeq

Indeed, let $T=T_1+1+T_3$ be as in Lemma`\ref{l:61}.
Consider any initial data $\bar u\in \bfX=\S$.
Then, with positive probability, the random solution 
$u(T,\cdot)$
lies in the 2-dimensional 
manifold $\M$.   
Moreover, as in the proof of Lemma~\ref{l:61}, in this case the profile $u(T,\cdot)$ depends only on the
time $t_0\in [T_1, T_1+1]$ and the location $X_0\in [0,L]$ where the new spike is generated.
 This implies (\ref{c3}), for some $\eta>0$ small enough, independent of 
$\bar u\in \bfX$.
\v
{\bf 3.}
Thus we have established \eqref{TVub}-\eqref{c3}, which are stronger than the assumptions
{\bf (C1)}--{\bf (C2)}. 
For completeness we now give a direct proof that these stronger estimates imply the conclusion of
Theorem~\ref{t:31}.

By \eqref{c3}, for every \(u\in\bfX\) we can write
\[
\P(u,\cdot)=\eta\nu(\cdot)+(1-\eta)Q(u,\cdot),
\]
where \(Q\) is another Markov transition kernel.

Let \(\lambda\) be a signed measure on \(\bfX\) with total mass zero. Since
the common part \(\eta\nu\) cancels, we have
\[
\P\lambda\,=\,(1-\eta)Q\lambda .
\]
Fix $\beta>0$ sufficiently small and define the weighted total variation norm
\[
\|\lambda\|_\beta
\,\doteq\,
\int_{\bfX}(1+\beta V(u))\,|\lambda|(du).
\]
By \eqref{TVub} it follows
\[
(1-\eta)(QV)(u)
~\le~
(\P V)(u)
~\le~ K .
\]
Therefore
\[
\bega{rl}
\|\P\lambda\|_\beta
&=~(1-\eta)\|Q\lambda\|_\beta        \\[3mm]
&\le~\ds
(1-\eta)\int_{\bfX}\bigl(1+\beta(QV)(u)\bigr)\,|\lambda|(du)    \\[4mm]
&\le~\ds
\int_{\bfX}\bigl((1-\eta)+\beta K\bigr)\,|\lambda|(du).
\enda
\]
Choosing \(\beta>0\) small enough we achieve
\bel{Pcontr}
\|\P\lambda\|_\beta\,\le\, \gamma\|\lambda\|_\beta ,
\eeq
for some $\gamma<1$.
Thus \(\P\) is a strict contraction on signed measures of total mass zero.
\v
{\bf 4.}
Fix \(\bar u\in\bfX\) and denote by $\delta_{\bar u}$ the probability measure
concentrating a unit mass at $\bar u$.  Setting
\[
\mu_n\,\doteq\, \P^n\delta_{\bar u}\,,
\]
by induction we obtain
\[
\|\mu_{n+1}-\mu_n\|_\beta
~=~
\|\P^n(\P\delta_{\bar u}-\delta_{\bar u})\|_\beta
~\le~
\gamma^n\|\P\delta_{\bar u}-\delta_{\bar u}\|_\beta\, .
\]
Hence $(\mu_n)_{n\geq 1}$ is a Cauchy sequence in the weighted total variation norm.
Let \(\mu^*\) be its limit. Letting $n\to\infty$ in the identity
\[
\mu_{n+1}=\P\mu_n
\]
one obtains
\[
\P\mu^*=\mu^*.
\]
showing that \(\mu^*\) is invariant.

If \(\mu\) is another invariant probability measure with finite \(V\)-moment, then
\[
\|\mu-\mu^*\|_\beta
~=~
\|\P^n\mu-\P^n\mu^*\|_\beta
~\le~
\gamma^n\|\mu-\mu^*\|_\beta .
\]
Letting \(n\to\infty\), we obtain \(\mu=\mu^*\). Hence the invariant measure
is unique.
\v
{\bf 5.}
Finally, for any measurable \(\psi\) with \(\|\psi\|_V<\infty\),
\[
\left|\P^n\psi(u)-\mu^*(\psi)\right|
~\le~
\|\psi\|_V\,\|\P^n\delta_u-\mu^*\|_\beta
~\le~
C\gamma^n(1+V(u))\|\psi\|_V .
\]
Dividing by \(1+V(u)\) and taking the supremum over \(u\in\bfX\), we obtain
\[
\|\P^n\psi-\mu^*(\psi)\|_V
~\le~
C\gamma^n\|\psi\|_V .
\]
Applying this estimate to
\[
\psi~=~\varphi-\mu^*(\varphi)
\]
one completes the proof of Theorem~\ref{t:82}.
\endproof
\v

 It remains to pass from the discrete-time Markov chain to the continuous time process.
Let \(\{ S_t\,;~t\geq 0\}\) denote the Markov semigroup in  continuous time.
The preceding argument shows that, for some \(T>0\), the transition kernel
\[
 \P=S_T
\]
has a unique invariant probability measure \(\mu^*\).  Moreover, for every 
probability measure $\mu$ on $\bfX$ one has
\bel{limn} \lim_{n\to +\infty} \bigl\|S_{ nT} \,\mu - \mu_*\bigr\| ~=~0.\eeq
We now show that the limit (\ref{limn}) holds more generally as time ranges over all real values.

\begin{theorem} In the above setting, for every probability measure $\mu$ on $\bfX$ one has
\bel{Slim}  \lim_{t\to +\infty} \bigl\|S_t \mu - \mu_*\bigr\| ~=~0.\eeq
\end{theorem}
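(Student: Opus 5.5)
The plan is to derive the continuous-time limit from the discrete-time one by writing $t = nT + s$ with $s\in[0,T[$, and then using that each $S_s$ is a Markov operator. The first step is to show that $\mu_*$ is invariant for every $S_s$, not only for $S_T$. Since the semigroups commute, $S_T(S_s\mu_*) = S_s S_T\mu_* = S_s\mu_*$, so $S_s\mu_*$ is also an invariant probability for $\P=S_T$. We also need $S_s\mu_*$ to have a finite $V$-moment, so that the uniqueness argument of Theorem~\ref{t:82} applies. This follows from Proposition~\ref{prop:41}, or more precisely from its conditional version (\ref{TVub}) together with the trivial bound for short times. From uniqueness we get $S_s\mu_*=\mu_*$ for all $s\ge 0$.

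Next, for $t=nT+s$ we write
$$S_t\mu-\mu_* ~=~ S_{nT}\bigl(S_s\mu\bigr)-S_{nT}\mu_* ~=~ S_{nT}\bigl(S_s\mu-\mu_*\bigr).$$
The signed measure $\lambda_s\doteq S_s\mu-\mu_*$ has total mass zero. The contraction (\ref{Pcontr}) then gives $\|S_t\mu-\mu_*\|_\beta\le \gamma^n\,\|S_s\mu-\mu_*\|_\beta$. It remains to bound $\sup_{s\in[0,T]}\|S_s\mu-\mu_*\|_\beta$ uniformly. We have $\|S_s\mu\|_\beta\le 1+\beta\int (S_sV)\,d\mu$. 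For this we need an estimate $(S_sV)(u)\le V(u)+C$ for $s\in[0,T]$. It should follow by revisiting Lemma~\ref{l:31}: each spike adds at most $2\alpha$ to the total variation, the variation of existing profiles does not increase under the flow (shocks merge and rarefactions decay), and the expected number of spikes on $[0,T]$ is $\lambda T$. So if $\mu$ has a finite $V$-moment, the bound is uniform in $s$, and since $\gamma^n\to0$ with $n=\lfloor t/T\rfloor$ we obtain (\ref{Slim}) in the weighted norm, hence in total variation.

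The main obstacle is a general probability measure $\mu$ with $\int V\,d\mu=\infty$. I would handle it by running the decomposition one step earlier. Since $(\P V)(u)\le K$ uniformly by (\ref{TVub}), and the same proof gives $S_\tau V\le K'$ for all $\tau\ge T$, the measure $S_{T+s}\mu$ has $V$-moment at most $K'$ for every $s\in[0,T]$, regardless of $\mu$. Writing $t=(n+1)T+s$, we get $\|S_t\mu-\mu_*\|_\beta\le\gamma^n\bigl(2+\beta(K'+\int V\,d\mu_*)\bigr)$, which tends to zero. A point needing care is that (\ref{TVub}) holds for every intermediate time $\tau\ge T$ and not only at multiples of $T$. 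This is clear from the proof of Proposition~\ref{prop:41}, whose bound is independent of the initial profile once at least the time $L/(\alpha\beta^*)$-scale spikes have accumulated; I would state it as a uniform bound for $\tau\ge T$, choosing $T$ larger if necessary. The final estimate also yields an exponential rate.
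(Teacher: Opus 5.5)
Your argument is correct, but it is organized differently from the paper's. The paper first asserts that the contraction (\ref{Pcontr}) holds for every kernel $S_\tau$ with $\tau\in[T,2T]$. This implicitly requires both the moment bound and the minorization \textbf{(C2)} at each such $\tau$, for instance by rerunning Lemma~\ref{l:61} with a longer spike-free final interval and a $\tau$-dependent measure $\nu_\tau$. Then, for an arbitrary sequence $\tau_j\to+\infty$, the paper builds a grid $(\tau'_k)$ with increments in $[T,2T]$ that contains a subsequence, and iterates the contraction starting from $\tau'_1\geq T$.

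You use only the contraction for $\P=S_T$ already proved in Theorem~\ref{t:82}. Writing $t=(n+1)T+s$, you move the fractional part to the front, $S_t\mu-\mu_*=S_{nT}\bigl(S_{T+s}\mu-\mu_*\bigr)$. You then absorb it through the uniform bound $S_\tau V\leq K'$ for $\tau\in[T,2T]$. This is just (\ref{TVub}) with $\tau$ in place of $T$; the paper states it for all $T$ large enough, and uses the same fact to make the right-hand side of (\ref{betadecay}) finite.

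Your route avoids the minorization at intermediate times and the subsequence construction, and gives directly the rate $\gamma^{\lfloor t/T\rfloor-1}$. The paper's route gives the somewhat stronger structural fact that each $S_\tau$ with $\tau\in[T,2T]$ is itself a contraction. You are also right to check that $S_s\mu_*$ has finite $V$-moment before invoking uniqueness, which the paper skips. This is in fact automatic, since any $S_T$-invariant probability $\nu$ satisfies $\int V\,d\nu=\int (S_TV)\,d\nu\leq K$.

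One caveat concerns the intermediate claim $(S_sV)(u)\leq V(u)+C$, which you justify by saying that the variation of existing profiles does not increase under the flow. This is not evident in this model. As noted in Fig.~\ref{f:df133} and (\ref{du0}), characteristics may leave the shock $y''$ on the fan side, so the left state decreases and the jump can grow. Your final argument via $S_{T+s}\mu$ does not use this bound, so that step, together with the ``trivial bound for short times'', can simply be dropped.
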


{\bf Proof.} {\bf 1.} We observe that the same argument used in the proof of (\ref{Pcontr}) actually 
shows that there exists $\gamma\in \,]0,1[\,$ such that 
\bel{Scon}\| S_\tau \lambda\|_\beta~\leq~\gamma \|\lambda\|_\beta\eeq
for every signed measure $\lambda$ of zero total mass and every $\tau\in [T, 2T]$.
\v
{\bf 2.}  Next, we claim that $\mu_*$ is invariant under the entire semigroup $(S_t)_{t\geq
0}$. Indeed, for every $s\geq0$, using the semigroup property and the invariance
of $\mu_*$ under $S_T$, we obtain
\[
S_T(S_s\mu_*)
\,=\,
S_s(S_T\mu_*)
\,=\,
S_s\mu_*\,.
\]
Therefore $S_s\mu_*$ is an invariant probability measure for the transition kernel
$S_T$. By the uniqueness of the $S_T$-invariant probability measure, it follows
that
\bel{mustarall}
S_s\mu_*\, =\, \mu_*
\qquad\hbox{for every }s\geq0.
\eeq
\v
{\bf 3.} To prove (\ref{Slim}) it suffices to show that, for every sequence of times $\tau_j\to +\infty$,
one can extract a subsequence $(\tau_{j(m)})_{m\geq 1}$ such that
\bel{Slim2}  \lim_{m\to\infty} \bigl\|S_{\tau_{j(m)}}  \mu - \mu_*\bigr\| ~=~0.\eeq

 Given a sequence $(\tau_j)_{j\geq1}$ with $\tau_j\to+\infty$, we first extract
a subsequence $(\tau_{j(m)})_{m\geq 1}$ such that
\[
\tau_{j(m+1)}-\tau_{j(m)}\,\geq\, T
\qquad\hbox{for every }m\geq1.
\]
For every interval $[\tau_{j(m)},\tau_{j(m+1)}]$, choose an integer $N_m\geq1$ such that
\[
T~\leq~\frac{\tau_{j(m+1)}-\tau_{j(m)}}{N_m}~\leq~2T.
\]
Dividing each interval
$[\tau_{j(m)},\tau_{j(m+1)}]$ into $N_m$ equal parts and arranging all the resulting
points in increasing order, we obtain a sequence $(\tau'_k)_{k\geq1}$ such that
\begi
\item[(i)] $\tau'_{k+1}-\tau'_k\in [T, 2T]$ for every $k\geq 1$.
\item[(ii)] For every $m\geq 1$ one has  $\tau_{j(m)}\in \{\tau'_k\,;~k \geq 1\}$.
\endi

By (i), each transition kernel $S_{\tau'_{k+1}-\tau'_k}$ is a strict contraction on the
family of signed measures with zero total mass. Since $S_{\tau'_k}\mu-\mu_*$ has zero total mass, (\ref{Scon}) yields
\[
\bigl\|S_{\tau'_{k+1}}\mu-\mu_*\bigr\|_\beta ~=~ \bigl\|S_{\tau'_{k+1}-\tau'_k}\bigl(S_{\tau'_k}\mu-\mu_*\bigr)\bigr\|_\beta
~\leq~
\gamma
\bigl\|S_{\tau'_k}\mu-\mu_*\bigr\|_\beta\,.
\]
Iterating this estimate, we find
\bel{betadecay}
\bigl\|S_{\tau'_k}\mu-\mu_*\bigr\|_\beta
~\leq~
\gamma^{k-1}
\bigl\|S_{\tau'_1}\mu-\mu_*\bigr\|_\beta.
\eeq
We
choose $\tau'_1\geq T$, so that the quantity on the right-hand side of
(\ref{betadecay}) is finite. Since $0<\gamma<1$, and the weighted norm dominates the total variation norm, we conclude
\bel{lim3}
\lim_{k\to\infty}
\bigl\|S_{\tau'_k}\mu-\mu_*\bigr\|=0.
\eeq

By (ii),  for every $m\geq 1,$ there exists $k_m \in \mathbb{N}$ such that $\tau_{j(m)}=\tau_{k_m}'$. Hence (\ref{lim3}) implies (\ref{Slim2}).
\endproof

\section{Numerical simulations}
\label{sec:9}
\setcounter{equation}{0}
To simplify our computations, we first  take 
\bel{fgdef} g(u) \,=\, u-u^2,\qquad\qquad f(u)\,=\, u-u^2 +\ve_0,\eeq 
for some $\ve_0>0$, and then we slightly modify the values of $f$ in a neighborhood of 0 and 1,
so that all the assumptions in {\bf (A1)} hold.   In particular $f(0)=g(0) =0$ and
$f(1)=g(1)=0$.   For $\ve_0>0$ small, we expect that with high probability the
random solution $u=u(t,x)$
will take values  inside the region where both $f$ and $g$ are  quadratic polynomials.

At every time $t>0$, the solution $u(t,\cdot)$ will have a polygonal graph. It will have upward jumps
at finitely many shocks, say   located at 
$$x_1(t) ~<~ x_2(t)~<~\cdots~<~x_N(t),\qquad\qquad x_i(t)\in [0, L[\,,$$
and will be affine on each of the open intervals $]x_{i-1}, x_i[$.
The positions of the points $x_i(t)$ and the values $u_i^\pm(t)$ to the left and right of these points satisfy a system of ODEs.   These ODEs must be be restarted when one of these four
cases happens:

\begi
\item[(i)] By periodicity, when one of the points $x_i(t) $ crosses the end of the interval $[0,L]$,
i.e. when $x(t)=L$,  it is replaced by a shock located at $x(t)=0$ (see Fig.~\ref{f:df107}).

\item[(ii)] When two shocks interact, the solution is restarted by joining them into a single shock.

\item[(iii)] When the gradient $u_x(t,\cdot)<0$, which is constant on each interval 
$\bigl] x_{i-1}(t), x_i(t)\bigr[\,$,
becomes larger than $-\delta$, the flux on that interval is changed from $g$ to $f$.
Notice that, according to (\ref{fgdef}),
\bel{uxt} {d\over dt} u_x(t,\cdot)~=~-g''(u) u_x^2~=~2 u_x^2\,.\eeq

\item[(iv)] At a random time and point $(t_j, X_j)$, if the flux is $f$ a new spike is formed.
The size of this spike depends on the value $u=u(t_j, X_j)$.   It has to be computed 
by finding the points $Q_1$, $Q_2$ where the secants through $P= \bigl(u, f(u)\bigr)$
are tangent to the graph of $g$, see Fig.~\ref{f:df131}.  Notice that a new spike will be produced as soon as $u_x(t_j, X_j)>-\delta$.
\endi

\begin{figure}[ht]
\centerline{\hbox{\includegraphics[width=11cm]{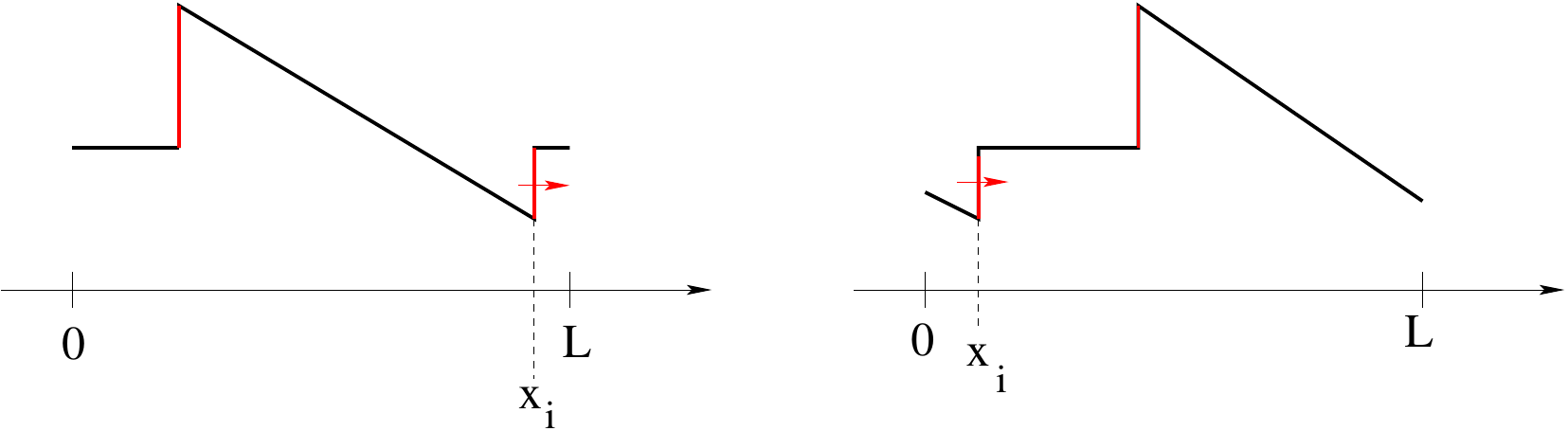}}}
\caption{\small By periodicity, when a shock at $x_i(t)$ crosses the endpoint $x=L$, it is restarted at $x=0$. }
\label{f:df107}
\end{figure}

{\bf Evolution of nodes across a shock.}    Let $(x_k, u_k)$ and $(x_{k+1}, u_{k+1})$ 
be the two nodes along the polygonal graph of $u(t,\cdot)$ which correspond to a shock, with $x_k=x_{k+1}$, $u_k<u_{k+1}$.

The shock speed is determined by the Rankine-Hugoniot equations. 
\bel{RHC}\hbox{shock speed}~=~{\hbox{[flux to the right] - [flux to the left]}\over
\hbox{[state to the right] - [state to the left]}}\eeq
This leads to 4 cases, depending on whether the fluxes to the right and to the left
are given by $f$ or $g$.
$$\dot x_k ~=~\dot x_{k+1}~=~{f(u_{k+1}) - f(u_k)\over u_{k+1}- u_k},\eqno(f,f)$$
$$\dot x_k ~=~\dot x_{k+1}~=~{g(u_{k+1}) - g(u_k)\over u_{k+1}- u_k},\eqno(g,g)$$
$$\dot x_k ~=~\dot x_{k+1}~=~{f(u_{k+1}) - g(u_k)\over u_{k+1}- u_k},\eqno(f,g)$$
$$\dot x_k ~=~\dot x_{k+1}~=~{g(u_{k+1}) - f(u_k)\over u_{k+1}- u_k}.\eqno(g,f)$$
\v
In addition, the states $u_k$, $u_{k+1}$ now vary in time, because characteristics impinge on the shock
from both sides.

In the four above cases we respectively have
$$\dot u_k~=~\bigl[ f'(u_k) - \dot x_k\bigr] \cdot {u_k-u_{k-1}\over x_k-x_{k-1}}\,,\qquad
\dot u_{k+1}~=~\bigl[ \dot x_{k+1} - f'(u_{k+1}) \bigr] \cdot {u_{k+2}-u_{k+1}\over x_{k+2}-x_{k+1}}\,,\qquad\eqno(f,f)$$
$$\dot u_k~=~\bigl[ g'(u_k) - \dot x_k\bigr] \cdot {u_k-u_{k-1}\over x_k-x_{k-1}}\,,\qquad
\dot u_{k+1}~=~\bigl[ \dot x_{k+1} - g'(u_{k+1}) \bigr] \cdot {u_{k+2}-u_{k+1}\over x_{k+2}-x_{k+1}}\,,\qquad\eqno(g,g)$$
$$\dot u_k~=~\bigl[ f'(u_k) - \dot x_k\bigr] \cdot {u_k-u_{k-1}\over x_k-x_{k-1}}\,,\qquad
\dot u_{k+1}~=~\bigl[ \dot x_{k+1} - g'(u_{k+1}) \bigr] \cdot {u_{k+2}-u_{k+1}\over x_{k+2}-x_{k+1}}\,,\qquad\eqno(f,g)$$
$$\dot u_k~=~\bigl[ g'(u_k) - \dot x_k\bigr] \cdot {u_k-u_{k-1}\over x_k-x_{k-1}}\,,\qquad
\dot u_{k+1}~=~\bigl[ \dot x_{k+1} - f'(u_{k+1}) \bigr] \cdot {u_{k+2}-u_{k+1}\over x_{k+2}-x_{k+1}}\,.\qquad\eqno(g,f)$$

Here $\dot x_k = \dot x_{k+1}$ is the speed of the shock, given by the previous formulas.
Note that here  the first factor is the difference in speed, i.e.~the rate at which
characteristics flow into the shock.   The second factor is the slope of the solution to the left 
or to the right of the shock.

By (\ref{uxt})  the slopes of the segments where the solution is decreasing
can be explicitly computed.
Calling
\bel{zdef}z_{k-1}(t)~\doteq~ {u_k-u_{k-1}\over x_k-x_{k-1}}\,,\qquad\qquad 
z_{k+1}(t)~\doteq~ {u_{k+2}-u_{k+1}\over x_{k+2}-x_{k+1}}\,,\eeq
we have
$${d\over dt} z_{k-1}(t)~=~ 2 z^2_{k-1}(t),\qquad\qquad {d\over dt} z_{k+1}(t)~=~ 2 z^2_{k+1}(t)
.$$
If $t_j$ is one of the restarting times, then for $t\in [t_j, t_{j+1}]$ we have
\bel{zf} z_{k-1}(t)~=~{1\over {1\over z_{k-1}(t_j)} - 2(t-t_j)} \,,
\qquad\qquad  z_{k+1}(t)~=~{1\over {1\over z_{k+1}(t_j)} - 2(t-t_j)} \,.\eeq
Since $z_{k-1}(t_j)<0$,~$ z_{k+1}(t_j)<0$, this means that as $t\to +\infty$ these slopes increase to zero.

\begin{figure}[ht]
\centerline{\hbox{\includegraphics[width=14cm]{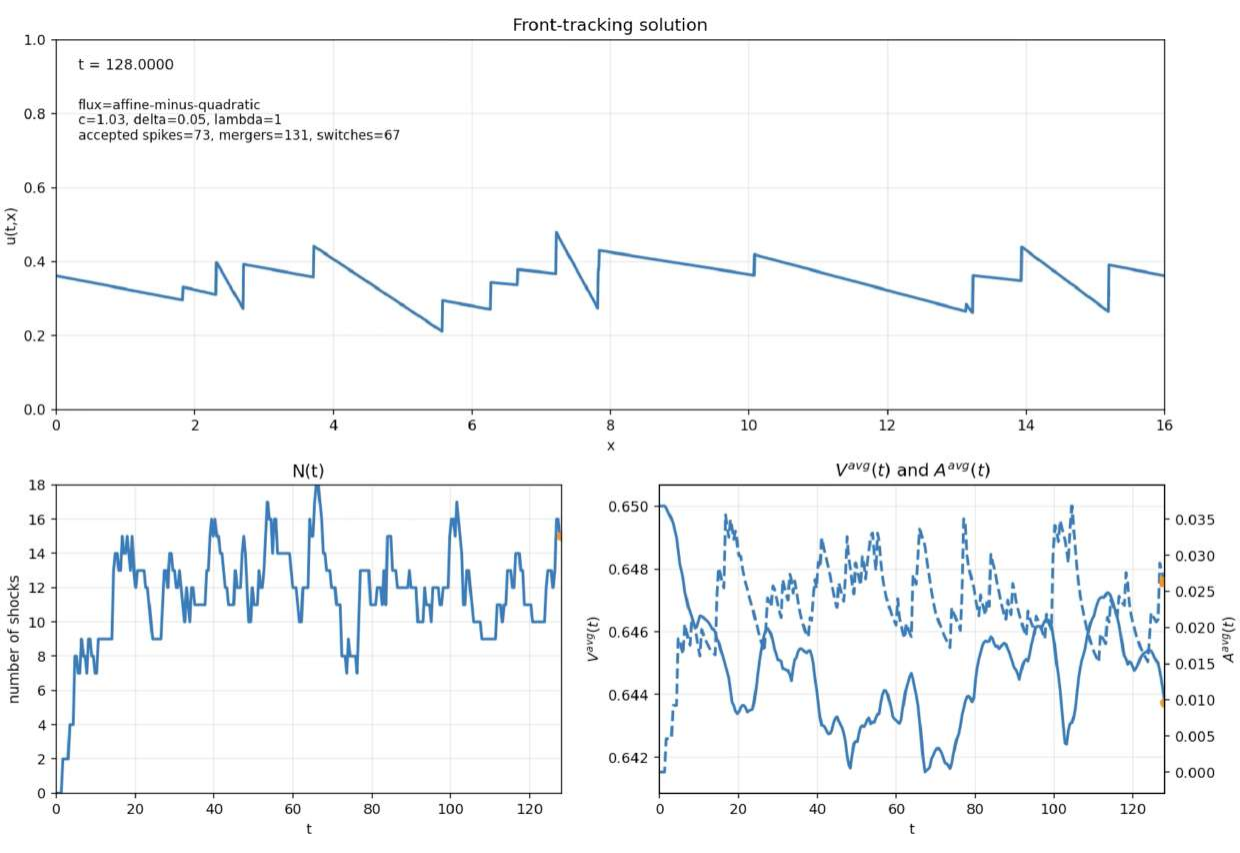}}}
\caption{\small Top: the profile of a random solution $u(\tau,\cdot)$ at a time $\tau>\!>1$.
Bottom left: the number of shocks $N(t)$, as a function of time.    Bottom right:
The average velocity of cars $V^{avg}(t)$ (solid line) and the average acceleration of cars
$A^{avg}(t)$ (dotted line).  Note that, when new spikes are formed, so that $N(t)$ increases, the average velocity decreases and the average acceleration increases. }
\label{f:tsnap}
\end{figure}

Note that, if  a new $N$-shaped spike is generated at $(t_i, X_i)$, then for $t>t_i$
the graph of $u(t,\cdot)$ will contain a segment with slope
\bel{dslope}u_x~=~{-1\over 2(t-t_i)}\,.\eeq
Indeed, this is the interval occupied by the centered rarefaction wave originating at $(t_i, X_i)$.
The formula (\ref{dslope}) can be used to obtain an explicit solution to (\ref{zf}).

The left and right states $u_k(t)$ and $u_{k+1}(t)$ are now computed by solving the appropriate system of ODEs corresponding to one of the four cases above, with coefficients depending on $t.$
In turn, 
recalling \eqref{RHC},  the corresponding shock position
$x_k(t)=x_{k+1}(t)$ is then obtained by solving the ODE
\bel{doxk}
\dot x_k(t)~=~
{\hbox{[flux to the right] - [flux to the left]}\over
u_{k+1}(t)-u_k(t)}\,,
\eeq
Here the flux on the right can be $f\bigl(u_{k+1}(t)\bigr)$ or $g(u_{k+1}(t)\bigr)$, and 
the flux on the left can be $f\bigl(u_{k}(t)\bigr)$ or $g(u_{k}(t)\bigr)$, in different cases. 
\v
Finally, the algorithm must be restarted at times when two shocks interact, and at random times where a new $N$-shaped spike is generated.
Each restarting requires:
\begi
\item Deleting  two nodes, at a time when two shocks merge into a single one.
\item Inserting four new nodes, at random times when a new spike arises.\endi
In both cases, all the nodes need to be relabeled.

\section{Concluding remarks}
\label{sec:10}
\setcounter{equation}{0}
In this paper we introduced a simple model, based on two flux functions, which generates a rich
stochastic dynamics.   By suitably adjusting the parameters,  some
features of actual traffic flow can be recovered.  

More general models could be considered. In particular:
\begi 
\item At (\ref{P}) the rate $\lambda$  at which new spikes are generated, instead of being a constant,  may depend on the density $u$.
\item The graphs of two functions $f$ and $g$ may intersect at some intermediate point $u^*\in \,]0,\alpha[\,$, 
say,
$$\left\{\bega{rl} f(u)<g(u)\quad &\hbox{if}~~0<u<u^*,\\[2mm]
g(u)<f(u)\quad &\hbox{if}~~u^*<u<\alpha.\enda\right.$$
\endi
A difficult open problem is to understand what happens if the flux $g$ is not a quadratic polynomial.
In this direction we expect that,
if the assumption (\ref{gprop}) is dropped, 
our previous analysis should remain valid provided that the equation (\ref{fgclaw}) is replaced
by
\bel{fgclaw2}
u_t + \Big[\theta(u_x) f(u) + \bigl(1-\theta(u_x) \bigr)g(u)\Big]_x=~0,\qquad \quad
\theta(s)~=~\left\{\bega{rl} 1\quad &\hbox{if}\quad s>-\delta(u),\cr
0\quad &\hbox{if}\quad s<-\delta(u),\enda\right.\eeq
\bel{du}\hbox{where}\quad \delta(u)=-{c\over  g''(u)}\quad \hbox{ for some constant}~ c>0.\eeq
Notice that this choice of $\delta(u)$  guarantees that the flux function will switch from $g$ to $f$ simultaneously at all points in the interior of an $N$-shaped spike.
Apart from the very special case (\ref{du}), it is not clear how to construct solutions to (\ref{fgclaw2}).

\v
{\bf Acknowledgments.} This research was partially supported by NSF with
grant  DMS-2306926, ``Regularity and approximation of solutions to conservation laws".
The authors wish to thank Yuri Suhov  for valuable discussions and suggestions, which were instrumental in establishing the ergodicity of the stochastic process.
They also wish to thank Rinaldo Colombo for his help in writing the Python code used in the numerical simulations, with the aid of chatGPT.

\v


\begin{thebibliography}{99}
\bibitem{ABS1}  D.~Amadori, A.~Bressan and W.~Shen,
Conservation laws with discontinuous gradient-dependent flux: the stable case. 
 {\it Math. Models Methods Appl. Sci.} {\bf 35} (2025), 1421--1469.


\bibitem{ABS2}  D.\,Amadori, A.\,Bressan and W.\,Shen,
Conservation laws with discontinuous gradient-dependent flux: the unstable case. 
{\it Comm. Math. Sci.} {\bf 24} (2026), 1971--1997.


\bibitem{APT}  A.\,Hayat, B.\,Piccoli and S.\,Truong, 
Dissipation of traffic jams using a single autonomous vehicle on a ring road.
{\it SIAM J. Appl. Math.} {\bf 83} (2023), 909--937.


\bibitem{Bbook} A.\,Bressan,
{\it Hyperbolic systems of conservation laws. The one dimensional Cauchy problem}. 
Oxford University Press, 2000.


\bibitem{BMPR} M.\,Briani, R.\,Manzo, B.\,Piccoli and L.\,Rarit\`a,
Estimation of  $NO_x$  and  $O_3$  reduction by dissipating traffic waves
{\it Netw. Heterog. Media} {\bf 19} (2024), 822--841.




\bibitem{CB} V.\,Capasso and D.\,Bakstein,
{\it An Introduction to Continuous-Time Stochastic Processes: Theory, Models, and Applications to Finance, Biology, and Medicine}. Modeling and Simulation in Science, Engineering and Technology,
4-th edition, 2021.


\bibitem{CPT} F.\,A.\,Chiarello, B.\,Piccoli and A.\,Tosin,
Multiscale control of generic second order traffic models by driver-assist vehicles.
{\it Multiscale Model. Simul. } {\bf 19} (2021), 589--611.




\bibitem{CDGP} M.\,T.\,Chiri, C.\,Denaro, X.\,Gong and B.\,Piccoli, 
Hybrid multi-population traffic flow model: optimal control for a mean-field limit.
{\it J. Hyperbolic Differ. Equat.} {\bf 23} (2026), 127--150.




\bibitem{CF19} A.\,Corli and H.\,Fan,
Hysteresis and stop-and-go waves in traffic flows.
{\it Math.\,Models Methods Appl. Sci.}, {\bf  29} (2019), 2637--2678.


\bibitem{CF23}
A.\,Corli and H.\,Fan, String stability in traffic flows. {\it Appl. Math. Comput.} {\bf 443} (2023), Paper No.\,127775, 24.


\bibitem{CF24}
A.\,Corli and H.\,Fan, The hysteretic Aw-Rascle-Zhang model. {\it Stud. Appl. Math.}, {\bf 153}, (2024), Paper No.\,e12769, 38.


\bibitem{CF25}
A.\,Corli and H.\,Fan, String stability for hysteretic flows. Preprint, 2026.


\bibitem{F24}
H.\,Fan, Conservation laws with hysteretic fluxes. {\it J.\,Differential Equations} 
{\bf 394} (2024), 1--30.


\bibitem{FS} H.\,Fan and W.\,Shen, Microscopic hysteretic traffic model and stop-and-go 
waves. {\it Discr. Contin. Dyn. Syst.} {\bf 45} (2025), 4490--4509.


\bibitem{GGLP} M.\,Garavello, P.\,Goatin, T.\,Liard and B.\,Piccoli, 
A multiscale model for traffic regulation via autonomous vehicles.
{\it J. Differential Equations} {\bf 269} (2020), 6088--6124.


%
%\bibitem{G} P.\,Goatin,
%The Aw–Rascle vehicular traffic flow model with phase transitions, 
%{\it Math. Computer Modeling}  {\bf 44} (2006), 287--303.


\bibitem{Seib}
M.\,R.\,Flynn, A.\,R.\,Kasimov, J.-C.\,Nave, R.\,R.\,Rosales and B.\,Seibold, 
Self-sustained nonlinear waves in traffic flow.
{\it Phys. Rev. E} (3) {\bf 79} (2009), 056113.


%\bibitem{GR92} T.\,Gimse and  N.\,H.\,Risebro.
%Solution of the Cauchy problem for a conservation law with discontinuous flux function.
%{\it SIAM J.\,Math.\,Anal.}, \textbf{23} (1992), 635--648.


\bibitem{HM} M.\,Hairer and J.\,C.\,Mattingly, Yet another look at Harris’ ergodic theorem for Markov chains. In:
{\it Seminar on Stochastic Analysis, Random Fields and Applications VI}.
R.\,C.\,Dalang, M.\,Dozzi,
and F.\.Russo, Editors.
Springer, Basel, 2011, pp.\,109--117. 
 
\bibitem{H} T.\,E.\,Harris, The existence of stationary measures for certain Markov processes. In {\it Proceedings of the Third Berkeley Symposium on Mathematical Statistics and Probability, 1954--1955, vol.\,II}, 113--124. University of California Press,  1956.
 
 

\bibitem{HR}
H.\,Holden and N.\,Risebro, 
{\it Front Tracking for Hyperbolic Conservation Laws.} Second Edition.  Springer-Verlag, Berlin, 2015.

 
\bibitem{Li} T.\,Li,
Nonlinear dynamics of traffic jams.
{\it Phys. D} {\bf  207} (2005), 41--51.
    
\bibitem{LW}
M.\,Lighthill and G.\,Whitham,  On kinematic waves. II. A theory of traffic flow on long crowded roads. 
{\it Proceedings of the Royal Society of London: Series A,}
{\bf 229} (1955), 317--345.




\bibitem{RRS} R.\,Ramadan, R.\,R.\,Rosales and B.\,Seibold,
         Structural properties of the stability of jamitons.  In:
{\it Mathematical descriptions of traffic flow: micro, macro and
              kinetic models}.
ICIAM 2019 SEMA SIMAI Springer Ser., {\bf 12},
 pp.\,35--62.




\bibitem{R}
P.\,I.\,Richards, Shock waves on the highway, 
{\it Oper. Res.} {\bf  4} 
(1956),  42-51.


\bibitem{XXX} R.\,E.\,Stern, S.\,Cui, M.\,L.\,Delle Monache, R.\,Bhadani, M.\,Bunting, 
M.\,Churchill, N.\,Hamilton, H.\,Pohlmann, F.\,Wu, B.\,Piccoli, B.\,Seibold, J.\,Sprinkle, 
and D.\,B.\,Work, Dissipation of stop-and-go waves via control of autonomous vehicles: Field experiments, {\it Trans. Res. Part C Emerg. Technol.}, {\bf 89}  (2018), 205--221. 




\end{thebibliography}
\end{document}